\DeclareMathOperator{\diam}{diam}
\DeclareMathOperator{\bas}{bas}
\DeclareMathOperator{\hor}{hor}
\DeclareMathOperator{\rank}{rank}
\DeclareMathOperator{\symrank}{symrank}
\DeclareMathOperator{\vol}{vol}
\DeclareMathOperator{\id}{Id}
\DeclareMathOperator{\ric}{Ric}
\DeclareMathOperator{\ind}{ind}
\DeclareMathOperator*{\spannn}{span}
\DeclareMathOperator{\sym}{S}
\DeclareMathOperator{\abel}{Abel}
\mathchardef\ordinarycolon\mathcode`\:
\begin{document}

\title{Introduction to Orbifolds}

\author{Francisco C.~Caramello Jr.}
\date{}

\theoremstyle{definition}
\newtheorem{example}{Example}[section]
\newtheorem{definition}[example]{Definition}
\newtheorem{remark}[example]{Remark}
\newtheorem{exercise}[example]{Exercise}
\theoremstyle{plain}
\newtheorem{proposition}[example]{Proposition}
\newtheorem{theorem}[example]{Theorem}
\newtheorem{lemma}[example]{Lemma}
\newtheorem{corollary}[example]{Corollary}
\newtheorem{claim}[example]{Claim}
\newtheorem{scholium}[example]{Scholium}
\newtheorem{thmx}{Theorem}
\renewcommand{\thethmx}{\Alph{thmx}} 
\newtheorem{corx}{Corollary}
\renewcommand{\thecorx}{\Alph{corx}} 

\newcommand{\dif}[0]{\mathrm{d}}
\newcommand{\od}[2]{\frac{d #1}{d #2}}
\newcommand{\pd}[2]{\frac{\partial #1}{\partial #2}}
\newcommand{\dcov}[2]{\frac{\nabla #1}{d #2}}
\newcommand{\proin}[2]{\left\langle #1, #2 \right\rangle}
\newcommand{\f}[0]{\mathcal{F}}
\newcommand{\g}[0]{\mathcal{G}}
\newcommand{\piorb}[0]{\pi_1^{\mathrm{orb}}}
\newcommand{\chiorb}[0]{\chi^{\mathrm{orb}}}
\newcommand{\dvol}[0]{d\!\vol}
\newcommand\blfootnote[1]{%
  \begingroup
  \renewcommand\thefootnote{}\footnote{#1}%
  \addtocounter{footnote}{-1}%
  \endgroup
}
\newcommand{\tl}[1]{\widetilde{#1}}
\newcommand{\mc}[1]{T_x\mathcal{#1}}

\begin{titlepage}
\begin{center}
\ \\
\vspace{200pt}

\textbf{\huge{Introduction to Orbifolds}}

\vspace{50pt}

\large{Francisco C. Caramello Jr.}\footnote{Address: Departamento de Matemática, Universidade Federal de Santa Catarina, R. Eng. Agr. Andrei Cristian Ferreira, 88040-900 Florianópolis -- SC, Brazil, \texttt{francisco.caramello@ufsc.br}\\
The author was supported by grant \#2018/14980-0, São Paulo Research Foundation (FAPESP).}
\vfill

\end{center}
\thispagestyle{empty}
\end{titlepage}

\pagenumbering{roman}\setcounter{page}{1}

\newpage
\thispagestyle{empty}
\begin{flushright}
\ \\
\vfill
\textit{``Manifolds are fantastic spaces. It’s a pity that there aren’t more of them.''}\\
--- nLab page on generalized smooth spaces.

\end{flushright}\ \\\ \\

\chapter*{Preface}
\addcontentsline{toc}{chapter}{Preface}

Orbifolds, first defined by I.~Satake in \cite{satake} as $V$-manifolds, are amongst the simplest generalizations of manifolds that include singularities. They are topological spaces locally modeled on quotients of $\mathbb{R}^n$ by a finite group action, and appear naturally in many areas of mathematics. Orbifolds also arise in physics as configuration spaces after one removes formal symmetries of a system, for example gauge transformations in Yang--Mills theory and coordinate transformations in general relativity \cite{emmrich}. There are many different ways to approach orbifolds, for example as Lie groupoids (see Remark \ref{remark: orbifolds as groupoids}), length spaces (see Remark \ref{Remark: orbifolds as metric spaces}), Deligne--Mumford stacks (see, e.g., \cite{lerman}), etc. Here we will adopt the more elementary, classical approach via local charts and atlases, following mostly \cite{adem}, \cite{boyer}, \cite{chenruan}, \cite{choi}, \cite{kleiner}, \cite{mrcun} and \cite{thurston}, which can be used for further reading on the subject.

Manifolds are very well-behaved spaces. While this is comfortable on the level of the objects, it forces the category of manifolds and smooth maps to have poor algebraic properties (e.g. it is not closed under limits, co-limits, quotients...), hence the pursuit of generalizations. Orbifolds arise in this context---together with the more general Chen spaces \cite{chen}, diffeological spaces \cite{iglesias}, differentiable staks \cite{behrend}, Frölicher spaces \cite{frolicher} and many others (see also \cite{stacey})---providing a category which behaves a little better at least with respect to quotients, while retaining close similarity with the realm and language of manifolds.

In Chapter \ref{section: definition and examples} we introduce the notion of orbifolds and give some examples, motivating it by considering quotients of group actions. Chapter \ref{section: fundamental group and coverings} is devoted to some algebraic topology, mainly concerning Thurston's generalization of the notion of fundamental group and covering maps to orbifolds. Chapter \ref{section: diff geometry} covers the basics of differential geometry and topology of orbifolds, such as smooth maps, bundles, integration, and some results on regular and equivariant De Rham cohomology of orbifolds. Finally, in Chapter \ref{section: riemannian orbifolds} we endow orbifolds with Riemannian metrics and see some generalizations of classical results from Riemannian geometry to the orbifold setting.

This document is an expanded version of the course notes for the mini-course ``Introduction to Orbifolds'', held on the \textit{Workshop on Submanifold Theory and Geometric Analysis} (WSTGA) at Federal University of São Carlos, Brazil, on August 05 – 09, 2019. Please send comments and corrections to \texttt{francisco.caramello@ufsc.br}.

\section*{Acknowledgments}
\addcontentsline{toc}{section}{Acknowledgments}

I am grateful to the organizing and scientific committees of WSTGA, specially Prof. Guillermo Antonio Lobos Villagra, for the invitation to lecture the aforementioned mini-course and for the efforts regarding the publication of these notes. I also thank Prof. Dirk Töben and Prof. Marcos Alexandrino for the support, encouragement and suggestions for the presentation, and Dr. Cristiano Augusto de Souza who studied orbifold theory with me at some point of our doctorates. During the preparation of these notes I was supported by grant \#2018/14980-0 of the São Paulo Research Foundation (FAPESP), for which I am also thankful.
\ \\

\noindent São Paulo - SP, Brazil, October 2019.

\begin{flushright}
Francisco C. Caramello Jr.\\
franciscocaramello.wordpress.com
\end{flushright}

\tableofcontents

\chapter{Definition and examples}\label{section: definition and examples}\pagenumbering{arabic}\setcounter{page}{6}

In this chapter we introduce the notion of orbifolds and give some examples. Since orbifolds are spaces locally modeled on quotients of actions of finite groups, it will be convenient for us to begin by recalling some basics of group actions. Furthermore, the quotient spaces of examples of the group actions that we will see here will be our first examples of orbifolds, the so called \textit{good} orbifolds, that is, orbifolds which are global quotients of properly discontinuous actions and which form a very relevant class.

\section{Group actions}\label{section: group actions}

An \textit{action} of a group $G$ on a set $X$ is a map $\mu:G\times X\to X$ with $\mu(e,x)=x$ and $\mu(g,\mu(h,x))=\mu(gh,x)$, for all $x\in X$ and $g\in G$. It is also usual to denote $\mu$ by juxtaposition, i.e. $\mu(g,x)=gx$, when there is no risk of confusion. The \textit{isotropy subgroup} of $x\in X$ is $G_x:=\{g\in G\ |\ gx=x\}$. One always has $G_{gx}=gG_xg^{-1}$. Notice that a $G$-action on $X$ is equivalent to a homomorphism $G\ni g \mapsto\mu^g\in\mathrm{Aut}(X)$, where $\mu^g:X\ni x\mapsto \mu(g,x)\in X$. The action is \textit{effective} when its kernel is trivial, which is equivalent to $\bigcap_{x\in X} G_x=\{e\}$. When $G_x=\{e\}$ for all $x$, the action is called \textit{free}. A map $\phi:X\to Y$, where $G$ acts on both $X$ and $Y$, is said to be \textit{equivariant} if $\phi(gx)=g(\phi(x))$ for all $x\in X$. In this case it is easy to check that $G_x<G_{\phi(x)}$.

\begin{example}[Rotations]\label{example: continuous rotations}
The group $\mathrm{SO}(2)$ acts effectively on $\mathbb{R}^2$ by rotations. Under the identifications $\mathrm{SO}(2)\cong\mathbb{S}^1<\mathbb{C}$ and $\mathbb{R}^2\cong\mathbb{C}$ this action is given by the usual product in $\mathbb{C}$. All points have trivial isotropy except the origin, for which $\mathbb{S}^1_0=\mathbb{S}^1$.
\end{example}

The orbit of $x$ is the subset $Gx:=\{gx\in X\ |\ g\in G\}$. Orbits give a partition of $X$, and the set of equivalence classes, the \textit{orbit space}, is denoted $X/G$. When $X$ is a topological space there is a natural topology on $X/G$, the quotient topology. Now suppose we are in the smooth category, i.e., $X=M$ is a manifold, $G$ is a Lie group and the action is a smooth map. Then, analogously, we would like to transfer the smooth structure to the quotient $M/G$ but, unfortunately, $M/G$ fails to be a manifold, unless the action is very specific (see below). We will be interested in generalizing manifolds to a class that includes more quotients $M/G$, but before we proceed, let us see some examples (see also \cite[Section 13.1]{thurston}).

\begin{example}[Torus]\label{example: torus}
Consider $M=\mathbb{R}^2$ and $G=\mathbb{Z}^2$ acting on $M$ by $(p,q)(x,y)=(x+p,y+q)$. Notice that this action is free. It is easy to see that $M/G$ is a torus.
\end{example}

\begin{example}[Mirror]
Consider the action of $\mathbb{Z}_2$ on $\mathbb{R}^3$ given by reflection along the plane $x=0$. Clearly, points with $x\neq 0$ have trivial isotropy, while points with $x=0$ have $\mathbb{Z}_2$ isotropy. The quotient is the positive hyperplane
$$\mathbb{H}^+:=\{(x,y,z)\in\mathbb{R}^3\ |\ x\geq 0\}= \mathbb{R}^3/\mathbb{Z}_2.$$
Of course, the physical interpretation is that an observer in $\mathbb{R}^3/\mathbb{Z}_2$ would perceive an ambient with a mirror reflection.
\end{example}

\begin{figure}
\centering{
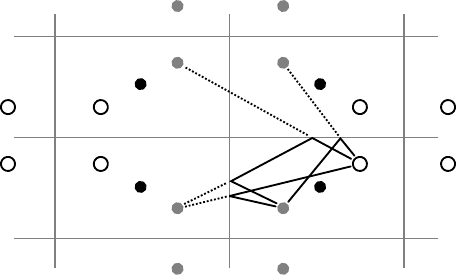}
\caption{Billiards table.}
\label{billiards}
\end{figure}

\begin{example}[Barber shop]
Again with $M=\mathbb{R}^3$, consider the group $G$ generated by reflections along the planes $x=0$ and $x=1$, i.e., $G\cong\mathbb{Z}_2*\mathbb{Z}_2$. The quotient space is $M/G=\{(x,y,z)\ |\ 0\leq x\leq1\}$. Physically this is analogous to two mirrors in parallel walls, as common in barber shops.
\end{example}

\begin{example}[Billiards table]
Consider four lines on $\mathbb{R}^2$ forming a rectangle $R$ and let $G$ be the group of isometries of $\mathbb{R}^2$ generated by reflections along those lines. Then $G$ is isomorphic to $(\mathbb{Z}_2*\mathbb{Z}_2)\times(\mathbb{Z}_2*\mathbb{Z}_2)$ and the quotient is $R$. The physical model is a billiard table (see Figure \ref{billiards}).
\end{example}

\begin{example}[Cone]\label{example: cones}
Consider the action of $\mathbb{Z}_p$ on $\mathbb{R}^2$ generated by a rotation of angle $2\pi/p$ around the origin (we also say that this is a \textit{rotation of order $p$}). The quotient is a cone with cone angle $2\pi/p$. Figure \ref{cone} shows the case $p=3$.

\begin{figure}
\centering{
\begingroup%
  \makeatletter%
  \providecommand\color[2][]{%
    \errmessage{(Inkscape) Color is used for the text in Inkscape, but the package 'color.sty' is not loaded}%
    \renewcommand\color[2][]{}%
  }%
  \providecommand\transparent[1]{%
    \errmessage{(Inkscape) Transparency is used (non-zero) for the text in Inkscape, but the package 'transparent.sty' is not loaded}%
    \renewcommand\transparent[1]{}%
  }%
  \providecommand\rotatebox[2]{#2}%
  \newcommand*\fsize{\dimexpr\f@size pt\relax}%
  \newcommand*\lineheight[1]{\fontsize{\fsize}{#1\fsize}\selectfont}%
  \ifx\svgwidth\undefined%
    \setlength{\unitlength}{226.3995115bp}%
    \ifx\svgscale\undefined%
      \relax%
    \else%
      \setlength{\unitlength}{\unitlength * \real{\svgscale}}%
    \fi%
  \else%
    \setlength{\unitlength}{\svgwidth}%
  \fi%
  \global\let\svgwidth\undefined%
  \global\let\svgscale\undefined%
  \makeatother%
  \begin{picture}(1,0.34803595)%
    \lineheight{1}%
    \setlength\tabcolsep{0pt}%
    \put(0,0){\includegraphics[width=\unitlength,page=1]{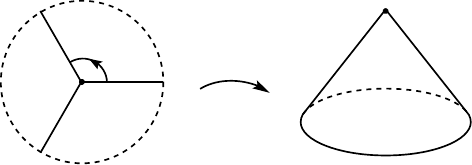}}%
    \put(0.20632742,0.25353391){\color[rgb]{0,0,0}\makebox(0,0)[lt]{\lineheight{0}\smash{\begin{tabular}[t]{l}$\frac{2\pi}{3}$\end{tabular}}}}%
  \end{picture}%
\endgroup%
}
\caption{A cone with angle $2\pi/3$.}
\label{cone}
\end{figure}
\end{example}

\begin{example}[Good football]
Similarly, on the sphere $\mathbb{S}^2$, a rotation of order $p$ around a chosen axis $R$ (say the $z$-axis) induces a $\mathbb{Z}_p$-action. The quotient space is topologically a sphere, but the poles (the points in $\mathbb{S}^2\cap R$, with non-trivial isotropy) become singular points, which locally look like the cones. In fact, the action can be lifted through the exponential map from each pole to precisely the case of Example \ref{example: cones}. We see this with more details in Example \ref{example football}, where we generalize this object.
\end{example}

\begin{example}[Pillow case]\label{example: pillow}
On $\mathbb{R}^2$, consider the discrete group $G$ generated by rotations of order $2$ around the integer lattice $\mathbb{Z}^2\subset\mathbb{R}^2$. To visualize $\mathbb{R}^2/G$, consider the square $Q=[-1/2,3/2]\times[0,1]$, so that there are two lattice points in the bottom side and other two on the top side. Then glue the vertical sides of $Q$ by the composition of the two non trivial rotations on the bottom side (or on the top side, equivalently), obtaining a cylinder. It still remains to ``zipper'' the top and bottom circles of the cylinder by the rotations, yielding an sphere with four cone points of order $2$, called a \textit{pillow case} (see Figure \ref{pillowcase}).

\begin{figure}
\centering{
\begingroup%
  \makeatletter%
  \providecommand\color[2][]{%
    \errmessage{(Inkscape) Color is used for the text in Inkscape, but the package 'color.sty' is not loaded}%
    \renewcommand\color[2][]{}%
  }%
  \providecommand\transparent[1]{%
    \errmessage{(Inkscape) Transparency is used (non-zero) for the text in Inkscape, but the package 'transparent.sty' is not loaded}%
    \renewcommand\transparent[1]{}%
  }%
  \providecommand\rotatebox[2]{#2}%
  \newcommand*\fsize{\dimexpr\f@size pt\relax}%
  \newcommand*\lineheight[1]{\fontsize{\fsize}{#1\fsize}\selectfont}%
  \ifx\svgwidth\undefined%
    \setlength{\unitlength}{331.14669817bp}%
    \ifx\svgscale\undefined%
      \relax%
    \else%
      \setlength{\unitlength}{\unitlength * \real{\svgscale}}%
    \fi%
  \else%
    \setlength{\unitlength}{\svgwidth}%
  \fi%
  \global\let\svgwidth\undefined%
  \global\let\svgscale\undefined%
  \makeatother%
  \begin{picture}(1,0.47551292)%
    \lineheight{1}%
    \setlength\tabcolsep{0pt}%
    \put(0,0){\includegraphics[width=\unitlength,page=1]{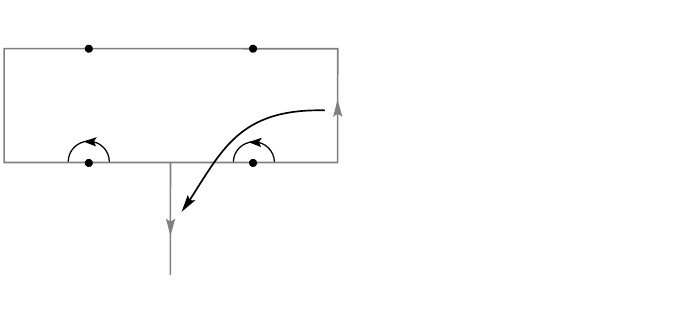}}%
    \put(0.12180882,0.20547377){\color[rgb]{0,0,0}\makebox(0,0)[lt]{\lineheight{1.25}\smash{\begin{tabular}[t]{l}$(0,0)$\end{tabular}}}}%
    \put(0,0){\includegraphics[width=\unitlength,page=2]{pillow.pdf}}%
  \end{picture}%
\endgroup%
}
\caption{The pillowcase.}
\label{pillowcase}
\end{figure}

This same space can be visualized as a quotient of the torus $\mathbb{T}^2$. Consider $\mathbb{T}^2$ as a torus of revolution in $\mathbb{R}^3$, obtained by rotating a circle in the $xz$-plane centered at $(1,0,0)$ around the $z$-axis. Then rotation by $\pi$ around the $y$-axis is a symmetry of $\mathbb{T}^2$. It is easy to see that the quotient $\mathbb{T}^2/\mathbb{Z}_2$ is again a pillow case.
\end{example}

Let us now recall some properties of smooth Lie group actions on manifolds. For every $x$, the action induces an injective immersion $G/G_x\to Gx$. Moreover, when the action is \textit{proper}, i.e., when the map
$$G\times M\ni (g,x)\longmapsto (g(x),x)\in M\times M$$
is proper (e.g., when $G$ is compact) $G/G_x\cong Gx$ is a diffeomorphism (see e.g. \cite[Proposition 3.41]{alex}). The quotient of a free proper action on a manifold is also a manifold. In fact $M\to M/G$ naturally becomes a principal $G$-bundle in this case \cite[Theorem 3.34]{alex}. A proper action by a discrete group is called \textit{properly discontinuous}. Notice that we do not assume that properly discontinuous actions are free, as it is common in some contexts. In fact, an action is properly discontinuous, as defined here, if and only if for every compact $K\subset M$ the set $\{g\in G\ |\ K\cap g K\neq\emptyset\}$ is finite. Any action of a \emph{finite} group $G$ on $M$ is automatically properly discontinuous.

We can now proceed to the abstract definition orbifolds as mathematical objects that accommodate quotients $M/G$. We will go a step further and consider then to be only \emph{locally} modeled by quotients by finite group actions, in analogy to manifolds that are locally Euclidean. In fact, orbifolds which are global quotients by properly discontinuous actions are usually called \textit{good}, and those which are quotients by finite groups are \textit{very good} (see also Section \ref{section: quotient orbifolds}).

\section{Orbifolds}

Let $X$ be a topological space and fix $n\in\mathbb{N}$. An \textit{orbifold chart} $(\widetilde{U},H,\phi)$ of dimension $n$ for an open set $U\subset X$ consists of a connected open subset $\widetilde{U}\subset\mathbb{R}^n$, a finite group $H$ acting smoothly and effectively\footnote{In some appearances of orbifolds it is more natural and useful to consider possibly non-effective actions of $H$ on $\widetilde{U}$ (see, for example, \cite{chenruan}). In spite of this we will avoid non-effective orbifolds to keep the presentation simple.} on $\widetilde{U}$ and a continuous $H$-invariant map $\phi:\widetilde{U}\to X$ that induces a homeomorphism between $\widetilde{U}/H$ and $U$ (see Figure \ref{chart}).

\begin{figure}
\centering{
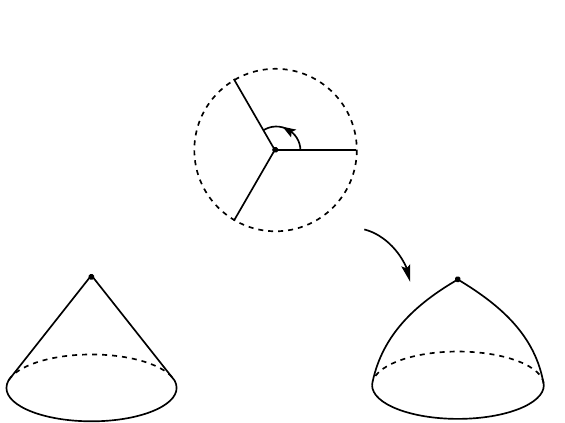}
\caption{An orbifold chart.}
\label{chart}
\end{figure}

An \textit{embedding} $\lambda:(\widetilde{U}_1,H_1,\phi_1)\hookrightarrow (\widetilde{U}_2,H_2,\phi_2)$ between two orbifold charts is a smooth embedding $\lambda:\widetilde{U}_1\hookrightarrow\widetilde{U}_2$ that satisfies $\phi_2\circ\lambda=\phi_1$. Note that for every chart $(\widetilde{U},H,\phi)$, each $h$ in the \textit{chart group} $H$ is, in particular, an embedding $(\widetilde{U},H,\phi\circ h)\hookrightarrow (\widetilde{U},H,\phi)$.

An \textit{orbifold atlas} for $X$ is a collection $\mathcal{A}=\{(\widetilde{U}_i,H_i,\phi_i)\}_{i\in I}$ of orbifold charts that covers $X$ and are locally compatible in the following sense: for any two charts $(\widetilde{U}_i,H_i,\phi_i)$, $i=1,2$, and $x\in U_1\cap U_2$, there is an open neighborhood $U_3\subset U_1\cap U_2$ containing $x$ and an orbifold chart $(\widetilde{U}_3,H_3,\phi_3)$ for $U_3$ that admits embeddings in $(\widetilde{U}_i,H_i,\phi_i)$, $i=1,2$. We say that an atlas $\mathcal{A}$ \textit{refines} an atlas $\mathcal{B}$ when every chart in $\mathcal{A}$ admits an embedding in some chart in $\mathcal{B}$. Two atlases are \textit{equivalent} if they have a common refinement. As in the manifold case, an orbifold atlas is always contained in a unique maximal one and two orbifold atlases are equivalent if, and only if, they are contained in the same maximal one.

An $n$-dimensional \textit{smooth orbifold} $\mathcal{O}$ consists of a Hausdorff paracompact topological space $|\mathcal{O}|$ together with an \textit{orbifold structure}, that is, an equivalence class $[\mathcal{A}]$ of $n$-dimensional orbifold atlases for $|\mathcal{O}|$. We will say that an orbifold chart is a \textit{chart of $\mathcal{O}$} when it is an element of some atlas in $[\mathcal{A}]$.

\begin{example}[Manifolds are orbifolds]
Observe that if the groups $H_i$ are all trivial for some atlas in $[\mathcal{A}]$, then $\mathcal{O}$ is locally Euclidean and, therefore, a manifold.
\end{example}

We will see below, in Section \ref{section: quotient orbifolds}, that quotients of almost free\footnote{Recall that a $G$-action is \textit{almost free} when $G_x$ is finite, for all $x$.} proper Lie group actions are orbifolds, in particular all examples in Section \ref{section: group actions} are orbifolds. Somewhat in the other direction, Cartesian products provide new orbifolds from old ones.

\begin{exercise}[Cartesian products]
Let $\mathcal{O}$ and $\mathcal{P}$ be smooth orbifolds. Prove that $|\mathcal{O}|\times|\mathcal{P}|$ have a natural orbifold structure given by products of charts $(\widetilde{U}\times\widetilde{V},H\times K,\phi\times\psi)$.
\end{exercise}

\begin{example}[An orbifold whose underlying space is not a topological manifold]
The quotient $\mathbb{R}^3/\mathbb{Z}_2$, where $\mathbb{Z}_2$ acts by $1\mapsto-\mathrm{id}$, is an open cone over $\mathbb{RP}^2$, hence contractible. If it where a manifold, then removing a point should not change its fundamental group. But removing the cone vertex yields a space homotopy equivalent to $\mathbb{RP}^2$, whose fundamental group is not trivial.
\end{example}

Below we list some technical consequences of the definition.

\begin{enumerate}[(i)]
\item \label{item:induced compatible charts} If $(\widetilde{U},H,\phi)$ is a chart for $U$ and $U'\subset U$ is connected, then it is easy to prove that $(\widetilde{U}',H',\phi')$ is a compatible chart for $U'$, where $\widetilde{U}'$ is a connected component of $\phi^{-1}(U')$, $H':=H_{\widetilde{U}'}$ is the subgroup that preserves $\widetilde{U}'$ and $\phi':=\phi|_{\widetilde{U}'}$.

\item Complementing item (\ref{item:induced compatible charts}), if $(\widetilde{U}_i,H_i,\phi_i)$, $i=1,2$, are compatible charts, $\widetilde{U}_1$ is simply-connected and $\phi_1(\widetilde{U}_1)\subset\phi_2(\widetilde{U}_2)$, then there is an embedding
$$\lambda:(\widetilde{U}_1,H_1,\phi_1)\lhook\joinrel\longrightarrow(\widetilde{U}_2,H_2,\phi_2).$$

\item \label{item:property two embeddings} For two embeddings $\lambda,\lambda':(\widetilde{U}_1,H_1,\phi_1)\hookrightarrow (\widetilde{U}_2,H_2,\phi_2)$ there exists a unique $h\in H_2$ such that $\lambda'=h\circ\lambda$ (the proof is not trivial, see \cite{pronk}, Proposition A.1).

\item As a consequence of item (\ref{item:property two embeddings}), any embedding $\lambda:(\widetilde{U}_1,H_1,\phi_1)\hookrightarrow (\widetilde{U}_2,H_2,\phi_2)$ induces a monomorphism $\overline{\lambda}:H_1\to H_2$. In fact, since each $g\in H_1$ is an embedding of $(\widetilde{U}_1,H_1,\phi_1)$ onto itself, for the embeddings $\lambda$ and $\lambda\circ g$ there exists a unique $h\in H_2$ with $\lambda\circ g=h\circ\lambda$. Then we can define $\overline{\lambda}(g):=h$.

\item An orbifold chart $(\widetilde{U},H,\phi)$ is a \textit{linear chart} when $H<\mathrm{O}(n)$ (acting linearly). One can always obtain an atlas consisting only of linear charts, as follows. Let $(\widetilde{U},H,\phi)$ be any chart. Since $H$ is finite, we can choose an $H$-invariant Riemannian metric on $\widetilde{U}$. Then for $\tilde{x}\in \widetilde{U}$, the exponential map gives a diffeomorphism between an open neighborhood $\widetilde{U}_{\tilde{x}}$ of the origin in $T_{\tilde{x}}\widetilde{U}$ and an $H_{\tilde{x}}$-invariant neighborhood of $\tilde{x}$. Since $H$ acts by isometries, $\exp$ is $H_{\tilde{x}}$-equivariant with respect to the action of $H_{\tilde{x}}$ on $T_{\tilde{x}}$ by differentials of its action on $\widetilde{U}$. This action is isometric, hence $H_{\tilde{x}}<\mathrm{O}(T_{\tilde{x}}\widetilde{U})$. So this gives rise to a compatible linear chart $(\widetilde{U}_{\tilde{x}},H_{\tilde{x}},\phi\circ\exp_{\tilde{x}})$ for a neighborhood of $\tilde{x}$. Doing this for each point on each chart of an atlas we obtain a compatible atlas consisting of linear charts only.
\end{enumerate}

\section{Local groups and the canonical stratification}\label{section: canonical stratification}

Let $x\in|\mathcal{O}|$ and consider a chart $(\widetilde{U},H,\phi)$ with $x=\phi(\tilde{x})\in U$. The \textit{local group} $\Gamma_x$ at $x$ is the isomorphism class\footnote{We will denote both the isomorphism class and a representative of it by $\Gamma_x$, when the meaning is clear from the context.} of the isotropy subgroup $H_{\tilde{x}} <H$. It is independent of both the chart and the lift $\tilde{x}$ (see \cite{adem}, p. 4), and for every $x\in|\mathcal{O}|$ we can always find a compatible chart $(\widetilde{U},\Gamma_x,\phi)$ \textit{around} $x$, that is, such that $\phi^{-1}(x)$ consists of a single point $\tilde{x}$. We denote by $\Sigma_\Gamma$ the subset of $|\mathcal{O}|$ of the points with local group $\Gamma$. The decomposition
$$|\mathcal{O}|=\bigsqcup_\alpha \Sigma_\alpha,$$
where each $\Sigma_\alpha$ is a connected component of some $\Sigma_\Gamma$ called a \textit{stratum}, is the \textit{canonical stratification} of $\mathcal{O}$. Each $\Sigma_\alpha$ is a manifold without boundary (see \cite{dragomir}, p. 74 ff.). The \textit{regular locus} $\Sigma_{\{e\}}$ of \textit{regular points} is an open manifold, which will also be denoted by $\mathcal{O}_\mathrm{reg}$. Since the local groups act effectivelly, $\mathcal{O}_\mathrm{reg}$ is also dense in $|\mathcal{O}|$. When $\mathcal{O}$ is connected $\mathcal{O}_\mathrm{reg}$ is also connected, hence a stratum. The subset $\mathcal{O}_{\mathrm{sing}}:=|\mathcal{O}|\setminus\mathcal{O}_\mathrm{reg}$ of \textit{singular points} is a closed subset of $|\mathcal{O}|$ with empty interior, called the \textit{singular locus} of $\mathcal{O}$. Within $\mathcal{O}_{\mathrm{sing}}$ there are two subsets which are usually useful to distinguish. The union of all strata of codimension $1$ is the \textit{mirror locus} of $\mathcal{O}$, denoted $\mathcal{O}_{\mathrm{mirr}}$. A \textit{mirror point} $x\in\mathcal{O}_{\mathrm{mirr}}$ have $\Gamma_x=\mathbb{Z}_2$, whose linearized action is by a reflection. At the other extreme, the union of all strata of minimal dimension $\mathcal{O}_{\mathrm{min}}$ is the \textit{minimal locus} of $\mathcal{O}$.

\begin{example}[Footballs and teardrops]\label{example football}
On the sphere $\mathbb{S}^2$, consider normal geodesic balls $B_i$, for $i=1,2$, centered at the north and the south poles, $N$ and $S$, respectively, such that $\mathbb{S}^2=B_1\cup B_2$. So each ball $\widetilde{B}_i:=B_R(0)\subset\mathbb{R}^2$, with $\pi/2<R<\pi$, is mapped diffeomorphically over $B_i$ by the exponential map (with respect to the usual round metric on $\mathbb{S}^2$). We use polar coordinates $(r,\theta)$, with $0\leq r< R$ and $0\leq\theta<2\pi$, on $B_R(0)$.

Let $p_i\in \mathbb{N}$ and consider the orbifold chart $(\widetilde{B}_i,\mathbb{Z}_{p_i},\phi_i)$, where $\mathbb{Z}_{p_i}$ acts on $\widetilde{B}_i$ by a rotation of order $p_i$ and $\phi_i:\widetilde{B}_i\to B_i$ maps $(r,\theta)$ to the point with geodesic coordinates $(r,p_i\theta)$. The map $\phi_i^{-1}\circ \phi_j$, on the annulus $\pi-R<r<R$, is given by
$$(r,\theta)\longmapsto \left(\pi-r,\frac{p_j}{p_i}\theta\right).$$
These are local diffeomorphisms which commute with the charts. This is sufficient to conclude that the charts are compatible (see Section \ref{subsection: associated pseudogroups}), hence we obtain an orbifold structure on $\mathbb{S}^2$, called the \textit{$(p_1,p_2)$-football}. For $p_i\neq 1$ the singular locus is, of course, $\{N,S\}$. In the special case $p_2=1$ the south pole becomes a regular point, and the resulting orbifold is called the \textit{$p_1$-teardrop}. The $(1,1)$-football is just the regular sphere.

\begin{figure}
\centering{
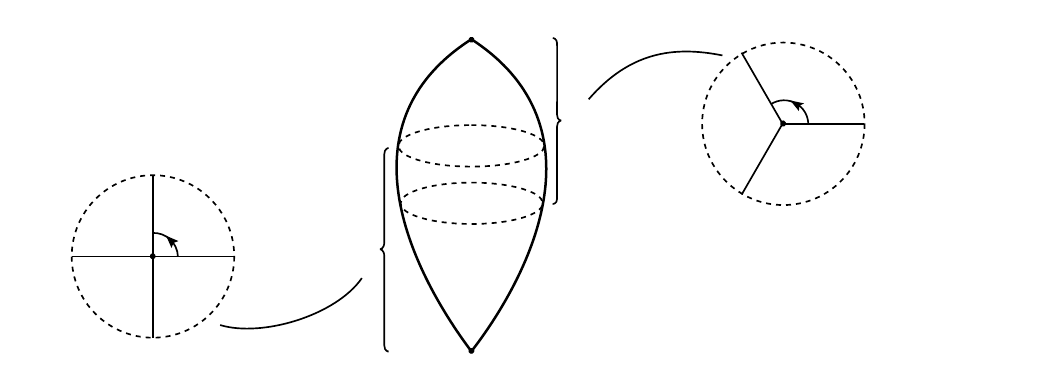}
\caption{The $(3,4)$-football}
\label{figfootball}
\end{figure}
\end{example}

We say that $\mathcal{O}$ is \textit{locally orientable} if there is an atlas $\mathcal{B}=\{(\widetilde{U}_i,H_i,\phi_i)\}\in[\mathcal{A}]$ such that each $H_i$ acts by orientation-preserving diffeomorphisms of $\widetilde{U}_i$. In particular, in this case we can suppose all charts satisfy $H_i<\mathrm{SO}(n)$. If we can choose an orientation for each $\widetilde{U}_i$ that makes every embedding between charts of $\mathcal{B}$ orientation-preserving, then $\mathcal{O}$ is \textit{orientable}. Of course, with such orientations chosen, $(\mathcal{O},\mathcal{B})$ is an \textit{oriented} orbifold.

\begin{exercise}[Singular locus of locally orientable orbifolds]
Prove that for a locally orientable orbifold all singular strata have codimension at least $2$; that is, $\mathcal{O}_{\mathrm{mirr}}=\emptyset$.
\end{exercise}

Orbifolds with boundary are defined similarly to the manifold case, by requiring the sets $\widetilde{U}$ in the charts to be open subsets of $[0,\infty)\times\mathbb{R}^{n-1}$. In order to keep the presentation simple we will avoid working with orbifolds with boundary, but we mention that the majority of the results presented here are also valid for them (even for orbifolds with corners). It is worth noting that one can have $\partial\mathcal{O}=\emptyset$ while $|\mathcal{O}|$ is homeomorphic to a topological manifold with non-empty boundary. In fact, we have the following.

\begin{example}[Silvering]\label{example: silvering}
If $M$ is a manifold with boundary we can give an orbifold structure (without boundary) $\mathcal{M}$ to $M$ so that $\partial M$ becomes a mirror. Any point $x\in\partial M$ has a neighborhood modeled on $\mathbb{R}^n/\mathbb{Z}_2$, where the action of $\mathbb{Z}_2$ is generated by reflection along the hyperplane that models $\partial M$. Then of course $\mathcal{M}_{\mathrm{sing}}=\mathcal{M}_{\mathrm{mirr}}=\partial M$, and $M=|\mathcal{M}|$ (as topological spaces). The smooth structures on $M$ and $\mathcal{M}$, however, are different (see Example \ref{example: silvering changes smooth structure}).
\end{example}

\section{Smooth maps}\label{section: smooth maps between orbifolds}

There are several different notions of smooth maps between orbifolds. They were first introduced in \cite{satake} in the most intuitive way, but it was later discovered that the concept needed some refinements in order to allow some usual constructions, for instance coherently pulling (orbi)bundles back by smooth maps. To overcome this, more subtle notions of morphisms between orbifolds were introduced. The \textit{strong maps} of \cite{pronk}, for example, match the definition of generalized groupoid homomorphisms when the orbifolds are seen as Lie groupoids (see Remark \ref{remark: orbifolds as groupoids}). There is also the notion of \textit{good map} of \cite{chenruan} (which is actually equivalent to that of strong map, see \cite{lupercio}, Proposition 5.1.7). In \cite{borzellino2} the authors present other four different notions of smooth maps between orbifolds and study their relations. Here we will follow \cite{kleiner} (whose definition coincides with that of \textit{reduced orbifold map} of \cite{borzellino2}), for it refines Satake's definition by handling the algebraic information on the singularities more carefully whilst retaining the classical differential geometric flavor (and without getting too technical\footnote{The price is that it does not solve the issue with pullbacks}).

Let $\mathcal{O}$ and $\mathcal{P}$ be orbifolds. A \textit{smooth map} $f:\mathcal{O}\to\mathcal{P}$ consists of a continuous map $|f|:|\mathcal{O}|\to|\mathcal{P}|$ which admits a \textit{smooth local lift} at each $x\in |\mathcal{O}|$, that is, there are:
\begin{enumerate}[(i)]
\item charts $(\widetilde{U},\Gamma_x,\phi)$ and $(\widetilde{V},\Gamma_{|f|(x)},\psi)$ around $x$ and $|f|(x)$, respectively, such that $|f|(U)\subset V$,
\item a homomorphism $\overline{f}_x:\Gamma_x\to \Gamma_{|f|(x)}$,
\item a smooth $\overline{f}_x$-equivariant\footnote{That is, satisfying $\widetilde{f}_x(g\tilde{y})=\overline{f}_x(g)\widetilde{f}_x(\tilde{y})$ for each $g\in\Gamma_x$ and $\tilde{y}\in\widetilde{U}$.} map $\widetilde{f}_x:\widetilde{U}\to\widetilde{V}$ such that
$$\xymatrix{
\widetilde{U} \ar[r]^{\widetilde{f}_x} \ar[d]_{\phi}& \widetilde{V} \ar[d]^{\psi}\\
U \ar[r]_{|f|} & V}$$
commutes (see Figure \ref{orbifoldmap}).
\end{enumerate}

\begin{figure}
\centering{
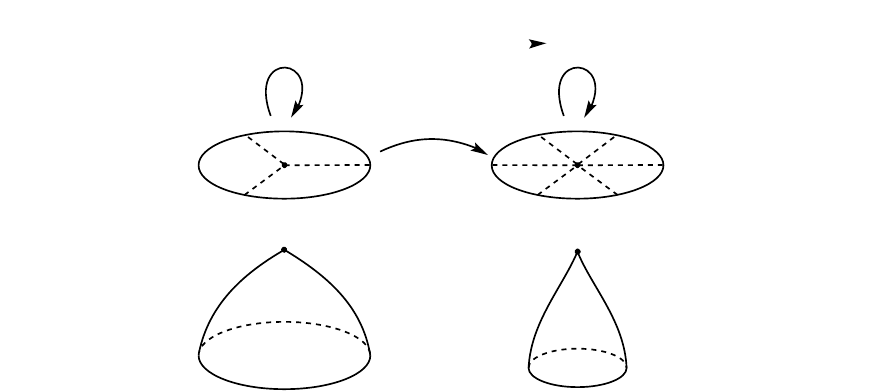}
\caption{A local lift.}
\label{orbifoldmap}
\end{figure}

A smooth map $f:\mathcal{O}\to\mathcal{P}$ is a \textit{diffeomorphism} if it admits a smooth inverse. In this case we clearly have $\Gamma_x\cong \Gamma_{|f|(x)}$ for all $x\in|\mathcal{O}|$, that is, diffeomorphisms must preserve the orbifold stratification.

\begin{exercise}
Show that the class of smooth orbifolds with smooth maps as morphisms form a category, that is, the identity map $\mathrm{id}:\mathcal{O}\to\mathcal{O}$ is smooth, for any $\mathcal{O}$, and the composition $g\circ f$ of any two smooth maps $f:\mathcal{O}\to\mathcal{P}$ and $g:\mathcal{P}\to\mathcal{Q}$ is smooth.
\end{exercise}

The space of $C^\infty$ maps $f:\mathcal{O}\to \mathbb{R}$ is denoted $C^\infty(\mathcal{O})$.

\begin{example}\label{example: silvering changes smooth structure}
Consider $M=[0,\infty)$ as a manifold with boundary and the corresponding orbifold $\mathcal{M}=\mathbb{R}/\mathbb{Z}_2$, where $\mathbb{Z}_2$ acts by reflection through $0$, obtained by silvering $\partial M$ (see Example \ref{example: silvering}). Then, by definition, $f\in C^\infty(M)$ if, and only if, it admits a smooth extension to $(-\varepsilon,\infty)$; whereas $f\in C^\infty(\mathcal{M})$ if, and only if, it admits a smooth lift\footnote{Notice that $\overline{f}$ is trivial (since the codomain is a manifold), hence equivariance becomes invariance for $\tilde{f}$.} $\tilde{f}\in C^\infty(\mathbb{R})^{\mathbb{Z}_2}$, i.e., it extends to a smooth even function. In particular, all odd derivatives $\tilde{f}^{(2k+1)}(0)$ must vanish.
\end{example}

Two different smooth local lifts at $x\in|\mathcal{O}|$ do not always differ by composition with some element of $\Gamma_{|f|(x)}$, as the following example shows.

\begin{example}[Different local lifts]\label{exemple: different local lifts}
Consider the action of $\mathbb{Z}_4$ on $\mathbb{R}\times\mathbb{C}$ generated by the multiplication by $\mathrm{i}=\sqrt{-1}$ on $\mathbb{C}$ and let $\mathcal{O}$ be the corresponding quotient orbifold. Define $\widetilde{f}_1,\widetilde{f}_2:\mathbb{R}\to\mathbb{R}\times\mathbb{C}$ by $\widetilde{f}_1(t)=(t,e^{-t^{-2}})$ and
$$\widetilde{f}_2(t)=\left\{\begin{array}{rcl}
(t,e^{-t^{-2}}) & \mbox{if} & t\leq 0,\\
(t,\mathrm{i}e^{-t^{-2}}) & \mbox{if} & t>0.\end{array}\right.$$
It is clear that $\widetilde{f}_1$ and $\widetilde{f}_2$ are local lifts for the same underlying map $|f|:\mathbb{R}\to|\mathcal{O}|$ and that they do not differ by an element of $\mathbb{Z}_4$. The more technical definitions of smooth maps between orbifolds that we mentioned above take this kind of phenomenon into account by considering these distinct lifts to represent different orbifold maps, even though they represent the same underlying map.
\end{example}

\begin{exercise}
Show that the projections $\mathcal{O}_1\times\dots\times\mathcal{O}_n\to\mathcal{O}_i$ are smooth maps.
\end{exercise}

More properties of smooth maps will be seen in Section \ref{section: diff geometry}, where we present some results from elementary differential topology of orbifolds. 

\section{Quotient orbifolds}\label{section: quotient orbifolds}

Orbifolds appear naturally as quotients of smooth Lie group actions. More precisely, we have the following result.

\begin{proposition}\label{prop: quotient orbifolds}
Suppose that a Lie group $G$ acts properly, effectively and almost freely on a smooth manifold $M$. Then the quotient space $M/G$ has a natural orbifold structure.
\end{proposition}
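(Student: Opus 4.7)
The plan is to build a natural orbifold atlas for $M/G$ whose charts are modeled on slices of the $G$-action on $M$. The central tool is the slice (or tube) theorem for proper actions: for each $x \in M$ there exists a $G_x$-invariant submanifold $S_x \subset M$ through $x$, transverse to the orbit, such that the natural map $G \times_{G_x} S_x \to G \cdot S_x$ is a $G$-equivariant diffeomorphism onto an open saturated neighborhood of $Gx$. Because the action is proper and almost free, $G_x$ is compact and discrete, hence finite. Properness also forces $M/G$ to be Hausdorff, and paracompactness descends from $M$.

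For each $x$ I would fix a $G$-invariant Riemannian metric on $M$ (available for proper actions) and take $S_x$ to be the image under $\exp_x$ of a small $G_x$-invariant open ball in the normal space to the orbit at $x$. This identifies $S_x$ with a connected open subset $\widetilde{U} \subset \mathbb{R}^n$, where $n = \dim M - \dim G$, on which $G_x$ acts linearly by orthogonal transformations through the slice representation. To meet the effectiveness requirement in the definition of an orbifold chart, I would let $H_x := G_x / K_x$, where $K_x$ is the (finite) kernel of the slice representation, observing that $\widetilde{U}/G_x = \widetilde{U}/H_x$. Setting $\phi_x := \pi \circ \iota$ with $\iota : \widetilde{U} \hookrightarrow M$ the inclusion and $\pi : M \to M/G$ the quotient projection, the tube diffeomorphism descends to a homeomorphism $\widetilde{U}/H_x \cong (G \cdot S_x)/G \subset M/G$, so $(\widetilde{U}, H_x, \phi_x)$ is a valid linear orbifold chart about $[x]$.

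For compatibility at $[y] \in U_1 \cap U_2$, I would pick lifts $y_1 \in \widetilde{U}_1$ and $y_2 \in \widetilde{U}_2$ in the same $G$-orbit, related by $y_1 = g \cdot y_2$, and apply the slice construction at $y_1$ with sufficiently small radius to produce an intermediate chart $(\widetilde{U}_3, H_{y_1}, \phi_3)$ about $[y]$. When $y_1$ is close enough to $x_1$, transversality persists and $\widetilde{U}_1$ is itself a slice at $y_1$ (with $G_{y_1} \subset G_{x_1}$ after identification), so $\widetilde{U}_3$ can be arranged as an $H_{y_1}$-invariant open subset of $\widetilde{U}_1$, furnishing the inclusion as one embedding; translation by $g^{-1}$ carries $\widetilde{U}_3$ into a slice at $y_2$, and after further shrinking yields the embedding into $\widetilde{U}_2$.

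The step I expect to be the main obstacle is the slice-theorem machinery together with the bookkeeping for compatibility: producing the $G_x$-invariant slices in explicit coordinates, establishing the equivariant tube diffeomorphism, isolating the effective chart group, and coordinating slices at different basepoints so that the embeddings required by the atlas axiom actually exist as genuine containments after shrinking. Once this local model is secured, the remaining items (coverage of $M/G$, independence of the choices up to refinement, and the claimed dimension) reduce to routine adaptations of the manifold-case arguments.
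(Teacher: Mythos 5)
Your proposal is correct and follows essentially the same route as the paper's (sketched) proof: properness gives Hausdorffness of $M/G$ and finiteness of the isotropy groups, and the slice/tubular neighborhood theorem with the linearized isotropy representation on $B_\varepsilon(0)\subset\nu_x(Gx)$ produces the linear charts $(\widetilde{U},G_x,\pi\circ\exp^\perp)$. Your two additions---passing to $G_x/K_x$ to guarantee the chart group acts effectively, and the explicit compatibility check via nested slices---are welcome refinements of details the paper leaves implicit, not a different method.
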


\begin{proof} Let us just sketch the proof. As $G\times M\ni (g,x)\mapsto (g x, x)\in M\times M$ is a proper map between locally compact spaces, it is also closed, hence $R:=\{(x,y)\in M\times M\ |\ G(x)=G(y)\}$ is closed. As the quotient projection $\pi:M\to M/G$ is an open map, it follows that $\pi((M\times M)\setminus R)$, the complement of the diagonal in $M/G\times M/G$, is open. Therefore $M/G$ is Hausdorff. Moreover, it is clearly paracompact.

Now, for any $x\in M$ there is a slice (see \cite{alex}, Theorem 3.49) $S_x=\exp^\perp(B_\varepsilon(0))$ (with respect to a suitable Riemannian metric on $M$) on which the finite isotropy subgroup $G_x$ acts. Defining $\mathrm{Tub}(Gx):=G(S_x)$, the tubular neighborhood theorem (see, for instance, \cite{alex}, Theorem 3.57) asserts that $\mathrm{Tub}(Gx)/G\cong S_x/G_x$, so $(B_\varepsilon(0),G_x,\pi\circ\exp^\perp)$ is an orbifold chart around $\pi(x)$, where we consider the linearized action of $G_x$ on $B_\varepsilon(0)$ via the isotropy representation $G_x<\mathrm{GL}(T_xS_x)$.\end{proof}

We will denote the \textit{quotient orbifold} obtained this way by $M/\!/G$ in order to differentiate it from its underlying topological space $M/G$. That is, we have
$$|M/\!/G|=M/G.$$
We say that an orbifold $\mathcal{O}$ which is diffeomorphic to a quotient orbifold $M/\!/G$ is \textit{good} or \textit{developable}, when $G$ is discrete, and \textit{very good} when $G$ is finite. Otherwise we have a \textit{bad} orbifold. 

\begin{example}
All examples from Section \ref{section: group actions}, except Example \ref{example: continuous rotations}, are good orbifolds. Notice that the same orbifold can appear as a quotient by a discrete group and also as a quotient by a finite group, such as the pillow case in Example \ref{example: pillow}.
\end{example}

\begin{exercise}
Show that the quotient space of the action in Example \ref{example: continuous rotations} can be realized as a very good orbifold.
\end{exercise}

Let us see another example coming from a non-discrete action.

\begin{example}[Weighted projective space]\label{example: weighted complex projective space}
Fix $\lambda=(\lambda_0,\dots,\lambda_n)\in\mathbb{N}^{n+1}$ satisfying $\gcd(\lambda_0,\dots,\lambda_n)=1$. We now modify the standard action of $\mathbb{C}^\times$ on $\mathbb{C}^{n+1}\setminus\{0\}$ by adding weights given by $\lambda$. Precisely, let $z\in\mathbb{C}^\times$ act by
\begin{equation}\label{eq: weighted Hopf action} z(z_0,\dots,z_n)=(z^{\lambda_0}z_0,\dots,z^{\lambda_n}z_n).\end{equation}
We call this action the \textit{weighted Hopf action}. The quotient orbifold $(\mathbb{C}^{n+1}\setminus\{0\})/\!/\mathbb{C}^\times$ is called a \textit{weighted complex projective space}, which we denote by $\mathbb{CP}^n[\lambda_0,\dots,\lambda_n]$ (or simply $\mathbb{CP}^n[\lambda]$ when the exact weights are not relevant). Weighted projective spaces play the same role in the category of orbifolds as the usual complex projective space plays in the category of smooth manifolds.  As the later, they can also be seen as algebraic varieties and, so, they exemplify how orbifolds can appear in algebraic geometry (see, for example,\cite{dolgachev}).

The weighted Hopf action restricts to an action of $\mathbb{S}^1<\mathbb{C}^\times$ on $\mathbb{S}^{2n+1}\subset\mathbb{C}^{n+1}$ with the same quotient, so that we could equivalently define $\mathbb{CP}^n[\lambda_0,\dots,\lambda_n]:=\mathbb{S}^{2n+1}/\!/\mathbb{S}^1$. Let us study the case $n=1$ to get a better grasp of this action. There are two special (exceptional) orbits of $\mathbb{S}^1$, namely $\mathbb{S}^1(1,0)$ and $\mathbb{S}^1(0,1)$, with stabilizers $\mathbb{Z}_{\lambda_0}$ and $\mathbb{Z}_{\lambda_1}$, respectively, which coincide with the singular orbits of the $\mathbb{T}^2$-action $(t_0,t_1)(z_0,z_1)=(t_0z_0,t_1z_1)$. The other $\mathbb{S}^1$-orbits are principal and contained within the regular orbits of $\mathbb{T}^2$, defining closed $\lambda_0/\lambda_1$-Kronecker foliations on them. Figure \ref{figfootball} illustrates these orbits via stereographic projection of $\mathbb{S}^3$ (notice that one of the exceptional orbits is a circle that passes through infinity, so is projected to a vertical line). With this its not difficult to observe that $\mathbb{CP}^1[\lambda_0,\lambda_1]$ is simply the $(\lambda_0,\lambda_1)$-football (see Example \ref{hopfaction}), the poles corresponding to the exceptional $\mathbb{S}^1$-orbits. Notice also that $\mathbb{CP}^n[1,\dots,1]$ is just the usual complex projective space.

\begin{figure}
\centering{
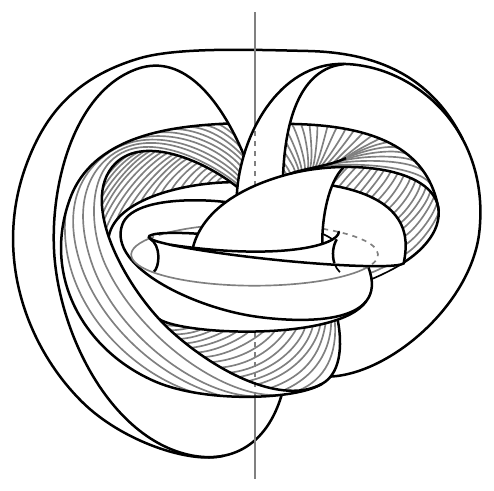}
\caption{The weighted Hopf action on $\mathbb{S}^3$.}
\label{hopfaction}
\end{figure}

Charts for $\mathbb{CP}^n[\lambda_0,\dots,\lambda_n]$ that are compatible with the induced orbifold structure as a quotient can be constructed as follows (see \cite{boyer}, Section 4.5, for further details). Cover the space $|\mathbb{CP}^n[\lambda]|$ with the $n+1$ open sets
$$U_i=\Big\{[z_0:\dots:z_n]\in\mathbb{CP}^n[\lambda]\ \Big|\ z_i\neq0\Big\},$$
where we use homogeneous coordinates as in the manifold case, that is, $[z_0:\dots:z_n]$ denotes the orbit of $(z_0,\dots,z_n)$. The charts will be $(\widetilde{U}_i,G_i,\varphi_i)$, where $\widetilde{U}_i=\mathbb{C}^n$ with affine coordinates $(w_{i0},\dots,\widehat{w}_{ii},\dots,w_{in})$ satisfying $w_{ij}^{\lambda_i}=z_j^{\lambda_i}/z_i^{\lambda_j}$, and the maps $\varphi_i:\widetilde{U}_i\to U_i$ are given by
$$\varphi_i(w_{i0},\dots,\widehat{w}_{ii},\dots,w_{in})=[w_{i0}^{\lambda_i},\dots,w_{i(i-1)}^{\lambda_i},1,w_{i(i+1)}^{\lambda_i},\dots,w_{in}^{\lambda_i}].$$
The chart groups $G_i\cong\mathbb{Z}_{\lambda_i}$ are simply the groups of $\lambda_i$th roots of the unity acting on $\widetilde{U}_i$ by multiplication.

The singular locus of $\mathbb{CP}^n[\lambda]$ consist of copies of $\mathbb{CP}^k[\lambda_k]$, $0\leq k\leq n$, that correspond to some of the coordinate $(k+1)$-dimensional subspaces of $\mathbb{C}^{n+1}$. We can visualize this stratification as an $n$-simplex, where each $k$-cell correspond to a copy of $\mathbb{CP}^k[\lambda']$. Precisely, the subset
$$\Big\{[z_0:\dots:z_n]\in\mathbb{CP}^n[\lambda_0,\dots,\lambda_n]\ \Big|\ z_j=0 \mbox{ for } j\neq i_1,\dots,i_k\Big\}$$
is singular if and only if $l:=\gcd(\lambda_{i_1},\dots,\lambda_{i_k})>1$.  In this case the local group at a generic point in this singular subset is $\mathbb{Z}_l$.
\end{example}

\chapter{Algebraic topology of orbifolds}\label{section: fundamental group and coverings}

In this chapter we will see a little of the algebraic topology of orbifolds, focusing manly on the notions of orbifold fundamental group and coverings, whose developments are mainly due to Thurston \cite{thurston}. We begin with some theory of pseudogroups, which we now recall following mostly \cite{salem}.

\section{Pseudogroups}\label{subsection: associated pseudogroups}

Let $S$ be a smooth manifold. A \textit{pseudogroup} $\mathscr{H}$ of local diffeomorphisms of $S$ consists of a set of diffeomorphisms $h:U\to V$, where $U$ and $V$ are open sets of $S$, such that

\begin{enumerate}[(i)]
\item $\id_U\in\mathscr{H}$ for any open set $U\subset S$,
\item $h\in\mathscr{H}$ implies $h^{-1}\in\mathscr{H}$,
\item if $h_1:U_1\to V_1$ and $h_2:U_2\to V_2$ are in $\mathscr{H}$, then their composition
$$h_2\circ h_1:h_1^{-1}(V_1\cap U_2)\longrightarrow h_2(V_1\cap U_2)$$
also belongs to $\mathscr{H}$,
\item if $h:U\to V$ is in $\mathscr{H}$, then its restriction $h:U'\to h(U')$, to each open set $U'\subset U$, is also in $\mathscr{H}$, and
\item if $U\subset S$ is open and $k:U\to V$ is a diffeomorphism such that $U$ admits an open cover $\{U_i\}$ with $k|_{U_i}\in\mathscr{H}$ for all $i$, then $k\in\mathscr{H}$.
\end{enumerate}

Notice that any family of local diffeomorphisms of $S$ containing the identity generates a pseudogroup, which is obtained by taking inverses, restrictions, compositions and unions of elements in the family.

The \textit{$\mathscr{H}$-orbit} of $x\in S$ consists of the points $y\in S$ for which there is some $h\in\mathscr{H}$ satisfying $h(x)=y$. The quotient by the corresponding equivalence relation, endowed with the quotient topology, is the \textit{space of orbits} of $\mathscr{H}$, that we denote $S/\mathscr{H}$.

\begin{example}[Orbifolds as pseudogroups]\label{example: orbifold associated pseudogroup} Let $\mathcal{O}$ be an orbifold and fix an atlas $\mathcal{A}=\{(\widetilde{U}_i,H_i,\phi_i)\}$. We define
$$U_\mathcal{A}:=\bigsqcup_{i\in I}\widetilde{U}_i\ \ \ \mbox{and}\ \ \ \phi:=\bigsqcup_{i\in I} \phi_i:U_\mathcal{A}\to |\mathcal{O}|,$$
that is, $x\in \widetilde{U}_i\subset U_\mathcal{A}$ implies $\phi(x)=\phi_i(x)$. A \textit{change of charts} of $\mathcal{A}$ is a diffeomorphism $h:V\to W$, with $V,W\subset U_\mathcal{A}$ open sets, such that $\phi\circ h=\phi|_V$. Note that embeddings between charts of $\mathcal{A}$ and, in particular, the elements of the chart groups $H_i$ are changes of charts. The collection of all changes of charts of $\mathcal{A}$ generates a pseudogroup $\mathscr{H}_{\mathcal{A}}$ of local diffeomorphisms of $U_\mathcal{A}$, and $\phi$ induces a homeomorphism $U_\mathcal{A}/\mathscr{H}_{\mathcal{A}}\to|\mathcal{O}|$.
\end{example}

Let $\mathscr{H}$ and $\mathscr{K}$ be pseudogroups of local diffeomorphisms of $S$ and $T$, respectively. A (smooth) \textit{equivalence} between $\mathscr{H}$ and $\mathscr{K}$ is a maximal collection $\Phi$ of diffeomorphisms from open sets of $S$ to open sets of $T$ such that $\{\mathrm{Dom}(\varphi)\ |\ \varphi\in\Phi\}$ covers $S$, $\{\mathrm{Im}(\varphi)\ |\ \varphi\in\Phi\}$ covers $T$ and, for all $\varphi,\psi\in\Phi$, $h\in\mathscr{H}$ and $k\in\mathscr{K}$, we have $\psi^{-1}\circ k\circ\varphi\in\mathscr{H}$, $\psi\circ h\circ\varphi^{-1}\in\mathscr{K}$ and $k\circ\varphi\circ h\in\Phi$, whenever these compositions make sense.

\begin{example}
Let $\mathcal{A}_i$, $i=1,2$, be two equivalent atlases for an orbifold $\mathcal{O}$, with common refinement $\mathcal{B}$. Then for each chart $(\widetilde{V}_k,H_k,\phi_k)$ of $\mathcal{B}$ we have an embedding $\lambda^k_i$ in some chart of $\mathcal{A}_i$. The maximal collection of diffeomorphisms generated by $\{\lambda^k_2\circ(\lambda^k_1)^{-1}\}$ is an equivalence between $\mathscr{H}_{\mathcal{A}_1}$ and $\mathscr{H}_{\mathcal{A}_2}$. Thus, up to equivalence, we can define the \textit{pseudogroup $\mathscr{H}_{\mathcal{O}}$ of changes of charts of $\mathcal{O}$}. We will sometimes abuse the notation and use $(U_\mathcal{O},\mathscr{H}_{\mathcal{O}})$ to mean one of its representatives, when it is clear that a different choice would not affect the results.
\end{example}

Changes of charts can be used as an alternative notion of compatibility between the charts in an orbifold atlas, yielding therefore yet another definition for orbifolds which is equivalent to our definition (see \cite{mrcun}, Proposition 2.13, for details): one could define an orbifold $\mathcal{O}$ as an equivalence class $[(\mathscr{H},S)]$ of pseudogroups of local diffeomorphisms such that $S/\mathscr{H}$ is Hausdorff and, for any $x\in S$, there is a neighborhood $U\ni x$ such that $\mathscr{H}|_U$ is generated by a finite group of diffeomorphisms of $U$.

\begin{remark}[Orbifolds as groupoids]\label{remark: orbifolds as groupoids}
The pseudogroup $\mathscr{H}_{\mathcal{O}}$ is also relevant in enabling one to associate to $\mathcal{O}$ a Lie groupoid. This is in fact the modern approach to orbifolds, and the groupoid language can provide new insights to the theory. The groupoid is obtained by simply passing to the germs of the maps in a pseudogroup representing $\mathcal{O}$: if $\mathscr{H}_{\mathcal{A}}\in \mathscr{H}_{\mathcal{O}}$, consider $\mathrm{G}_{\mathcal{A}}$ the groupoid of germs of elements in $\mathscr{H}_{\mathcal{A}}$. Then $\mathrm{G}_{\mathcal{A}}$ is a proper, effective, \textit{étale} Lie groupoid, and for a different compatible atlas $\mathcal{B}$, the groupoid $\mathrm{G}_{\mathcal{B}}$ is Morita equivalent to $\mathrm{G}_{\mathcal{A}}$ (see \cite{mrcun}, Proposition 5.29). Hence we can associate to $\mathcal{O}$ a unique Morita equivalence class of proper Lie groupoids $\mathrm{G}_{\mathcal{O}}$. Conversely, any proper, effective, \textit{étale} Lie groupoid $G_1\rightrightarrows G_0$ defines an orbifold structure on its coarse moduli space $G_0/G_1$, with $\mathrm{G}_{G_0/G_1}$ Morita equivalent to $G_1\rightrightarrows G_0$ (see \cite{mrcun}, Corollary 5.31). We refer to \cite{adem}, \cite{boyer}, \cite{lerman}, \cite{moerdijk} and \cite{pronk} for more details on orbifold theory via Lie groupoids.
\end{remark}

\section{Orbifold fundamental group}\label{section: piorb}

The notion of fundamental group can be generalized to orbifolds as homotopy classes of loops on pseudogroups representing them. This algebraic invariant will be richer then the ordinary fundamental group since it will capture some information on the singularities, besides the topological information of the underlying topological space. The construction actually works for general pseudogroups and, although it is similar to the classical one, a more elaborate notion of homotopy classes is needed in order to manage the local nature of pseudogroups. In this section we follow the presentation in \cite{salem}.

Let $\mathscr{H}$ be a pseudogroup of local diffeomorphisms  of $S$. An \textit{$\mathscr{H}$-loop} with base point $x\in S$ consists of
\begin{enumerate}[(i)]
\item a sequence $0=t_0<\dots<t_n=1$,
\item a continuous path $c_i:[t_{i-1},t_i]\to S$, for each $1\leq i\leq n$,
\item an element $h_i\in\mathscr{H}$ defined in a neighborhood of $c_i(t_i)$, for each $i$, such that $h_ic_i(t_i)=c_{i+1}(t_i)$, for $1\leq i\leq n-1$, and  $c_1(0)=h_nc_n(1)=x$ (see Figure \ref{hloop}).
\end{enumerate}

\begin{figure}
\centering{
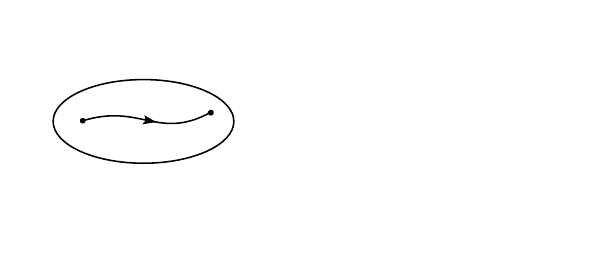}
\caption{An $\mathcal{H}$-loop, with $n=3$.}
\label{hloop}
\end{figure}

A \textit{subdivision} of a $\mathscr{H}$-loop is a new $\mathscr{H}$-loop obtained by adding points to $[0,1]$, taking restrictions of the paths $c_i$ to the new intervals formed, and taking $h=\mathrm{id}$ at the new points. Two $\mathscr{H}$-loops at $x$ are \textit{equivalent} if they admit subdivisions $(h_i,c_i)$ and $(h_i',c_i')$ such that for each $1\leq i\leq n$ there exists an element $g_i\in\mathscr{H}$ defined on a neighborhood of $c_i$ satisfying
\begin{enumerate}[(i)]
\item $g_1=\mathrm{id}$ and $g_i\circ c_i=c_i'$,
\item $h_i'\circ g_i$ and $g_{i+1}\circ h_i$ have the same germ at $c_i(t_i)$, for $1\leq i\leq n-1$,
\item $h_n'\circ g_n$ and $h_n$ have the same germ at $c_n(1)$.
\end{enumerate}
A \textit{deformation} of an $\mathscr{H}$-loop $(h_i,c_i)$ consists of deformations $c_i^s$ of the paths $c_i$ so that $(h_i,c_i^s)$ is an $\mathscr{H}$-loop based at $x$ for each $s\in[0,1]$.

Two $\mathscr{H}$-loops are in the same homotopy class if one can be obtained from the other by a finite number of subdivisions, equivalences and deformations (see Figure \ref{homotopic}). The set of homotopy classes of $\mathscr{H}$-loops based at $x\in\mathscr{H}$ forms a group $\pi_1(\mathscr{H},x)$, with the product defined by concatenation of loops. If $S/\mathscr{H}$ is connected then for any $x,y\in S$ there is an isomorphism $\pi_1(\mathscr{H},x)\cong\pi_1(\mathscr{H},y)$. In this case we will often omit the base point, when it is not relevant, denoting just $\pi_1(\mathscr{H})$. Moreover, an equivalence $\Phi$ between $(\mathscr{H},S)$ and $(\mathscr{K},T)$ clearly defines an isomorphism $\pi_1(\mathscr{H},x)\cong\pi_1(\mathscr{H},\phi(x))$, for $\phi\in\Phi$ defined in a neighborhood of $x$.

\begin{figure}
\centering{
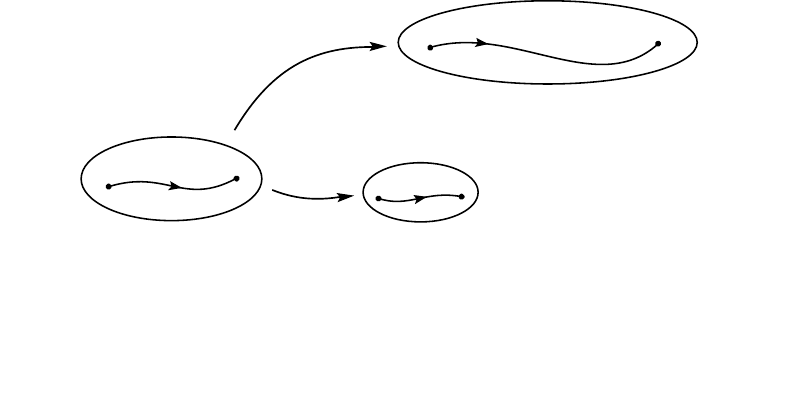}
\caption{Homotopic $\mathcal{H}$-loops.}
\label{homotopic}
\end{figure}

\begin{exercise}[Fundamental groups of manifolds]
Show that if $M$ is a connected manifold and $\mathscr{H}_M$ is its pseudogroup of changes of charts, then $\pi_1(\mathscr{H}_M)\cong\pi_1(M)$.
\end{exercise}

We define the \textit{fundamental group} of an orbifold $\mathcal{O}$ at $x\in|\mathcal{O}|$ as $\piorb(\mathcal{O},x):=\pi_1(\mathscr{H}_{\mathcal{O}},\tilde{x})$, for $\tilde{x}\in U_{\mathcal{O}}$ with $\phi(\tilde{x})=x$. From what we saw above, its isomorphism class does not depend upon the atlas we choose (nor on the lift $\tilde{x}$, in particular). By construction, there is a surjective homomorphism $\piorb(\mathcal{O})\to \pi_1(|\mathcal{O}|)$ (see \cite{haefliger4}). It is instructive to exemplify with the simple case of finite group actions.

\begin{example}[Cone]
Let $\mathcal{O}$ be the quotient orbifold of a rotation $r$ of order $p$ on $\mathbb{R}^2$, as in Example \ref{example: cones}. Then one can readily check that $(r^k,c_0)$, where $c_0$ is the constant path $c_0(t)\equiv 0$ and $0<k<p$, is not trivial, because of the condition on the germs in the definition of equivalence of loops. In fact, this argument shows that each $k$ defines a distinct homotopy class. On the other hand, any non-constant $\mathscr{H}_{\mathcal{O}}$-loop $(r^k,c)$ based at $0$ clearly can be homotoped to $(r^k,c_0)$, so we actually have $\piorb(\mathcal{O},0)\cong\mathbb{Z}_p$.

We get a more geometric interpretation of the isomorphism $\piorb(\mathcal{O})\cong\mathbb{Z}_p$ if we consider a regular base point $x\in|\mathcal{O}|$. Let $\gamma$ be an ordinary loop on $|\mathcal{O}|$ based at $x$, which winds around the cone vertex $k$-times. Notice that $\gamma$ lifts to a path $\tilde{\gamma}$ in $\mathbb{R}^2$ such that $\tilde{\gamma}(1)=r^k(\tilde{\gamma}(0))$, hence $(r^{p-k},\tilde{\gamma})$ is a $\mathscr{H}_{\mathcal{O}}$-loop. If $k\mod p\neq0$ this loop is not trivial, since any homotopy to the constant map would have to move the end point by a deformation. So, intuitively, a singularity of order $p$ is ``perceived'' by $\piorb$ as $1/p$-th of a point, in the sense that a loop in the underlying topological space has to wind $p$ times around it before it can be homotoped to the trivial path. 
\end{example}

In fact, for any orbifold chart $(\widetilde{U},H,\varphi)$ one has $\piorb(\widetilde{U}/\!/H)\cong H$. The fundamental group of more complicated orbifolds can be calculated by the orbifold version of the Seifert--Van Kampen theorem that follows. In what follows we use the notation of Example \ref{example: open suborbifolds} and, for an inclusion $i_{\mathcal{U}\mathcal{O}}$ of an open suborbifold $\mathcal{U}=\mathcal{O}|_U$, we denote the induced homomorphism by $i^*_{\mathcal{U}\mathcal{O}}:\piorb(\mathcal{U},x)\to\piorb(\mathcal{O},x)$.

\begin{theorem}[Seifert--Van Kampen theorem for orbifolds {\cite[Theorem 4.7.1]{choi}}]
Let $\mathcal{O}$ be a connected orbifold and suppose $U$ and $V$ are open sets such that $|\mathcal{O}|=U\cup V$ and $W:=U\cap V$ is connected. Then for any $x\in W$,
$$\piorb(\mathcal{O},x)\cong \piorb(\mathcal{U},x)*_{\piorb(\mathcal{W},x)}\piorb(\mathcal{V},x),$$
where the later is the amalgamated free product, that is, the quotient of $\piorb(\mathcal{U},x)*\piorb(\mathcal{V},x)$ by the normal subgroup generated by $\{i^*_{\mathcal{W}\mathcal{U}}(\gamma)i^*_{\mathcal{W}\mathcal{V}}(\gamma^{-1})\ |\ \gamma\in\piorb(\mathcal{W},x)\}$.
\end{theorem}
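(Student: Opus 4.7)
The plan is to adapt the classical Seifert--Van Kampen argument to the pseudogroup definition of $\piorb$ from Section~\ref{section: piorb}. Fix a lift $\tilde{x}\in U_\mathcal{O}$ of $x$ lying in a chart over $W$, and represent elements of $\piorb(\mathcal{O},x)$, $\piorb(\mathcal{U},x)$, $\piorb(\mathcal{V},x)$, $\piorb(\mathcal{W},x)$ by $\mathscr{H}$-loops at $\tilde{x}$ in the respective restricted pseudogroups $\mathscr{H}_\mathcal{O}$, $\mathscr{H}_\mathcal{U}$, $\mathscr{H}_\mathcal{V}$, $\mathscr{H}_\mathcal{W}$. Since $i^*_{\mathcal{U}\mathcal{O}}\circ i^*_{\mathcal{W}\mathcal{U}}=i^*_{\mathcal{W}\mathcal{O}}=i^*_{\mathcal{V}\mathcal{O}}\circ i^*_{\mathcal{W}\mathcal{V}}$, the universal property of the free product yields a homomorphism $\Phi:\piorb(\mathcal{U},x)*\piorb(\mathcal{V},x)\to\piorb(\mathcal{O},x)$ that kills each relation $i^*_{\mathcal{W}\mathcal{U}}(\gamma)i^*_{\mathcal{W}\mathcal{V}}(\gamma^{-1})$ and so factors through the claimed quotient. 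The task is then to show the induced map is an isomorphism.

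For surjectivity, given an $\mathscr{H}_\mathcal{O}$-loop $(h_i,c_i)$ at $\tilde{x}$, I would apply the Lebesgue number lemma to the open cover $\{\phi^{-1}(U),\phi^{-1}(V)\}$ of $U_\mathcal{O}$ in order to refine the subdivision $0=t_0<\cdots<t_n=1$ so that each $c_i$ maps entirely into $\phi^{-1}(U)$ or into $\phi^{-1}(V)$; after one more refinement I may assume that whenever consecutive pieces belong to different families the transition point lies over $W$. Since $W$ is connected, I can choose paths in $\phi^{-1}(W)$ from each transition point back to $\tilde{x}$ and insert each such path together with its reverse; this rewrites the class of $(h_i,c_i)$ as a concatenation of $\mathscr{H}_\mathcal{U}$-loops and $\mathscr{H}_\mathcal{V}$-loops based at $\tilde{x}$, exhibiting it as a value of $\Phi$.

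Injectivity is the main technical step. Suppose a word in the free product maps to the trivial class in $\piorb(\mathcal{O},x)$; the hypothesis provides a finite sequence of subdivisions, pseudogroup equivalences $(g_i)$, and continuous deformations $c_i^s$ trivializing the concatenated loop. I would apply the Lebesgue lemma once more, this time to a grid on $[0,1]\times[0,1]$ parametrizing the deformations, with respect to the cover $\{\phi^{-1}(U),\phi^{-1}(V)\}$, so that each cell maps into one of the two preimages and every vertical edge where the family changes lands in $\phi^{-1}(W)$. Reading the grid column by column then breaks the trivializing homotopy into a concatenation of homotopies realizable entirely in $\mathscr{H}_\mathcal{U}$ or in $\mathscr{H}_\mathcal{V}$, glued along homotopies in $\mathscr{H}_\mathcal{W}$ that correspond exactly to amalgamation moves. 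This witnesses the triviality of the word in the amalgamated product.

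The hard part is the compatibility of the equivalences $g_i$ with the grid decomposition: each $g_i$ is a global datum attached to the loop and must be replaced by a finite composition of elements of $\mathscr{H}_\mathcal{U}$, $\mathscr{H}_\mathcal{V}$, and $\mathscr{H}_\mathcal{W}$ whose domains respect the partition produced by the grid. This forces one to refine the grid and the subdivision simultaneously, and to invoke axiom (iv) in the definition of pseudogroup (the gluing property) to realize each restricted $g_i$ inside the appropriate subpseudogroup. Once this bookkeeping is in place, the remaining manipulations are routine adaptations of the topological Seifert--Van Kampen argument, with subdivisions playing the role of refining chains of open sets and the three notions of homotopy (subdivision, equivalence, deformation) being handled in turn.
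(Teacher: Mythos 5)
The paper does not actually prove this theorem --- it is quoted from Choi \cite[Theorem 4.7.1]{choi} --- so there is no in-text argument to compare yours against; I will assess your outline on its own terms. The overall strategy (build $\Phi$ from the universal property, get surjectivity by Lebesgue subdivision of $\mathscr{H}_{\mathcal{O}}$-loops, get injectivity by analyzing the trivializing homotopy) is the standard and correct way to transfer the classical argument to the pseudogroup setting, and your surjectivity step is essentially right. One concrete imprecision there: you propose to ``choose paths in $\phi^{-1}(W)$ from each transition point back to $\tilde{x}$,'' but $\phi^{-1}(W)\subset U_{\mathcal{O}}$ is a disjoint union of open pieces of charts and is essentially never path-connected; the transition point and $\tilde{x}$ will usually lie in different charts. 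Connectedness of $W$ only gives connectedness of the orbit space $\phi^{-1}(W)/\mathscr{H}_{\mathcal{W}}$, so what you must insert are $\mathscr{H}_{\mathcal{W}}$-\emph{paths} (chains of continuous paths joined by elements of $\mathscr{H}_{\mathcal{W}}$, exactly as in the definition of $\mathscr{H}$-loop), together with their reverses. This is fixable but has to be said, because it is precisely where the hypothesis on $W$ enters.

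The genuine gap is in injectivity. You treat the trivializing homotopy as if it were a single continuous map on a square to which a Lebesgue grid can be applied, but by the definition in Section \ref{section: piorb} a homotopy of $\mathscr{H}$-loops is a \emph{finite chain} of three different elementary moves --- subdivisions, equivalences, and deformations --- and only the deformation moves produce continuous families $c_i^s$ amenable to a grid argument (and even then one square per segment $[t_{i-1},t_i]$, not one global square). The correct organization is an induction on the length of this chain: one shows that each single elementary move, applied to a loop already written as a product of $\mathscr{H}_{\mathcal{U}}$- and $\mathscr{H}_{\mathcal{V}}$-loops, changes the associated word only by the amalgamation relations. For subdivisions this is trivial, for deformations your grid argument works, but for equivalences the datum is a family of diffeomorphisms $g_i$ defined near the whole path $c_i$ subject to \emph{germ} conditions at the endpoints, and no grid or appeal to axiom (iv) addresses this: what is needed is a direct check that after cutting $c_i$ at the $U$/$V$ transition points, the restrictions of $g_i$ (which do lie in $\mathscr{H}_{\mathcal{U}}$ or $\mathscr{H}_{\mathcal{V}}$, since these are restrictions of $\mathscr{H}_{\mathcal{O}}$) still satisfy the germ conditions at the new cut points, with the discrepancies absorbed into the inserted basepoint chains. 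You name this as ``the hard part'' but the proposed remedy (refine the grid further) does not engage with it; as written the injectivity half is a plan for a proof rather than a proof. I would also note that Choi's own argument is organized differently --- via the correspondence between coverings of $\mathcal{O}$ and compatible pairs of coverings of $\mathcal{U}$ and $\mathcal{V}$ glued over $\mathcal{W}$ --- which trades this loop-level bookkeeping for the Galois correspondence of the previous section; if the direct approach becomes too painful, that route is available.
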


\begin{example}[Fundamental group of teardrops]\label{fundamental groups of teardrops}
Let $\mathcal{O}$ be the $p$-teardrop orbifold over $\mathbb{S}^2=B_1\cup B_2$ from Example \ref{example football}. Then $\mathcal{B}_1$ is a cone, hence $\piorb(\mathcal{B}_1)\cong\mathbb{Z}_p$ and $\mathcal{B}_2$ is a half-hemisphere, hence simply-connected. Moreover, $\mathcal{B}_1\cap\mathcal{B}_2$ is an annulus, thus the map induced by its inclusion in $\mathcal{B}_1$ is surjective. The Seifert--Van Kampen theorem then asserts that $\piorb(\mathcal{O})$ is trivial.
\end{example}

\begin{exercise}
Calculate the orbifold fundamental group of the $(p,q)$-football.
\end{exercise}

\section{Orbifold coverings}

A \textit{covering} of a smooth orbifold $\mathcal{O}$ is a pair $(\widehat{\mathcal{O}},\rho)$, where $\widehat{\mathcal{O}}$ is another orbifold and $\rho$ is a surjective smooth map $\widehat{\mathcal{O}}\to\mathcal{O}$ satisfying that
\begin{enumerate}[(i)]
\item For each $x\in|\mathcal{O}|$, there is a chart $(\widetilde{U},H,\varphi)$ over $x$ such that $|\rho|^{-1}(U)$ is a disjoint union of open subsets $V_i$,
\item Each $V_i$ admits an orbifold chart of the type $(\widetilde{U},H_i,\varphi_i)$, where $H_i<H$, such that $\rho$ lifts to the identity $\widetilde{\rho}_i:\widetilde{U}\to\widetilde{U}$, with $\overline{\rho}_i:H_i\hookrightarrow H$.
\end{enumerate}
When the preimage of a regular point by $|\rho|$ has cardinality $r<\infty$ we say that $\rho$ is \textit{$r$-sheeted}. Notice that, in general, $|\rho|$ is not a covering between the underlying topological spaces.

\begin{example}[Manifold coverings]
Every covering of manifolds $\widehat{M}\to M$ is an orbifold covering when the manifolds are seen as orbifolds.
\end{example}

\begin{example}[Properly discontinuous actions]
If $G$ acts properly discontinuously on a manifold $M$, then $M$ is a covering space for $M/\!/G$. In fact, $M/\!/H$ is a covering space for $M/\!/G$, for each subgroup $H<G$. Thus, a cone of order $p$, for example, covers a cone of order $kp$, for each $k\in\mathbb{N}$.
\end{example}

\begin{example}[Doubling mirrors]\label{example: doubling mirrors}
Let $\mathcal{O}$ be an orbifold with mirror singular points and let $\{(\widetilde{U}_i,H_i,\phi_i)\}$ be an atlas. For each $i$, define a new chart $(\widetilde{U}_i\times\{-1,1\},H_i,\phi_i')$, where
\begin{enumerate}[(i)]
\item $H_i$ acts by $h(\tilde{x},k)=(h\tilde{x},\mathrm{sgn}(h)k)$ where $\mathrm{sgn}(h)=1$ if the action of $h$ preserves orientation and $-1$ otherwise,
\item $\phi_i':\widetilde{U}_i\times\{-1,1\}\to\widetilde{U}_i\times\{-1,1\}/H_i=:U_i'$ is the quotient map.
\end{enumerate}
Each embedding $(\widetilde{V},G,\psi)\hookrightarrow(\widetilde{U}_i,H_i,\phi_i)$ has an obvious lift
$$(\widetilde{V}\times\{-1,1\},G,\psi')\lhook\joinrel\longrightarrow(\widetilde{U}_i\times\{-1,1\},H_i,\phi_i')$$
which define gluing maps $V'\to U_i'$. The quotient of the disjoint union $\bigsqcup_{i\in I}U_i'$ by the equivalence relation defined by these gluing maps is a Hausdorff, second countable space and hence define a locally oriented orbifold $\widehat{\mathcal{O}}$ with atlas $\{(\widetilde{U}_i\times\{-1,1\},H_i,\phi_i')\}$, which covers $\mathcal{O}$.
\end{example}

A \textit{base point} of a covering $\rho:\widehat{\mathcal{O}}\to\mathcal{O}$ is a regular point $\hat{x}\in|\widehat{\mathcal{O}}|$ that is mapped to a regular point in $|\mathcal{O}|$. A \textit{universal covering} of $\mathcal{O}$ is a covering $\rho:\widehat{\mathcal{O}}\to\mathcal{O}$ such that, given any other covering $\rho':\widehat{\mathcal{O}}'\to\mathcal{O}$ and base points $\hat{x}\in|\widehat{\mathcal{O}}|$ and $\hat{x}'\in|\widehat{\mathcal{O}}'|$ that map to the same point $x\in|\mathcal{O}|$, there exists a unique covering $\pi:\widehat{\mathcal{O}}\to\widehat{\mathcal{O}}'$ such that $\rho=\rho'\circ\pi$ and $|\pi|(\hat{x})=\hat{x}'$. For standard coverings of manifolds it is possible to show that universal coverings exist by combining all coverings of a given manifold through a fiber-product construction. Thurston refined this fiber-product construction in \cite{thurston} and adapted this idea to show that universal orbifold coverings always exist.

\begin{theorem}[Existence of universal coverings {\cite[Proposition 13.2.4]{thurston}}]
Any connected orbifold $\mathcal{O}$ admits a connected universal covering $\rho:\widehat{\mathcal{O}}\to\mathcal{O}$.
\end{theorem}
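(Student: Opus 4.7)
My plan is to follow Thurston's fiber-product construction, adapting the classical manifold argument to the orbifold setting. First I would fix a regular basepoint $x_0\in|\mathcal{O}|_{\mathrm{reg}}$ and consider the collection $\mathcal{C}$ of isomorphism classes of pointed connected orbifold coverings $(\widehat{\mathcal{O}}_\alpha,\rho_\alpha,\hat{x}_\alpha)$ of $\mathcal{O}$ with $|\rho_\alpha|(\hat{x}_\alpha)=x_0$ and $\hat{x}_\alpha$ regular. I would order $\mathcal{C}$ by declaring $\alpha\succeq\beta$ whenever there exists a pointed morphism $\widehat{\mathcal{O}}_\alpha\to\widehat{\mathcal{O}}_\beta$ over $\mathcal{O}$. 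A standard unique-lifting argument at a regular point (essentially the manifold-case proof, since the base points miss the singular loci) shows that such morphisms are unique whenever they exist, and by pulling back any two covers along each other and taking a connected component of the fiber product one obtains a common upper bound, so $\mathcal{C}$ is directed.

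Second, I would form the topological inverse limit
$$|\widehat{\mathcal{O}}|^{\sharp}:=\varprojlim_{\alpha}|\widehat{\mathcal{O}}_\alpha|$$
with the inverse-limit topology, and let $|\widehat{\mathcal{O}}|$ be the path-component of the canonical basepoint $\hat{x}:=(\hat{x}_\alpha)_\alpha$. The projections $|\widehat{\mathcal{O}}|\to|\widehat{\mathcal{O}}_\alpha|$ are continuous and compose with the $\rho_\alpha$ to a single continuous map $|\rho|:|\widehat{\mathcal{O}}|\to|\mathcal{O}|$ covering $x_0$.

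To endow $\widehat{\mathcal{O}}$ with an orbifold structure I would argue locally. Cover $|\mathcal{O}|$ by small linear charts $(\widetilde{U},H,\phi)$, noting that over each $U$ every covering $\widehat{\mathcal{O}}_\alpha$ splits into charts $(\widetilde{U},H_{\alpha,j},\phi_{\alpha,j})$ with $H_{\alpha,j}<H$. A compatible family $(\hat{y}_\alpha)\in|\widehat{\mathcal{O}}|$ sitting over $y\in U$ selects, for each $\alpha$, a subgroup $H_\alpha<H$; moving to larger $\alpha$ in the directed system produces a descending chain of subgroups of the finite group $H$, which therefore stabilizes at some $H_\infty<H$. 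I would then declare $(\widetilde{U},H_\infty,\hat{\phi})$ to be a local chart around $(\hat{y}_\alpha)$. Checking pairwise compatibility of these charts as $(\widetilde{U},H,\phi)$ varies, and verifying the two conditions in the definition of an orbifold covering, is then local and essentially mechanical. Universality is automatic from the construction: any pointed connected covering $(\widehat{\mathcal{O}}',\rho',\hat{x}')$ represents some $\alpha_0\in\mathcal{C}$, and projection onto the $\alpha_0$-factor yields a smooth map $\widehat{\mathcal{O}}\to\widehat{\mathcal{O}}'$ sending $\hat{x}$ to $\hat{x}'$ which is itself a covering.

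The main obstacle will be the third step: one must verify that the stabilized subgroup $H_\infty$ at a given compatible family is actually attained by some single covering $\alpha_0\in\mathcal{C}$, rather than only in the limit, for otherwise the fiber product might fail to be locally modelled by the quotient of $\widetilde{U}$ by a finite group. Thurston's fix is to replace $\mathcal{C}$ by a cofinal subsystem in which every $\widehat{\mathcal{O}}_\alpha$ arises as a connected component of a finite fiber product of previously chosen covers; cofinality then forces the descending chain of subgroups above to be eventually constant at every point, so the proposed atlas genuinely exists and gives an orbifold structure for which $|\rho|$ lifts to a covering $\widehat{\mathcal{O}}\to\mathcal{O}$. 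The remaining set-theoretic issue (that $\mathcal{C}$ is a priori a proper class) is handled by restricting to coverings of bounded cardinality, which suffices since every cover is locally modelled on a quotient of a subset of $\mathcal{O}$ by a subgroup of a chart group.
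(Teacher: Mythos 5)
You should first be aware that the paper does not actually prove this theorem: it states it with a citation to Thurston and only remarks that the argument is a refinement of the fiber-product construction used for manifolds. Your outline is therefore being measured against that intended strategy rather than against a written proof, and your skeleton --- a directed system of pointed connected coverings, common refinements via fiber products, a limit object, local analysis using the finiteness of the chart groups, and universality via the projections --- is indeed the right one.

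The genuine gap is in your second step, and your own ``main obstacle'' paragraph misdiagnoses it. If you give $\varprojlim_\alpha|\widehat{\mathcal{O}}_\alpha|$ the inverse-limit topology, then over a chart $(\widetilde{U},H,\phi)$ the preimage of $U$ is, up to the finite quotient by $H$, of the form $\widetilde{U}\times\varprojlim_\alpha F_\alpha$, where $F_\alpha$ is the discrete fiber of $\widehat{\mathcal{O}}_\alpha$ over $U$; an inverse limit of discrete sets carries the product topology and is typically \emph{not} discrete (the solenoid phenomenon). Consequently the individual sheets $\widetilde{U}/H_\infty$ need not be open in the path component of the basepoint, and condition (i) of the paper's definition of covering --- that $|\rho|^{-1}(U)$ be a disjoint union of \emph{open} subsets each carrying a chart --- can fail for the subspace topology. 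The stabilization of the descending chain of local groups at each fixed compatible family, which you single out as the difficulty, is automatic from the finiteness of $H$ and was never the problem; and passing to a cofinal subsystem built from finite fiber products does not change the limit, so it does not repair the topology. The standard fix, and in effect Thurston's, is to form the fiber product in the category of coverings of $\mathcal{O}$ rather than of topological spaces: over each chart one declares the relevant fiber of compatible families to be discrete, glues the resulting local coverings $\widetilde{U}\times_H(\text{discrete fiber})$, and only then takes the connected component of the basepoint. With that modification the rest of your argument --- the local charts $(\widetilde{U},H_\infty,\hat\phi)$, surjectivity of $|\rho|$ by connectedness of $|\mathcal{O}|$, universality via the projections, and the cardinality bound to avoid the proper-class issue --- goes through.
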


Given two coverings $\rho_i:\widehat{\mathcal{O}}_i\to\mathcal{O}$, a covering morphism is a smooth map $f:\widehat{\mathcal{O}}_1\to\widehat{\mathcal{O}}_2$ so that $\rho_1\circ f=\rho_2$. Similarly, the \textit{automorphism group} $\mathrm{Aut}(\rho)$ of a covering $\rho:\widehat{\mathcal{O}}\to\mathcal{O}$, is the group of \textit{deck transformations} of $\rho$, that is, diffeomorphisms $f:\widehat{\mathcal{O}}\to\widehat{\mathcal{O}}$ such that $\rho\circ f=\rho$.

The universal covering $\rho:\widehat{\mathcal{O}}\to\mathcal{O}$ is unique up to covering isomorphisms, and it is a Galois covering, i.e., $\mathrm{Aut}(\rho)$ acts transitively on the fibers $|\rho|^{-1}(x)$, for each $x\in|\mathcal{O}|$. Below we list more properties of universal coverings.

\begin{proposition}[Properties of the universal covering {\cite[Proposition 4.6.4]{choi}}, {\cite[Corollary 3.19]{bridson}}]
Let $\mathcal{O}$ be a connected orbifold and $\rho:\widehat{\mathcal{O}}\to\mathcal{O}$ its universal covering. Then
\begin{enumerate}[(i)]
\item \label{piorb e deck transformations} $\mathrm{Aut}(\rho)$ is isomorphic to $\piorb(\mathcal{O})$,
\item \label{good orbifolds iff manifold covering} $\rho$ induces a diffeomorphism $\widehat{\mathcal{O}}/\!/\mathrm{Aut}(\rho)\cong\mathcal{O}$,
\item For any subgroup $\Lambda<\mathrm{Aut}(\rho)$, $\rho$ induces a covering $\widehat{\mathcal{O}}/\!/\Lambda\to\mathcal{O}$ and, conversely, any covering of $\mathcal{O}$ is of this type.
\item The set of isomorphism classes of coverings of $\mathcal{O}$ is in correspondence with the set of conjugacy classes of subgroups of $\piorb(\mathcal{O})$.
\end{enumerate}
\end{proposition}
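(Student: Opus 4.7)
The proof parallels the classical covering space arguments, with the main technical input being path- and homotopy-lifting for $\mathscr{H}$-loops through an orbifold covering. First I would establish the lifting lemma: given $\rho:\widehat{\mathcal{O}}\to\mathcal{O}$, a base point $\tilde x$ over $x$, and an $\mathscr{H}_{\mathcal{O}}$-loop $(h_i,c_i)$ at $x$, there is a unique $\mathscr{H}_{\widehat{\mathcal{O}}}$-lift starting at $\tilde x$, and homotopic $\mathscr{H}$-loops lift to paths with identical endpoints. This follows from the definition of a covering — each $V_i$ sits over $U$ via the same domain $\widetilde U$, so local lifts of the paths $c_i$ and of the accompanying germs $h_i$ are canonical — together with a subdivision/compactness argument along $0=t_0<\cdots<t_n=1$. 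I would also record that $\piorb(\widehat{\mathcal{O}})$ is trivial, either as part of the existence theorem or by applying the universal property to $\mathrm{id}:\widehat{\mathcal{O}}\to\widehat{\mathcal{O}}$ to rule out nontrivial self-coverings.

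For (i), define $\Phi:\piorb(\mathcal{O},x)\to\mathrm{Aut}(\rho)$ by sending $[\gamma]$ to the deck transformation $f_{[\gamma]}$ furnished by the universal property, taking $\tilde x$ to the endpoint of the lift of $\gamma$. The lifting lemma makes $\Phi$ well defined and multiplicative. Injectivity: $f_{[\gamma]}=\mathrm{id}$ forces the lift to be an $\mathscr{H}_{\widehat{\mathcal{O}}}$-loop, hence null-homotopic by simple-connectedness of $\widehat{\mathcal{O}}$, so $[\gamma]=1$. Surjectivity: any path in $|\widehat{\mathcal{O}}|$ from $\tilde x$ to $f(\tilde x)$ projects to a loop whose class maps to $f$. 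For (ii), condition (ii) of the covering definition, together with the finiteness of the local groups $H$, shows that $\mathrm{Aut}(\rho)$ acts smoothly, properly discontinuously, effectively and almost freely on $\widehat{\mathcal{O}}$, so Proposition \ref{prop: quotient orbifolds} produces the quotient orbifold $\widehat{\mathcal{O}}/\!/\mathrm{Aut}(\rho)$; the induced map to $\mathcal{O}$ is bijective by the Galois property recalled before the proposition and a local diffeomorphism by the covering charts, hence a diffeomorphism of orbifolds.

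For (iii), the same proposition applies to any $\Lambda<\mathrm{Aut}(\rho)$, yielding an orbifold $\widehat{\mathcal{O}}/\!/\Lambda$ and, by factoring $\rho$ through it using (ii), an orbifold covering $\widehat{\mathcal{O}}/\!/\Lambda\to\mathcal{O}$. Conversely, given any covering $\rho':\widehat{\mathcal{O}}'\to\mathcal{O}$ with compatible base points, the universal property provides a covering $\sigma:\widehat{\mathcal{O}}\to\widehat{\mathcal{O}}'$ with $\rho'\circ\sigma=\rho$; since $\widehat{\mathcal{O}}$ remains simply-connected, $\sigma$ is itself universal, and (ii) applied to $\sigma$ yields $\widehat{\mathcal{O}}'\cong\widehat{\mathcal{O}}/\!/\mathrm{Aut}(\sigma)$ with $\mathrm{Aut}(\sigma)<\mathrm{Aut}(\rho)$ consisting of the deck transformations of $\rho$ that commute with $\rho'$. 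Finally, for (iv), two subgroups $\Lambda_1,\Lambda_2<\mathrm{Aut}(\rho)$ yield isomorphic coverings iff some $f\in\mathrm{Aut}(\rho)$ satisfies $\Lambda_2=f\Lambda_1 f^{-1}$: any covering isomorphism $\widehat{\mathcal{O}}/\!/\Lambda_1\to\widehat{\mathcal{O}}/\!/\Lambda_2$ lifts (by the universal property) to a deck transformation realizing the conjugation, and conversely conjugation by $f$ visibly induces a covering isomorphism.

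The main obstacle is making the pseudogroup lifting rigorous — in particular, proving that the homotopy class of the lifted path is invariant under the combined moves of subdivision, equivalence, and deformation that define homotopy of $\mathscr{H}$-loops, so that $\Phi$ is truly well defined. Once this lifting machinery and the properly discontinuous nature of the $\mathrm{Aut}(\rho)$-action are in place, the rest of the argument is a formal repetition of the manifold case, with Proposition \ref{prop: quotient orbifolds} supplying the orbifold structure on each quotient.
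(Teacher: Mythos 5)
Your outline is correct and follows the standard covering-space argument, which is also what the paper (which only sketches item (i) and defers the rest to Choi and Bridson--Haefliger) has in mind; the paper's sketch builds the isomorphism in the direction $\mathrm{Aut}(\rho)\to\piorb(\mathcal{O},x)$, sending $f$ to the class of the projection of an $\mathscr{H}_{\widehat{\mathcal{O}}}$-path from $f(\tilde x)$ to $\tilde x$, which is just the inverse of your map $\Phi$, so the two presentations are the same argument. You are also right that the real technical content is the path- and homotopy-lifting lemma for $\mathscr{H}$-loops, which neither you nor the paper carries out in detail.

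One concrete point to fix: in items (ii) and (iii) you invoke Proposition \ref{prop: quotient orbifolds} to produce $\widehat{\mathcal{O}}/\!/\mathrm{Aut}(\rho)$ and $\widehat{\mathcal{O}}/\!/\Lambda$, but that proposition is stated for actions on \emph{manifolds}, and the universal cover $\widehat{\mathcal{O}}$ is in general only an orbifold (precisely when $\mathcal{O}$ is bad --- e.g.\ the teardrop is its own universal cover). You therefore need the extension of that proposition to properly discontinuous actions on orbifolds, building quotient charts of the form $(\widetilde U, \widetilde\Lambda_{\tilde x},\cdot)$ where $\widetilde\Lambda_{\tilde x}$ is the extension of the isotropy $\Lambda_{\tilde x}$ by the local group $\Gamma_{\tilde x}$ (in the spirit of the group $\widetilde G_x$ introduced in Section \ref{section: actions on orbifolds}); the paper only records the compact-group version of this as an exercise. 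This is routine but it is genuinely where the orbifold structure on the intermediate covers comes from, so it should not be elided by citing the manifold statement.
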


From item (\ref{good orbifolds iff manifold covering}) we see that an orbifold is good if and only if it admits a covering by a manifold. The proof of item (\ref{piorb e deck transformations}) is similar to the classical case: given  $x\in|\mathcal{O}|$ and $\tilde{x}\in|\rho|^{-1}(x)$, a deck transformation $f$ sends $\tilde{x}$ to another point $\tilde{y}\in|\rho|^{-1}(x)$. Choose an $\mathscr{H}_{\widehat{\mathcal{O}}}$-loop $(h_i,c_i)$ joining $\tilde{y}$ to $\tilde{x}$. The isomorphism $\mathrm{Aut}(\rho)\to\piorb(\mathcal{O},x)$ is given by $f\mapsto\rho_*[(h_i,c_i)]$.

\begin{exercise}
Show that $(p,q)$-footballs are bad orbifolds unless $p=q$.
\end{exercise}

\section{Triangulations and Euler characteristic}

Let $\mathcal{O}$ be a smooth orbifold. A \textit{triangulation} of $\mathcal{O}$ is a triangulation of its underlying topological space $|\mathcal{O}|$ in the usual sense, that is, a homeomorphism between a simplicial complex $T$ and $|\mathcal{O}|$. Recall the canonical stratification $|\mathcal{O}|=\bigsqcup_\alpha \Sigma_\alpha$ of from Section \ref{section: canonical stratification}. We will say that a triangulation of $\mathcal{O}$ is \textit{compatible} when it is compatible with the canonical stratification, in the sense that the interior of each cell of the triangulation (i.e., the image on $|\mathcal{O}|$ of an open face in $T$) is contained in a single stratum.

\begin{theorem}[{\cite[Theorem 4.5.4]{choi}}]
Every smooth orbifold $\mathcal{O}$ admits a compatible triangulation $T$.
\end{theorem}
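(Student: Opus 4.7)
My plan is to construct the triangulation first inside linear charts, where the canonical stratification reduces to the isotropy stratification of a finite linear quotient $\mathbb{R}^n/H$, and then to assemble these pieces by induction on the codimension of strata. By item (v) following the definition of an orbifold, we may choose an atlas $\mathcal{A}=\{(\widetilde{U}_i,H_i,\phi_i)\}$ consisting entirely of linear charts, with each $H_i<\mathrm{O}(n)$ acting linearly; by paracompactness of $|\mathcal{O}|$ we may further assume this atlas is locally finite and has relatively compact domains. In such a chart the isotropy subgroup of $\tilde{x}\in\widetilde{U}_i$ is determined by which of the fixed-point sets $\mathrm{Fix}(K)\subset\widetilde{U}_i$ (with $K<H_i$, each a linear subspace) contain it, so the restriction of the canonical stratification to $U_i$ is locally finite and its strata are images of open subsets of linear subspaces.

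Next I would produce a local compatible triangulation on each $U_i$. The direct route is to apply an equivariant smooth triangulation theorem (in the style of Illman) to obtain an $H_i$-equivariant smooth triangulation of $\widetilde{U}_i$ on which the $H_i$-isotropy is constant on the open interior of each simplex; descending through $\phi_i$ then yields a triangulation of $U_i$ whose open cells each lie entirely in a single stratum. Alternatively, the Hilbert--Weyl theorem on invariant polynomials identifies $\widetilde{U}_i/H_i$ with a semi-algebraic subset of some $\mathbb{R}^N$ carrying semi-algebraic strata, and \L{}ojasiewicz's triangulation theorem for semi-algebraic sets gives the same conclusion.

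The global triangulation is then built by induction on the dimension of strata, beginning with the deepest stratum $\mathcal{O}_{\mathrm{deep}}$ and extending outward one codimensional layer at a time, using the local triangulations within each chart together with standard PL techniques (barycentric subdivision, collaring of the already-triangulated subcomplex, relative simplicial approximation) to extend across the next stratum. The main obstacle is precisely this extension on chart overlaps, since independently chosen local triangulations on $U_i$ and $U_j$ need not agree on $U_i\cap U_j$. The cleanest way to finesse the gluing is to observe that the canonical stratification is a Whitney stratification---in linear charts this is immediate from the linear-subspace structure of the isotropy strata, and $H_i$-invariance passes the Whitney conditions to the quotient---and then to invoke the general theorem that any Whitney stratified space admits a compatible triangulation (Goresky), which subsumes all the PL gluing into a single appeal.
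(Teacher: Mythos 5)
The paper offers no proof of this theorem---it is imported from Choi's book with a bare citation---so there is no internal argument to compare yours against; what you have written is a genuine proof sketch where the paper has none. Your route is the standard one and is essentially sound: pass to linear charts (item (v) of the paper's list of consequences of the definition), note that the isotropy strata pull back to a finite arrangement of linear subspaces of $\widetilde{U}_i$, triangulate each local quotient compatibly (Illman's equivariant triangulation, or the Hilbert--Weyl embedding of $\widetilde{U}_i/H_i$ into some $\mathbb{R}^N$ followed by semialgebraic triangulation), and then globalize through the theory of triangulations of stratified spaces. The one step that does not literally parse as written is the final appeal: Whitney's conditions (a) and (b) are defined for strata of a subset of an ambient smooth manifold, and $|\mathcal{O}|$ comes with no such ambient structure, so the phrase ``$H_i$-invariance passes the Whitney conditions to the quotient'' needs unpacking. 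Two repairs are available: either embed each $\widetilde{U}_i/H_i$ via a Hilbert map (a generating set of $H_i$-invariant polynomials) and verify Whitney regularity of the orbit-type strata of the image---this is known---and then observe that a space locally Whitney stratified in compatible charts is a Thom--Mather abstract stratified space, to which the Goresky--Verona triangulation theorems apply; or bypass Whitney regularity entirely and carry out the chart-by-chart gluing by the codimension induction you describe, accepting that this is the technical heart of the proof rather than ``standard PL techniques'' to be waved at. A last small point in your favor: the canonical stratification of the paper is by connected components of isomorphism classes of local groups, which is coarser than the orbit-type decomposition your local triangulations respect; since open cells are connected, compatibility with the finer locally finite partition implies compatibility with the coarser one, so no harm is done, but the reduction deserves a sentence.
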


For each cell $\tau$ of $T$, denote $n_\tau=|\Gamma_x|$ for some (hence any) $x$ in the interior of $\tau$. When $\mathcal{O}$ is compact, $T$ can be taken to be finite, and so the \textit{orbifold Euler characteristic of $\mathcal{O}$}
$$\chiorb(\mathcal{O}):=\sum_{\tau\in T}\frac{(-1)^{\dim\tau}}{n_\tau},$$
is well defined. Notice that $\chiorb(\mathcal{O})$ is, in general, a \emph{rational} number. Given the compatibility of the triangulation with the stratification, this can also be written in the more invariant form
$$\chiorb(\mathcal{O})=\sum_\Gamma \frac{\chi(\Sigma_\Gamma)}{|\Gamma|},$$
where the Euler characteristic $\chi(\Sigma_\Gamma)$ of the non-closed manifold $\Sigma_\Gamma$ is understood to be $\chi(\breve{\Sigma}_\Gamma)-1$, for $\breve{\Sigma}_\Gamma$ the one-point compactification of $\Sigma_\Gamma$.

\begin{proposition}
Let $\rho:\widehat{\mathcal{O}}\to\mathcal{O}$ be an $r$-sheeted orbifold covering. Then $\chiorb(\widehat{\mathcal{O}})=r\chiorb(\mathcal{O})$.
\end{proposition}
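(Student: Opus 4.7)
The plan is to lift a suitable compatible triangulation of $\mathcal{O}$ through $\rho$ and match Euler-characteristic contributions cell by cell. First, I would start from a compatible triangulation $T$ of $\mathcal{O}$ (existing by the previous theorem) and, by iterated barycentric subdivision, make it fine enough that every closed cell $\tau$ is contained in some evenly covered orbifold chart $U\cong\widetilde{U}/H$ of $\mathcal{O}$, so that $|\rho|^{-1}(U)=\bigsqcup_i V_i$ with $V_i\cong\widetilde{U}/H_i$ and $H_i<H$ as in the definition of an orbifold covering. Picking $x\in\tau^\circ$ in the stratum $\Sigma_\Gamma$ and a lift $\tilde{x}\in\widetilde{U}$ with $H_{\tilde{x}}=\Gamma$, the preimages of $x$ in $V_i$ are in bijection with the double cosets $[g]\in H_i\backslash H/\Gamma$, and the local group at the $[g]$-th preimage equals $H_i\cap g\Gamma g^{-1}$, which is constant as $x$ varies along $\tau^\circ$. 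Lifting $\tau$ one double coset at a time produces cells $\hat\tau_{i,[g]}$ of dimension $\dim\tau$ on $\widehat{\mathcal{O}}$ whose disjoint union is $|\rho|^{-1}(\tau)$, and collecting these over all $\tau\in T$ yields a compatible triangulation $\hat{T}$ of $\widehat{\mathcal{O}}$.

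Second, I would establish the local counting identity
$$\sum_{\hat\tau\text{ over }\tau}\frac{1}{N_{\hat\tau}}=\frac{r}{N_\tau}.$$
The number of sheets $r$ is the number of preimages of any regular point $x_0\in U$, which can be computed by partitioning a free $H$-orbit of size $|H|$ in $\widetilde{U}$ into $H_i$-orbits in each $V_i$, giving $r=\sum_i |H|/|H_i|$. For the singular side, the orbit--stabilizer theorem applied to the left $H_i$-action on $H/\Gamma$ shows that the orbit of $g\Gamma$ has size $|H_i|/|H_i\cap g\Gamma g^{-1}|$; since these orbits partition $H/\Gamma$, summing over $[g]$ yields $\sum_{[g]}1/|H_i\cap g\Gamma g^{-1}|=|H|/(|\Gamma||H_i|)$. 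Summing the latter over $i$ and inserting the expression for $r$ produces exactly $r/|\Gamma|=r/N_\tau$.

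Putting the pieces together, since $\dim\hat\tau=\dim\tau$ for every cell over $\tau$, reorganizing the defining sum by fibers of $\rho$ gives
$$\chiorb(\widehat{\mathcal{O}})=\sum_{\hat\tau\in\hat{T}}\frac{(-1)^{\dim\hat\tau}}{N_{\hat\tau}}=\sum_{\tau\in T}(-1)^{\dim\tau}\sum_{\hat\tau\text{ over }\tau}\frac{1}{N_{\hat\tau}}=r\,\chiorb(\mathcal{O}).$$
The main obstacle I expect is the triangulation-refinement step: one must simultaneously refine $T$ so that every closed cell sits in an evenly covered chart, and then verify that the lifted cells in $\widehat{\mathcal{O}}$ assemble into a triangulation whose stratification-compatibility is inherited from that of $T$. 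Once the triangulation is in hand, the remaining arithmetic is pure orbit counting with double cosets.
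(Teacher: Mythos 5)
Your argument is correct and follows essentially the same route as the paper: refine a compatible triangulation so each cell lies over an evenly covered chart, establish the fiberwise identity $\sum_{\hat\tau\text{ over }\tau}1/N_{\hat\tau}=r/N_\tau$, and pass to the alternating sum. Your double-coset/orbit–stabilizer computation is precisely the verification the paper compresses into ``by using charts one verifies,'' so you have simply supplied the details of the same proof.
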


\begin{proof}
By using charts one verifies that, for each $x\in\Sigma_\alpha\subset\Sigma_\Gamma$, the inverse image $|\rho|^{-1}(x)$ is a set $\{\tilde{x}_1,\dots,\tilde{x}_\ell\}$ with each $\tilde{x}_i$ in a respective stratum $\Sigma_i\subset\Sigma_{\Lambda_i}$ of $\widehat{\mathcal{O}}$ so that
$$r=\sum_{i=1}^\ell\frac{|\Gamma|}{|\Lambda_i|}.$$
Therefore, the inverse image of a cell $\tau$ with local group $\Gamma(\tau)$ is a union of cells $\tau_1,\dots,\tau_\ell$ such that
$$\frac{r}{|\Gamma(\tau)|}=\sum_{i=1}^\ell\frac{1}{|\Lambda(\tau_i)|}.$$
Passing to the alternating sum over all cells we get the result.
\end{proof}

\begin{exercise}
Prove that for a compact, orientable $2$-orbifold $\mathcal{O}$ with cone points of order $p_i$ one has
$$\chiorb(O)=\chi(|\mathcal{O}|)-\sum_{i}\left(1-\frac{1}{p_i}\right).$$
\end{exercise}

\section{Higher homotopy groups and (co)homology}\label{section: orbifold cohomology and homotopy}

As we saw above, although the definition of $\piorb(\mathcal{O})$ via homotopies of $\mathscr{H}_{\mathcal{O}}$-loops is intuitive, it is difficult to adapt it to higher order homotopy groups. There is a way around this, if we are willing to sacrifice some of that intuition, which we will briefly comment here following \cite[Section 4.3]{boyer}. This approach consists of defining those groups via the Borel construction of the orthonormal frame bundle $\mathcal{O}^\Yup$, which will also lead us to a notion of orbifold homology and cohomology groups that are sensitive to the algebraic information contained in the orbifold structure of $\mathcal{O}$ (in contrast to De Rham cohomology, as Theorem \ref{theorem: Satake} shows). In what follows we will need to choose a Riemannian metric on $\mathcal{O}$ in order to consider $\mathcal{O}^\Yup$. We define these concepts precisely in Chapter \ref{section: riemannian orbifolds} but we have preferred to include the present section here since this chapter is dedicated to algebraic topology of orbifolds\footnote{In \cite{haefliger3} there is an equivalent construction of $B\mathcal{O}$ that does not rely on a Riemannian metric.}.

Choose a Riemannian metric on $\mathcal{O}$ (see Section \ref{section:riemannian metrics on orbifolds}) and let $E\mathrm{O}(n)\to B\mathrm{O}(n)$ be the universal principal $\mathrm{O}(n)$-bundle (see, for instance, \cite[Section 4.11]{husemoller}). The \textit{classifying space} of $\mathcal{O}$ is the space
$$B\mathcal{O}:=\mathcal{O}^\Yup\times_{\mathrm{O}(n)} E\mathrm{O}(n).$$

If we consider $\mathcal{O}$ as a Lie groupoid $\mathrm{G}_\mathcal{O}$ (see Remark \ref{remark: orbifolds as groupoids}), we can write simply $\mathcal{O}^\Yup=U_{\mathcal{O}}^\Yup/\mathrm{G}_{\mathcal{O}}$, where $U_{\mathcal{O}}$, as in Example \ref{example: orbifold associated pseudogroup}, coincides with the space of objects of $\mathrm{G}_\mathcal{O}$. Define
$$E\mathcal{O}:=U_{\mathcal{O}}^\Yup \times_{\mathrm{O}(n)} E\mathrm{O}(n).$$
The $\mathrm{G}_{\mathcal{O}}$-action on $U_{\mathcal{O}}^\Yup$ commutes with the action of $\mathrm{O}(n)$, so $\mathrm{G}_{\mathcal{O}}$ acts on both the total space $E\mathcal{O}$ (the action on the factor $E\mathrm{O}(n)$ being trivial) and the base space of the fiber bundle $E\mathcal{O}\to U_{\mathcal{O}}$. Upon quotienting by these actions one obtains a commutative diagram
$$\xymatrix{
E\mathcal{O} \ar[r] \ar[d] & B\mathcal{O} \ar[d]^p\\
U_{\mathcal{O}} \ar[r] & |\mathcal{O}|.
}$$

A fiber of $p$ over a regular point of $|\mathcal{O}|$ is $E\mathrm{O}(n)$, while a fiber over a singular point $x$ is an Eilenberg--MacLane space $K(\Gamma_x,1)$, that is, a connected topological space satisfying $\pi_1(K(\Gamma_x,1))\cong \Gamma_x$ and $\pi_i(K(\Gamma_x,1))=0$ for all $i> 1$.

\begin{example}[{\cite[Exemple 4.2.5]{haefliger3}}]
Let $\mathcal{O}$ be a $2$-dimensional orbifold, with $|\mathcal{O}|$ homeomorphic to a compact, oriented surface $S$ of genus $g$, such that $\mathcal{O}_{\mathrm{sing}}$ consists of cone points $x_1,\dots x_k$, that is, $\Gamma_{x_i}\cong \mathbb{Z}_{n_i}$, acting by rotations of order $n_i$ on a disk. The homotopy type of $B\mathcal{O}$ can be described as follows: to the wedge sum $\bigvee_i B\mathbb{Z}_{n_i}$, we attach $S\setminus B$, where $B$ is a small open ball, in such a way that the gluing map $f$ sends the boundary $\partial(S\setminus B)$ to the homotopy class of the sum of the generators of $\pi_1(B\mathbb{Z}_{n_i})$. That is,
$$B\mathcal{O}\simeq S \sqcup_f \bigvee_i B\mathbb{Z}_{n_i}.$$
\end{example}

The \textit{orbifold homology, cohomology and homotopy groups} of $\mathcal{O}$ are, by definition, the homology, cohomology and homotopy groups of $B\mathcal{O}$, that we denote, respectively, by $H_i^{\mathrm{orb}}(\mathcal{O},R)$, $H^i_{\mathrm{orb}}(\mathcal{O},R)$ and $\pi_i^{\mathrm{orb}}(\mathcal{O})$, for some unital ring $R$. This definition of $\pi_1^{\mathrm{orb}}(\mathcal{O})$ is equivalent to the one in Section \ref{section: piorb} via homotopy of $\mathscr{H}_{\mathcal{O}}$-loops (see \cite{haefliger4}). In \cite{moerdijk3} the authors obtain a simplicial version of orbifold cohomology which agrees with the above definition. That is, for each $\mathcal{O}$ they construct a simplicial set whose cohomology is isomorphic to $H^*_{\mathrm{orb}}(\mathcal{O},R)$ (even with local coefficients). A diffeomorphism $f:\mathcal{O}\to\mathcal{P}$ induces a diffeomorphism $B\mathcal{O}\to B\mathcal{P}$ (see \cite[Proposition 4.3.10]{boyer}), so these groups are invariants of the orbifold structure. Although the orbifold homology and cohomology groups are in fact sensitive to the orbifold structure, the study of the Leray spectral sequence of $p$ leads to the following result which shows that this information is contained in the torsion of these groups.

\begin{theorem}[{\cite[Proposition 4.2.3]{haefliger3}}{\cite[Corollary 4.3.8]{boyer}}]\label{theorem: cohomology of orbifolds over a field}
If $R$ is a field, then we have $H_i^{\mathrm{orb}}(\mathcal{O},R)\cong H_i(|\mathcal{O}|,R)$ and $H^i_{\mathrm{orb}}(\mathcal{O},R)\cong H^i(|\mathcal{O}|,R)$.
\end{theorem}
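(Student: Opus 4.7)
The plan is to exploit the map $p\colon B\mathcal{O}\to|\mathcal{O}|$ introduced in the commutative diagram just before the theorem, and to run its Leray spectral sequence. Since the orbifold cohomology/homology with coefficients in $R$ are by definition the (co)homology of the total space $B\mathcal{O}$, it is enough to show that $p$ induces isomorphisms $H^i(B\mathcal{O},R)\cong H^i(|\mathcal{O}|,R)$ and $H_i(B\mathcal{O},R)\cong H_i(|\mathcal{O}|,R)$.

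First I would identify the fibers of $p$, which was already done in the excerpt: over a regular point the fiber is the contractible space $E\mathrm{O}(n)$, while over a singular point $x$ the fiber is an Eilenberg--MacLane space $K(\Gamma_x,1)$. The next step is to compute the stalks of the Leray sheaves $\mathcal{H}^q(p;R)$ on $|\mathcal{O}|$. By the above identification, the stalk at a regular point is $R$ for $q=0$ and vanishes for $q>0$, while the stalk at a singular point $x$ is the group cohomology $H^q(\Gamma_x,R)$. The crucial input is that for a finite group $\Gamma$ and a field $R$ whose characteristic does not divide $|\Gamma|$ (in particular for any field of characteristic zero), the standard transfer/averaging argument shows that $|\Gamma|\cdot\mathrm{id}=0$ on $H^q(\Gamma,R)$ for $q>0$, so $H^q(\Gamma,R)=0$ for $q>0$ and $H^0(\Gamma,R)=R$. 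This is the point where the ``$R$ is a field'' hypothesis is used (implicitly: $R$ must have characteristic coprime to every $|\Gamma_x|$, which is automatic if $R$ has characteristic $0$, and in any case holds sufficiently generically to be the intended hypothesis).

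Consequently the Leray sheaf $\mathcal{H}^0(p;R)$ is the constant sheaf $R$ on $|\mathcal{O}|$ (there is no monodromy since the fibers are $0$-connected in the relevant sense and the stalk of $\mathcal{H}^0$ is uniformly $R$), and $\mathcal{H}^q(p;R)=0$ for $q>0$. Plugging this into the Leray spectral sequence
$$E_2^{p,q}=H^p(|\mathcal{O}|,\mathcal{H}^q(p;R))\Longrightarrow H^{p+q}(B\mathcal{O},R)$$
one sees that only the row $q=0$ survives, so the sequence collapses at $E_2$ and yields the edge isomorphism $H^i(|\mathcal{O}|,R)\cong H^i(B\mathcal{O},R)=H^i_{\mathrm{orb}}(\mathcal{O},R)$. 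The statement for homology is obtained by running the Leray--Serre homology spectral sequence for $p$ in exactly the same way, or by invoking the universal coefficient theorem over the field $R$.

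The main obstacle is the verification that the fiberwise cohomology computation really patches into a constant sheaf on $|\mathcal{O}|$, since $p$ is not a locally trivial fibration (the fiber type jumps across strata of the canonical stratification). This is handled by working on a compatible open cover subordinate to the stratification and checking that the adjacency maps between stalks of $\mathcal{H}^q(p;R)$ are the maps induced by the injections $\Gamma_y\hookrightarrow\Gamma_x$ when $y$ lies in the closure of the stratum of $x$; once these induced maps land in trivial groups (for $q>0$) or are identities on $R$ (for $q=0$), the sheaf structure is forced. Everything else is a formal application of the spectral sequence machinery.
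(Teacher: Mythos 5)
Your argument is correct and is exactly the route the paper indicates and the cited references follow: run the Leray spectral sequence of $p\colon B\mathcal{O}\to|\mathcal{O}|$, identify the stalks of the Leray sheaves with $H^q(\Gamma_x,R)$, kill the positive-degree stalks by the transfer argument, and collapse at $E_2$. Your parenthetical caveat is in fact essential rather than optional: as literally stated the theorem fails for a field whose characteristic divides some $|\Gamma_x|$ (e.g.\ for the order-$2$ cone one has $B\mathcal{O}\simeq B\mathbb{Z}_2$, whose $\mathbb{F}_2$-cohomology is nonzero in every degree while $|\mathcal{O}|$ is contractible), so the intended hypothesis is $\operatorname{char}R=0$ or $\operatorname{char}R$ coprime to all local group orders, which is what your proof actually establishes.
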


\chapter{Differential geometry of orbifolds}\label{section: diff geometry}

In this chapter we will see that much of the elementary objects and constructions from the differential topology and geometry of manifolds---beginning with the very notion of the differential of a smooth map, as a map between tangent (orbi)bundles, and including e.g. differential forms, integration, Stokes' theorem and De Rham cohomology---generalize to orbifolds. The presentation here follows mostly \cite{kleiner}, \cite{chenruan} and \cite{galazgarcia}.

\section{Tangent orbibundle and differentials}\label{section: tangent bundle}

Let $(\widetilde{U},H,\varphi)$ be an orbifold chart and consider the tangent bundle $T\widetilde{U}$. Since we have a smooth action, say $\mu$, of $H$ on $\widetilde{U}$, we can define a smooth $H$-action on $T\widetilde{U}$ by $h(\tilde{x},v)=(\mu(h,\tilde{x}),\dif(\mu^h)_{\tilde{x}}v)$. This gives us an orbifold chart $(T\widetilde{U},H,\pi)$, where $\pi$ is the quotient projection over $TU:=T\widetilde{U}/H$. Notice that the foot projection $T\widetilde{U}\to\widetilde{U}$ is equivariant, hence induces a projection $|p|:TU\to U\cong \widetilde{U}/H$. For $x=\varphi(\tilde{x})$ we have
$$|p|^{-1}(x)=\{H(z,v)\ |\ z=\tilde{x}\}\subset TU.$$
We claim that $|p|^{-1}(x)\cong T_{\tilde{x}}\widetilde{U}/\Gamma_{x}$ (recall: $\Gamma_x:=H_{\tilde{x}})$. In fact, we have $H(\tilde{x},v)=H(\tilde{x},w)$ if and only if there exists $h\in H$ such that $h(\tilde{x},v)=(\tilde{x},w)$, which happens if and only if $h\in H_{\tilde{x}}$ and $\dif(\mu^h)_{\tilde{x}}v=w$, which in turn is equivalent to $H_{\tilde{x}}v=H_{\tilde{x}}w$. So $H(\tilde{x},v)\mapsto\Gamma_xv$ is a well defined bijection $|p|^{-1}(x)\to T_{\tilde{x}}\widetilde{U}/\Gamma_{x}$ which is clearly continuous. Its inverse is the inclusion $T_{\tilde{x}}\widetilde{U}/\Gamma_{x}\to TU$, establishing the claim.

For an orbifold $\mathcal{O}$ with atlas $(\widetilde{U}_i,H_i,\varphi_i)$ we now glue the charts $(T\widetilde{U}_i,H_i,\pi_i)$ (with gluing maps given by the chart embeddings, as we did in Example \ref{example: doubling mirrors}) to obtain an orbifold $T\mathcal{O}$, the \textit{tangent bundle} of $\mathcal{O}$, which has
$$|T\mathcal{O}|=\frac{\displaystyle\bigsqcup (T\widetilde{U}_i/H_i)}{\sim}.$$
The projection $|p|$ locally lifts to $(\tilde{x},v)\to \tilde{x}$ (which is $\mathrm{id}_H$-equivariant), so it defines a smooth orbifold map. Let us summarize this.

\begin{proposition}[Tangent bundle]
The tangent bundle $T\mathcal{O}$ of an $n$-dimensional orbifold $\mathcal{O}$ is a $2n$-dimensional orbifold and the projection $p:T\mathcal{O}\to\mathcal{O}$ is a smooth orbifold map.
\end{proposition}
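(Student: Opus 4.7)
The plan is to check that the collection $\{(T\widetilde{U}_i,H_i,\pi_i)\}$ constructed in the paragraphs preceding the statement really defines a $2n$-dimensional orbifold atlas on $|T\mathcal{O}|$, and then to verify the smoothness condition for $p$ in the sense of Section \ref{section: smooth maps between orbifolds}. Most of the geometric content of the proof is already in the construction; what remains is a checklist against the orbifold axioms.

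First I would confirm that each $(T\widetilde{U}_i,H_i,\pi_i)$ is an honest $2n$-dimensional orbifold chart. Since $\widetilde{U}_i\subset\mathbb{R}^n$ is open and connected, $T\widetilde{U}_i\cong\widetilde{U}_i\times\mathbb{R}^n$ is open and connected in $\mathbb{R}^{2n}$. The lifted action $h\cdot(\tilde{x},v)=(\mu^h(\tilde{x}),\dif(\mu^h)_{\tilde{x}}v)$ is smooth as the differential of a smooth action, and it is effective: if $h$ acts trivially on $T\widetilde{U}_i$, then restricting to the zero section yields $h\tilde{x}=\tilde{x}$ for every $\tilde{x}\in\widetilde{U}_i$, whence $h=e$ by effectiveness of the $H_i$-action on $\widetilde{U}_i$. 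The map $\pi_i$ is continuous and $H_i$-invariant, and the fibrewise bijection $|p|^{-1}(x)\cong T_{\tilde{x}}\widetilde{U}_i/\Gamma_x$ established immediately before the statement upgrades to a homeomorphism $T\widetilde{U}_i/H_i\cong TU_i$.

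Next I would verify local compatibility of the tangent charts. Given $Hx_1\in TU_1\cap TU_2$ with base point $x\in U_1\cap U_2$, apply compatibility of the base atlas to obtain a chart $(\widetilde{U}_3,H_3,\phi_3)$ for a small neighborhood of $x$ together with embeddings $\lambda_j:(\widetilde{U}_3,H_3,\phi_3)\hookrightarrow(\widetilde{U}_j,H_j,\phi_j)$ for $j=1,2$. The differentials $\dif\lambda_j:T\widetilde{U}_3\hookrightarrow T\widetilde{U}_j$ are smooth embeddings, and the identity $\phi_j\circ\lambda_j=\phi_3$ passes to $\pi_j\circ\dif\lambda_j=\pi_3$ by the commutativity of tangent and quotient constructions. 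Hence $(T\widetilde{U}_3,H_3,\pi_3)$ is the required compatible chart around $Hx_1$. Hausdorffness and paracompactness of $|T\mathcal{O}|$ follow from the corresponding properties of $|\mathcal{O}|$ together with the fact that each fiber $T_{\tilde{x}}\widetilde{U}/\Gamma_x$ is itself Hausdorff: two distinct classes in $|T\mathcal{O}|$ either project to distinct points in $|\mathcal{O}|$, which can be separated downstairs and pulled back, or lie in the same fiber, where they are separated inside a single chart.

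For the smoothness statement on $p$, I would simply observe that in each pair of charts $p$ admits the local lift $\tilde{p}_i:T\widetilde{U}_i\to\widetilde{U}_i$, $(\tilde{x},v)\mapsto\tilde{x}$, paired with the identity homomorphism $\overline{p}_i=\id:H_i\to H_i$; the equivariance $\tilde{p}_i(h(\tilde{x},v))=h\tilde{x}=\id(h)\tilde{p}_i(\tilde{x},v)$ is immediate, and the square $\phi_i\circ\tilde{p}_i=|p|\circ\pi_i$ commutes by construction of $|p|$. The most delicate point in the whole argument, in my view, is not any single verification but the covariance under the embeddings used in gluing — i.e. that the tangent charts assemble into a single equivalence class of atlases independently of the choice of base atlas in $[\mathcal{A}]$ — which one handles by noting that any common refinement of base atlases lifts, via differentials, to a common refinement of the associated tangent atlases.
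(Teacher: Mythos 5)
Your proposal is correct and follows essentially the same route as the paper, which presents the proposition as a summary of the preceding construction: the lifted $H_i$-action on $T\widetilde{U}_i$, the fiber identification $|p|^{-1}(x)\cong T_{\tilde{x}}\widetilde{U}/\Gamma_x$, gluing via the differentials of chart embeddings, and the local lift $(\tilde{x},v)\mapsto\tilde{x}$ of $|p|$. You merely make explicit a few verifications the paper leaves implicit (effectiveness of the lifted action, Hausdorffness, independence of the base atlas), which is a reasonable completion rather than a different argument.
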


The \textit{tangent space} at $x=\varphi(\tilde{x})$, denoted $T_x\mathcal{O}$, is the vector space $T_{\tilde{x}}\widetilde{U}$ together with the induced linear action of $\Gamma_x$. The \textit{tangent cone} at $x$ is $C_x|\mathcal{O}|:=T_{\tilde{x}}\widetilde{U}/\Gamma_x$, which, as we saw, coincides with the fiber $|p|^{-1}(x)$ (see Figure \ref{tangentcone}). A \textit{tangent vector} is an equivalence class $[v]\in C_x|\mathcal{O}|$ (when there is no risk of confusion we will omit the brackets and write just $v\in C_x|\mathcal{O}|$).

\begin{figure}
\centering{
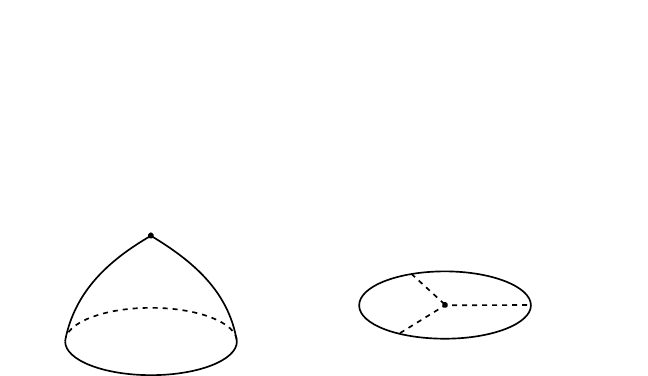}
\caption{Tangent space and tangent cone.}
\label{tangentcone}
\end{figure}

We can now define the differential of a smooth map $f:\mathcal{O}\to\mathcal{P}$. A local lift $\tilde{f}_x:(\widetilde{U},\Gamma_x,\phi)\to(\widetilde{V},\Gamma_{|f|(x)},\psi)$, defines an $\overline{f}_x$-equivariant map $\dif\tilde{f}_x:T_{\tilde{x}}\widetilde{U}\to T_{\tilde{f}_x}\widetilde{V}$ between the induced $T\mathcal{O}$ and $T\mathcal{P}$ charts. This gives a linear map $(\dif f_x,\overline{f}_x):T_x\mathcal{O}\to T_{f(x)}\mathcal{P}$, the \textit{differential of $\f$ at $x$}, and defines a smooth bundle map $\dif f:T\mathcal{O}\to T\mathcal{P}$, the \textit{differential} of $f$.

A \textit{vector field} on $\mathcal{O}$ is a section of $T\mathcal{O}$, that is, a smooth map $X:\mathcal{O}\to T\mathcal{O}$ such that $p\circ X=\mathrm{id}_{\mathcal{O}}$. We denote the $C^{\infty}(\mathcal{O})$-module of the smooth vector fields in $\mathcal{O}$ by $\mathfrak{X}(\mathcal{O})$. In terms of a chart $(\widetilde{U},H,\phi)$ and the induced chart for $TU$, a vector field $X$ defines an $H$-invariant vector field $\widetilde{X}$ in $\widetilde{U}$. In fact, there is a natural correspondence between vector fields on $\mathcal{O}$ and $\mathscr{H}_{\mathcal{O}}$-invariant vector fields on $U_\mathcal{O}$. Hence, by naturality $\mathfrak{X}(\mathcal{O})$ is closed under Lie brackets. For a vector field $X\in\mathfrak{X}(\mathcal{O})$ with an isolated zero at $x\in\mathcal{O}$, the \textit{index} of $X$ at $x$ is defined as
$$\ind_x(X):=\frac{1}{|\Gamma_x|}\ind_{\tilde{x}}(\widetilde{X}).$$
There is the following orbifold version of the Poincaré--Hopf index theorem due to I.~Satake.

\begin{theorem}[Poincaré--Hopf theorem for orbifolds {\cite[Theorem 3]{satake2}}]
Let $\mathcal{O}$ be a compact orbifold and $X\in\mathfrak{X}(\mathcal{O})$ a vector field with isolated zeros at $x_1,\dots,x_k$. Then
$$\chiorb(\mathcal{O})=\sum_{i=1}^k\ind_{x_i}(X).$$
\end{theorem}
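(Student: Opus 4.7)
The plan is to run the manifold Poincaré--Hopf argument, upgraded equivariantly, in two independent steps: first produce a particular vector field $Y\in\mathfrak{X}(\mathcal{O})$ whose weighted index sum equals $\chiorb(\mathcal{O})$ tautologically from the definition, and then show that the sum $\sum_i \ind_{x_i}(X)$ is independent of $X$ for any vector field with isolated zeros. Since $\chiorb$ is defined combinatorially from a compatible triangulation, the natural candidate for $Y$ is the gradient-like vector field of a triangulation-adapted Morse-type function.

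Concretely, I would fix a compatible triangulation $T$ of $\mathcal{O}$ and pass to its first barycentric subdivision. In each linear chart $(\widetilde U,H,\phi)$ I choose an $H$-invariant smooth function having a non-degenerate critical point of Morse index $\dim\tau$ at every barycenter $b_\tau$ contained in $\widetilde U$, and glue the associated gradient vector fields via an $\mathscr{H}_{\mathcal{O}}$-invariant partition of unity to obtain $Y\in\mathfrak{X}(\mathcal{O})$ whose zero set is exactly the set of barycenters. At $b_\tau$ the lifted vector field $\widetilde Y$ has the classical local index $(-1)^{\dim\tau}$, and because the interior of $\tau$ lies in a single stratum by compatibility, $|\Gamma_{b_\tau}|=N_\tau$. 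Consequently
$$\sum_{\tau\in T}\ind_{b_\tau}(Y)=\sum_{\tau\in T}\frac{(-1)^{\dim\tau}}{N_\tau}=\chiorb(\mathcal{O}).$$

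The remaining step is the invariance of the weighted index sum. I would connect two vector fields $X_0,X_1\in\mathfrak{X}(\mathcal{O})$ with isolated zeros by a generic smooth homotopy $\{X_t\}_{t\in[0,1]}$; the zero set of the induced field on $\mathcal{O}\times[0,1]$ is a $1$-dimensional suborbifold whose boundary is the disjoint union of the zero sets of $X_0$ and $X_1$. Along a connected branch the local group $\Gamma_{x(t)}$ stays constant, so $\ind_{x(t)}(X_t)$ is locally constant in $t$; at bifurcation points where zeros are born or die, one checks in a chart, using the $H$-equivariant version of the standard bifurcation analysis, that they come in $H$-orbits of pairs whose signed indices cancel, and the resulting contributions to the weighted sum are zero. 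Both endpoints then yield the same $\sum_i\ind_{x_i}(X_k)$, and the model computation above identifies this common value with $\chiorb(\mathcal{O})$.

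The main obstacle is the invariance step at singular points. The zeros of the equivariant lift $\widetilde X_t$ may jump between fixed-point strata of varying dimensions as $t$ varies, so I have to verify that collisions and births of zeros respect the $H$-action and that the weights $1/|\Gamma|$ behave correctly under collisions between orbits of different sizes. The cleanest way is probably to restrict $\widetilde X_t$ to the fixed-point set of each subgroup $K<H$, which is an invariant submanifold by equivariance, apply the classical manifold invariance there, and reconstruct the equivariant statement from an inclusion--exclusion over the subgroup lattice of $H$; this is the one point where the paper's abstention from non-effective chart groups noticeably simplifies the bookkeeping.
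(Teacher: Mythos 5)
The paper offers no proof of this theorem; it is quoted from Satake, whose argument obtains the index formula from the Gauss--Bonnet theorem via the classical transgression of the Euler form. So your proposal can only be judged on its own terms. Your first half --- a model vector field adapted to a compatible triangulation whose weighted index sum is $\chiorb(\mathcal{O})$ by inspection --- is the right idea and essentially works: the interior of each cell $\tau$ lies in one stratum, the tangent space of that stratum at the barycenter is $\Gamma_{b_\tau}$-fixed and has dimension at least $\dim\tau$, so an invariant nondegenerate critical point of Morse index $\dim\tau$ exists and contributes $(-1)^{\dim\tau}/N_\tau$. The one flaw here is the gluing: a convex combination $\sum_i\xi_i\nabla f_i$ of gradient fields can vanish where none of the summands do, so a partition of unity does not control the zero set; you need a single globally defined $\mathscr{H}_{\mathcal{O}}$-invariant Morse-like function adapted to the barycentric subdivision (or the explicit simplicial field, checked to be invariant). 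This is standard and fixable.

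The genuine gap is the invariance step, which is the real content of the theorem, and the mechanism you propose is incorrect. Equivariant perturbations are not generic perturbations, so ``the zero set of a generic homotopy is a $1$-dimensional suborbifold'' requires a stratumwise argument; worse, the claims that $\Gamma_{x(t)}$ is constant along a branch and that births and deaths cancel in $H$-orbits of pairs both fail, because an orbit of regular zeros can collide with a zero lying on the singular set. Take $H=\mathbb{Z}_2$ acting on $\mathbb{R}$ by $x\mapsto-x$ and the invariant family $\widetilde{X}_t(x)=x(x^2-t)$: for $t>0$ the quotient $[0,\infty)$ carries a regular zero of weight $1$ and index $+1$ plus a singular zero of weight $1/2$ and index $-1$, while for $t<0$ there is a single singular zero of weight $1/2$ and index $+1$. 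The weighted sum is conserved, but the regular branch terminates on the singular one and the index of the latter jumps --- nothing cancels in pairs. The correct local conservation law, which replaces both your branch analysis and the inclusion--exclusion over subgroups, is that the weighted index sum of all zeros of $X$ in a chart $U=\widetilde{U}/H$ equals $\tfrac{1}{|H|}$ times the total classical index of $\widetilde{X}$ in $\widetilde{U}$ (the degree of $\widetilde{X}/\|\widetilde{X}\|$ on the boundary of a slightly shrunken $\widetilde{U}$): a zero $y$ with local group $\Lambda_y<H$ accounts for an $H$-orbit of $|H|/|\Lambda_y|$ upstairs zeros of equal index, so the factor $1/|\Lambda_y|$ in the definition of $\ind_y$ is exactly $\tfrac{1}{|H|}\cdot\tfrac{|H|}{|\Lambda_y|}$. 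Since the upstairs degree is a homotopy invariant of $\widetilde{X}$ restricted to the boundary, the weighted sum over a chart is unchanged under homotopies nonvanishing near $\partial U$, and covering the compact $\mathcal{O}$ by finitely many such charts yields the invariance. With that lemma supplied, your two-step plan does prove the theorem.
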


Notice that not every tangent vector $[v]\in C_x|\mathcal{O}|$ admits an extension to a vector field on a neighborhood of $x$. In fact, only the vectors satisfying $[v]=\{v\}$, that is, the ones represented by vectors in $T_x\mathcal{O}^{\Gamma_x}$ can be extended.

\section{Suborbifolds, immersions and submersions}\label{section: suborbifolds, immersions and submersions}

We say that a smooth map $f:\mathcal{O}\to\mathcal{P}$ is an \textit{immersion} (\textit{submersion}) \textit{at $x\in\mathcal{O}$} if $\dif f_x$ is injective (surjective). Recall are considering $\dif f_x$ as a map $(T_x\mathcal{O},\Gamma_x)\to(T_{f(x)}\mathcal{P},\Gamma_{|f|(x)})$, so this means that there is a local lift $\tilde{f}_x:\widetilde{U}\to\widetilde{V}$ such that \emph{both} $\dif\tilde{f}_x$ and $\overline{f}_x$ are injective (surjective)\footnote{Notice that in case of immersions the injectivity of $\overline{f}_x$ is automatic.}. When $f$ is an immersion (submersion) at each point of $\mathcal{O}$ we say that $f$ is an \textit{immersion} (\textit{submersion}). Many results of the differential topology of manifolds generalize to the orbifold setting, as can be seen in \cite{borzellino2}, \cite{cooper}, \cite{chenruan}, \cite{galazgarcia} and \cite{kleiner}, for example. As an illustration we present the following.

\begin{proposition}[Local form of submersions {\cite[Lemma 2.5]{kleiner}}]\label{proposition: Local form of submersions}
Let $f:\mathcal{O}\to\mathcal{P}$ be a submersion at $x$. Then there is an orbifold $\f$, on which $\Gamma_{|f|(x)}$ acts\footnote{That is, there is an action on the underlying topological space which is smooth as a map between orbifolds. We will see this in more detail in Section \ref{section: actions on orbifolds}.}, and a chart $(\widetilde{V},\Gamma_{|f|(x)},\phi)$ around $|f|(x)$ such that $f$ is equivalent, near $x$, to the projection
$$\f\times\widetilde{V}/\!/\Gamma_{|f|(x)}\longrightarrow\widetilde{V}/\!/\Gamma_{|f|(x)}.$$
\end{proposition}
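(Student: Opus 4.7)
The plan is to apply an equivariant version of the classical local submersion theorem to a smooth lift of $f$ and then repackage the resulting product decomposition as an orbifold quotient. Write $\Gamma:=\Gamma_{|f|(x)}$ and pick compatible orbifold charts $(\widetilde{U},\Gamma_x,\varphi)$ around $x$ and $(\widetilde{V}_0,\Gamma,\psi)$ around $|f|(x)$, with $\tilde{y}\in\widetilde{V}_0$ the unique preimage of $|f|(x)$, together with a local lift $\widetilde{f}_x:\widetilde{U}\to\widetilde{V}_0$ and its associated homomorphism $\overline{f}_x:\Gamma_x\to\Gamma$. By the definition of submersion at $x$, both $\dif\widetilde{f}_x$ and $\overline{f}_x$ are surjective; set $K:=\ker\overline{f}_x\triangleleft\Gamma_x$, so that $\Gamma_x/K\cong\Gamma$. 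Choosing $\Gamma_x$- and $\Gamma$-invariant Riemannian metrics and replacing the charts by linear ones via the exponential maps (as in the construction of linear charts above), I view $\widetilde{f}_x$ as an $\overline{f}_x$-equivariant smooth map $\phi$ from a $\Gamma_x$-invariant ball $B\subset T_{\tilde{x}}\widetilde{U}$ into $T_{\tilde{y}}\widetilde{V}_0$ with $\phi(0)=0$.

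The heart of the argument is an equivariant straightening. Using the metric, split $T_{\tilde{x}}\widetilde{U}=E\oplus W$ orthogonally into $\Gamma_x$-subrepresentations, where $E:=\ker\dif\phi_0$. The restriction $\dif\phi_0|_W:W\to T_{\tilde{y}}\widetilde{V}_0$ is an $\overline{f}_x$-equivariant linear isomorphism; together with $W\cap E=\{0\}$ this forces every $k\in K$ to fix $W$ pointwise, since $\dif\phi_0(kw-w)=0$ puts $kw-w$ in $W\cap E$. Now define
$$\Phi(u,w):=(u,\phi(u,w)),\qquad \Phi:B\longrightarrow E\oplus T_{\tilde{y}}\widetilde{V}_0.$$
Its differential at the origin is a linear isomorphism, so the inverse function theorem makes $\Phi$ a diffeomorphism onto its image on some $\Gamma_x$-invariant neighborhood of $0$. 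The $\overline{f}_x$-equivariance of $\phi$ then immediately yields that $\Phi$ intertwines the $\Gamma_x$-action with the diagonal one on the target (the original representation on $E$ and the action through $\overline{f}_x$ on $T_{\tilde{y}}\widetilde{V}_0$), and in the new coordinates $\widetilde{f}_x$ is literally the second-factor projection.

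To conclude, choose small $\Gamma_x$- and $\Gamma$-invariant balls $\widetilde{F}\subset E$ and $\widetilde{V}\subset T_{\tilde{y}}\widetilde{V}_0$ for which the previous step holds. Since $\Gamma_x$ acts effectively on the product $\widetilde{F}\times\widetilde{V}$ and $K$ acts trivially on the $\widetilde{V}$-factor, $K$ must act effectively on $\widetilde{F}$, so $\mathcal{F}:=\widetilde{F}/\!/K$ is a well-defined orbifold carrying the induced action of $\Gamma\cong\Gamma_x/K$. Iterated quotient then gives
$$(\widetilde{F}\times\widetilde{V})/\!/\Gamma_x\;\cong\;\bigl((\widetilde{F}\times\widetilde{V})/\!/K\bigr)/\!/\Gamma\;\cong\;(\mathcal{F}\times\widetilde{V})/\!/\Gamma,$$
where the last identification uses that $K$ fixes the second factor pointwise, and under it $\widetilde{f}_x$ descends to the desired projection onto $\widetilde{V}/\!/\Gamma$. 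The main obstacle is the equivariant straightening step: one must check simultaneously that $\Phi$ is a local diffeomorphism intertwining the two $\Gamma_x$-actions \emph{and} that $K$ acts trivially on the normal direction $W$, for without the latter the product decomposition would not descend to the claimed $\Gamma$-quotient.
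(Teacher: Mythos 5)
Your proposal is correct and follows essentially the same route as the paper's proof: produce an $\overline{f}_x$-equivariant local product decomposition of the lift, set $\f$ equal to the fiber modulo $\ker\overline{f}_x$, and then quotient first by the kernel and then by $\Gamma_{|f|(x)}$. The only difference is that you prove in detail (via linearization, the splitting $E\oplus W$, and the straightening $\Phi$) the equivariant trivialization that the paper merely asserts ``after reducing $\widetilde{U}$ and $\widetilde{V}$ if necessary,'' which is a welcome addition rather than a deviation.
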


\begin{proof}
Let $\widetilde{f}_x:\widetilde{U}\to\widetilde{V}$ be a local lift of $f$ and consider the fiber $F:=(\widetilde{f}_x)^{-1}(\widetilde{f}_x(\tilde{x}))$, where $\tilde{x}\in\widetilde{U}$ is the lift of $x$. By hypothesis, $\widetilde{f}_x$ is a submersion, so we can suppose, after reducing $\widetilde{U}$ and $\widetilde{V}$ if necessary, that there is an $\overline{f}_x$-equivariant diffeomorphism $\widetilde{U}\to F\times \widetilde{V}$, such that the diagram
$$\xymatrix{
\widetilde{U} \ar[dr]_{\widetilde{f}_x} \ar[r] & F\times\widetilde{V} \ar[d]\\
 & \widetilde{V}}$$
commutes. Notice that $\ker(\overline{f}_x)$ acts on $F$. Setting $\f:=F/\!/\ker(\overline{f}_x)$ we then get the following commuting diagram
$$\xymatrix{
\widetilde{U}/\!/\ker(\overline{f}_x) \ar[dr] \ar[r] & \f\times\widetilde{V} \ar[d]\\
 & \widetilde{V}}$$
of orbifold smooth maps. Now, recalling that $\overline{f}_x$ is surjective, quotienting by $\Gamma_{|f|(x)}$ yields the commutative diagram
$$\xymatrix{
\widetilde{U}/\!/\Gamma_x \ar[dr] \ar[r] & (\f\times\widetilde{V})/\!/\Gamma_{|f|(x)} \ar[d]\\
 & \widetilde{V}/\!/\Gamma_{|f|(x)}}$$
whose horizontal line is a diffeomorphism.
\end{proof}

\begin{exercise}[Inverse function theorem for orbifolds]
Show that if $f:\mathcal{O}\to\mathcal{P}$ is smooth at $x$ and $\dif f_x$ is invertible\footnote{Again, this means both $\dif\tilde{f}_x$ and $\overline{f}_x$ are invertible, for some smooth lift.} then $f$ is an orbifold diffeomorphism near $x$.
\end{exercise}

A point $y\in|\mathcal{P}|$ is called a \textit{regular value} of $f:\mathcal{O}\to\mathcal{P}$ when $f$ is a submersion at every $x\in f^{-1}(y)$. Otherwise $y$ is a \textit{critical value} of $f$. Sard's Theorem directly generalizes to orbifolds:

\begin{theorem}[Sard's theorem for orbifolds {\cite[Theorem 4.1]{borzellino6}}]
The set of critical values of a smooth map $f:\mathcal{O}\to\mathcal{P}$ has measure zero and, hence, the set of regular values of $f$ is dense in $\mathcal{P}$.
\end{theorem}

A \textit{suborbifold} of an orbifold $\mathcal{O}$ is given by an orbifold $\mathcal{S}$ and an immersion $i:\mathcal{S}\to\mathcal{O}$ such that $|i|$ maps $|\mathcal{S}|$ homeomorphically onto its image in $|\mathcal{O}|$. Then from effectiveness $\overline{i}_x:\Gamma_x\to\Gamma_{|i|(x)}$ is injective for each $x\in|\mathcal{S}|$ (and not necessarily surjective). We will identify $\mathcal{S}$ with its image in $\mathcal{O}$. Let $(\widetilde{U},\Gamma_x,\phi)$ be an $\mathcal{O}$-chart around a point $x\in|\mathcal{S}|$ which is regular as a point of $\mathcal{S}$. The \textit{multiplicity of $\mathcal{S}$} as a suborbifold of $\mathcal{O}$ is $m_{\mathcal{S}}=\#\Gamma_x$.

We say that a suborbifold $\mathcal{S}$ is \textit{strong} or \textit{full} (following \cite{galazgarcia} and \cite{borzellino7}) when, for every $x\in\mathcal{S}$ and every $\mathcal{O}$-chart around $x$, the image of a lift $\widetilde{i}_x$ does not depend on the lift. This definition is equivalent to Thurston's definition of suborbifold in \cite{thurston}. That is, if $(\widetilde{U},\Gamma_x,\phi)$ is an $\mathcal{O}$-chart around $x\in\mathcal{S}$, then $\phi^{-1}(U\cap|\mathcal{S}|)$ is a $\Gamma_x$-invariant submanifold of $\widetilde{U}$.

\begin{example}[Open suborbifolds]\label{example: open suborbifolds}
If $U$ is an open subset of the underlying space of an orbifold $\mathcal{O}$ then the restriction of the charts of $\mathcal{O}$ provide a natural orbifold structure on $U$, that we denote $\mathcal{U}=\mathcal{O}|_U$. It is clear that, with the inclusion $i_{\mathcal{U}\mathcal{O}}:\mathcal{U}\to\mathcal{O}$, it is a (strong) suborbifold.
\end{example}

\begin{example}[Slices of products]
For a product orbifold $\mathcal{O}_1\times\mathcal{O}_k$, each slice
$$\{x_1\}\times\dots\times\mathcal{O}_i\times\dots\times\{x_n\}$$
is a strong suborbifold (with the natural inclusion).
\end{example}

Using Proposition \ref{proposition: Local form of submersions} one proves the following.

\begin{theorem}[Preimage theorem for orbifolds {\cite[Theorem 4.2]{borzellino6}}]
Let $\mathcal{O}$ and $\mathcal{P}$ be orbifolds with $\dim\mathcal{O}\geq\dim\mathcal{P}$, let $f:\mathcal{O}\to\mathcal{P}$ be a smooth map and $y\in|\mathcal{P}|$ a regular value of $f$. Then the preimage $\mathcal{S}=f^{-1}(y)$ is a strong suborbifold with $\dim\mathcal{S}=\dim\mathcal{O}-\dim\mathcal{P}$.
\end{theorem}

\section{Orbibundles and Frobenius' theorem}\label{sec: orbibundles}

As in the manifold case, there is a general notion of fiber bundle over an orbifold that allows one to carry over to this setting other usual differential geometric constructions. Let us begin with the general definition. 

Let $\mathcal{E}$ and $\mathcal{B}$ be smooth orbifolds. A smooth map $p:\mathcal{E}\to\mathcal{B}$ is a \textit{fiber orbibundle} if $|p|$ is surjective and there is a third orbifold $\f$ such that, for all $x\in|\mathcal{B}|$, there is an orbifold chart $(\widetilde{U},\Gamma_x,\varphi)$ around $x$, an action of $\Gamma_x$ on $\f\times\widetilde{U}$ and a diffeomorphism $(\f\times\widetilde{U})/\!/\Gamma_x\to\mathcal{E}|_{|p|^{-1}(U)}$ such that
$$\xymatrix{
(\f\times\widetilde{U})/\!/\Gamma_x \ar[r] \ar[d] & \mathcal{E} \ar[d]^{p}\\
\widetilde{U}/\!/\Gamma_x \ar[r] & \mathcal{B}}$$
is commutative\footnote{It is implied here that the $\Gamma_x$-action on $\f\times\widetilde{U}$ extends that on $\widetilde{U}$. As noted in \cite[p. 3]{biringer}, there was a typo in previous versions of this manuscript leading to a wrong interpretation of this definition, I thank the authors for pointing that out.}. A \textit{section} of $p$ is a smooth map $s:\mathcal{B}\to\mathcal{E}$ satisfying $p\circ s=\mathrm{id}_{\mathcal{B}}$.

When $\f$ is a $k$-dimensional vector space $V$ with a linear action of $\Gamma_x$ we say that $p$ is a \textit{vector orbibundle}.

\begin{example}[Coverings]
An orbifold covering $\widehat{\mathcal{O}}\to\mathcal{O}$ can be seen as an orbibundle with discrete fibers.
\end{example}

\begin{example}[Tangent bundle]
It is clear from what we saw in Section \ref{section: tangent bundle} that the tangent bundle $T\mathcal{O}$ of an orbifold $\mathcal{O}$ is a rank-$n$ vector orbibundle.
\end{example}

We end this section presenting the orbifold version of Frobenius' theorem. An \textit{involutive distribution} over an orbifold $\mathcal{O}$ is a subbundle $\mathcal{E}\subset T\mathcal{O}$ such that for any two sections $X$ and $Y$ of $\mathcal{E}$, the Lie bracket $[X,Y]$ is also a section of $\mathcal{E}$.

\begin{theorem}[Frobenius' theorem for orbifolds {\cite[Lemma 2.10]{kleiner}}]
Let $\mathcal{E}$ be an involutive distribution over an orbifold $\mathcal{O}$. Then for each $x\in|\mathcal{O}|$ there is a unique maximal suborbifold containing $x$ and tangent to $\mathcal{E}$.
\end{theorem}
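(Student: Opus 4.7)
The plan is to reduce the problem to the classical Frobenius theorem, applied fiber-by-fiber on orbifold charts, and then to glue the resulting local leaves in an equivariant way.

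First, I would work in a linear chart $(\widetilde{U},\Gamma_x,\phi)$ around $x$ of the type constructed in item (v) of Section~1.2, so that $\phi^{-1}(x)=\{\tilde x\}$. Any section $X$ of $\mathcal{E}$ over $U$ pulls back to a $\Gamma_x$-invariant section $\widetilde X$ of $T\widetilde U$, and since this correspondence between $\mathscr{H}_\mathcal{O}$-invariant vector fields on $\widetilde U$ and vector fields on $U$ is a Lie-algebra isomorphism, the lifted subbundle $\widetilde{\mathcal{E}}\subset T\widetilde U$ is a $\Gamma_x$-invariant involutive distribution on $\widetilde U$ in the classical sense. I would apply the usual Frobenius theorem to obtain the unique maximal integral submanifold $\widetilde L\subset\widetilde U$ through $\tilde x$.

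Next, I would show that $\widetilde L$ is $\Gamma_x$-invariant: for any $h\in\Gamma_x$, the image $h(\widetilde L)$ is again an integral submanifold of $\widetilde{\mathcal{E}}$ through $h(\tilde x)=\tilde x$, so by uniqueness $h(\widetilde L)=\widetilde L$. Shrinking $\widetilde U$ to the standard Frobenius ``slice chart'' $\widetilde U\cong\widetilde L\times\widetilde N$, one can further arrange, by averaging a Riemannian metric under $\Gamma_x$ and using the exponential map, that the decomposition itself is $\Gamma_x$-equivariant, with $\Gamma_x$ acting on $\widetilde L$ and $\widetilde N$ separately. This gives a compatible orbifold chart $(\widetilde L,\Gamma_x,\phi|_{\widetilde L})$ for an open neighborhood of $x$ in the prospective leaf $\mathcal{L}$, and the inclusion lifts to the smooth embedding $\widetilde L\hookrightarrow\widetilde U$, so it is a (strong) suborbifold chart. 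Uniqueness at this local level follows directly from the classical uniqueness of $\widetilde L$.

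To build the global maximal leaf $\mathcal{L}$ through $x$, I would mimic the standard manifold construction: let $|\mathcal{L}|$ be the set of points in $|\mathcal{O}|$ that can be joined to $x$ by a piecewise smooth path whose tangent vectors (in each tangent cone) admit representatives lying in the lifted $\widetilde{\mathcal{E}}$ on some chart, and equip $|\mathcal{L}|$ with the topology generated by the local slices $\widetilde L/\Gamma_y$ obtained above at each $y\in|\mathcal{L}|$. The local charts $(\widetilde L,\Gamma_y,\phi|_{\widetilde L})$ then form an atlas because on overlaps they come from restrictions of the classical integral submanifolds, and the embeddings between charts of $\mathcal{O}$ restrict to embeddings between these slice charts by uniqueness. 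Maximality and uniqueness of $\mathcal{L}$ reduce to the corresponding statements in each chart, together with a standard path-connectedness argument.

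The main obstacle, I expect, is not the pointwise Frobenius step but the equivariant refinement of the slice chart and the verification that the slice charts $(\widetilde L,\Gamma_y,\phi|_{\widetilde L})$ assemble into a Hausdorff, paracompact orbifold atlas on $\mathcal{L}$; in particular one must check that when $y$ lies in the closure of a stratum of smaller isotropy the local groups behave correctly and the inclusion $\mathcal{L}\hookrightarrow\mathcal{O}$ remains an immersion. Once this bookkeeping is in place, both existence and uniqueness fall out of the classical Frobenius theorem applied inside each chart.
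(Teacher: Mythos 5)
The paper does not actually prove this theorem---it is quoted from Kleiner--Lott---so your proposal can only be measured against the standard argument; your overall strategy (lift to a linear chart, apply the classical Frobenius theorem, use uniqueness of leaves to get $\Gamma_x$-invariance, build an equivariant slice chart by averaging a metric, then glue) is the right one. There is, however, one step whose justification as written does not work. You deduce that the lifted subbundle $\widetilde{\mathcal{E}}\subset T\widetilde{U}$ is involutive in the classical sense from the fact that sections of $\mathcal{E}$ correspond to invariant sections of $\widetilde{\mathcal{E}}$ and that this correspondence respects brackets. But at the fixed point $\tilde{x}$ every $\Gamma_x$-invariant vector field takes its value in the fixed subspace $(T_{\tilde{x}}\widetilde{U})^{\Gamma_x}$, so invariant sections of $\widetilde{\mathcal{E}}$ need not span $\widetilde{\mathcal{E}}_{\tilde{x}}$, and testing involutivity only against invariant sections is not enough to invoke the classical Frobenius theorem at $\tilde{x}$. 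The gap is repairable: on the open dense set where the $\Gamma_x$-action is free one can choose neighborhoods with pairwise disjoint translates, so arbitrary local sections of $\widetilde{\mathcal{E}}$ extend equivariantly to invariant ones and $\widetilde{\mathcal{E}}$ is classically involutive there; then, writing $[e_i,e_j]$ in a frame adapted to $\widetilde{\mathcal{E}}$, the normal components are continuous functions vanishing on a dense set, hence everywhere. You should make this density argument explicit.

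A second point deserves more than the ``bookkeeping'' you defer to at the end: your leaf chart $(\widetilde{L},\Gamma_x,\phi|_{\widetilde{L}})$ need not satisfy the effectiveness requirement imposed on chart groups in this text, since the subgroup $K<\Gamma_x$ acting trivially on $\widetilde{L}$ but nontrivially in the normal directions can be nontrivial. Passing to the effective quotient $\Gamma_x/K$ is then forced, and exhibiting the inclusion of the leaf as an immersion in the sense of this paper requires an injective homomorphism $\Gamma_x/K\to\Gamma_x$ covered equivariantly by $\widetilde{L}\hookrightarrow\widetilde{U}$, i.e.\ a compatible splitting of $1\to K\to\Gamma_x\to\Gamma_x/K\to 1$. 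This can genuinely fail: for $\mathbb{Z}_4$ acting on $\mathbb{R}\times\mathbb{C}$ by $(u,z)\mapsto(-u,\mathrm{i}z)$ and $\mathcal{E}$ spanned by $\partial_u$, the leaf through the origin is $\mathbb{R}/\!/(\mathbb{Z}_4/\mathbb{Z}_2)$ and no such splitting exists. So either you allow non-effective chart groups on the leaf or you must confront this obstruction directly; it is the one place where the definitions of this paper actually bite, rather than a routine verification.
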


\section{Integration and de Rham cohomology}

As we did to $T\mathcal{O}$, by gluing the local data we obtain \textit{$(j,i)$-tensor orbibundles} over $\mathcal{O}$, denoted $\bigotimes_j^i\mathcal{O}$, which have as suborbibundles the \textit{$i$-th exterior orbibundles} $\bigwedge^i\mathcal{O}$. Smooth sections of $\bigotimes_j^i\mathcal{O}$ and $\bigwedge^i\mathcal{O}$ yields us \textit{$(j,i)$-tensor} fields and \textit{$i$-forms} on $\mathcal{O}$, respectively. We denote the space of $i$-forms on $\mathcal{O}$ by $\Omega^i(\mathcal{O})$. Again, in a local chart $(\widetilde{U},H,\phi)$ they define $H$-invariant tensor fields in $\widetilde{U}$ and we have a correspondence between tensor fields on $\mathcal{O}$ and $\mathscr{H}_{\mathcal{O}}$-invariant tensor fields on $U_{\mathcal{O}}$. In particular, by naturality there is a well-defined \textit{exterior derivative}
$$d:\Omega^i(\mathcal{O})\longrightarrow\Omega^{i+1}(\mathcal{O}).$$
Similarly, if $X$ and $\xi$ are $\mathscr{H}_{\mathcal{O}}$-invariant vector and $(0,i)$-tensor fields on $U_{\mathcal{O}}$, respectively, then $\mathcal{L}_X\xi$ will also be $\mathscr{H}_{\mathcal{O}}$-invariant, hence Lie derivatives of tensor fields are well defined on $\mathcal{O}$.

On an oriented $n$-orbifold $\mathcal{O}$ we can define integration of $n$-forms. Let $(\widetilde{U},H,\phi)$ be an orbifold chart for $U$. Given a compactly supported $n$-form $\omega\in\Omega^n(\widetilde{U}/\!/H)$, i.e, an $H$-invariant compactly supported $n$-form $\tilde{\omega}\in\Omega^n(\widetilde{U})$, we define
$$\int_U\omega:=\frac{1}{|H|}\int_{\widetilde{U}}\tilde{\omega}.$$
For a general compactly supported $n$-form $\omega\in\Omega^n(\mathcal{O})$ we use partitions of unity to define the integral as a sum of these chart integrals.  

\begin{lemma}[Partitions of unity for orbifolds {\cite[Lemma 2.11]{kleiner}} {\cite[Lemma 4.2.1]{chenruan}}]
Any atlas of a smooth orbifold $\mathcal{O}$ admits a locally finite refinement $\{\widetilde{U}_i,H_i,\phi_i\}$ such that there exists a collection of functions $\xi_i\in C^\infty(\mathcal{O})$ satisfying
\begin{enumerate}[(i)]
\item $0\leq\xi_i\leq1$,
\item $\mathrm{supp}(\xi_i)\subset U_i$,
\item $\sum_i \xi_i(x)=1$, for all $x\in|\mathcal{O}|$.
\end{enumerate}
\end{lemma}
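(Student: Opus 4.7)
The plan is to mimic the classical manifold construction, with the single extra ingredient being averaging bump functions over the finite chart groups so they descend to the quotient.

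First I would use the fact that $|\mathcal{O}|$ is Hausdorff and paracompact to extract from the given atlas a locally finite refinement $\{(\widetilde{U}_i, H_i, \phi_i)\}_{i\in I}$ whose underlying open cover $\{U_i\}$ of $|\mathcal{O}|$ is locally finite. A standard shrinking argument then produces a second open cover $\{V_i\}$ with $\overline{V_i}\subset U_i$ still covering $|\mathcal{O}|$; I can assume each $V_i$ is of the form $\phi_i(\widetilde{V}_i)$ with $\widetilde{V}_i\subset\widetilde{U}_i$ an $H_i$-invariant open set whose closure is compact in $\widetilde{U}_i$ (replace $\widetilde{V}_i$ by $H_i\cdot \widetilde{V}_i$ if necessary; finiteness of $H_i$ keeps the closure compact).

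Next I would build the local bumps. For each $i$, choose a smooth compactly supported function $\tilde{\eta}_i\colon \widetilde{U}_i\to[0,1]$ with $\tilde{\eta}_i\equiv 1$ on $\overline{\widetilde{V}_i}$ and $\operatorname{supp}(\tilde{\eta}_i)\subset\widetilde{U}_i$ compact. Because $H_i$ is finite, the average
$$\tilde{\eta}_i^{H_i} := \frac{1}{|H_i|}\sum_{h\in H_i} \tilde{\eta}_i\circ h$$
is a smooth $H_i$-invariant function with $\tilde{\eta}_i^{H_i}\equiv 1$ on $\overline{\widetilde{V}_i}$, with values still in $[0,1]$, and with compact support inside $\widetilde{U}_i$ (the finite union of the supports of $\tilde{\eta}_i\circ h$). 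Being $H_i$-invariant, $\tilde{\eta}_i^{H_i}$ descends through $\phi_i$ to a continuous function $\eta_i$ on $U_i$, which is smooth in the orbifold sense because its canonical lift in the chart $(\widetilde{U}_i,H_i,\phi_i)$ is smooth, and compatibility across other charts is guaranteed by item (iii) of Section~1.2: any embedding $\lambda$ of a subchart into $(\widetilde{U}_i,H_i,\phi_i)$ pulls $\tilde{\eta}_i^{H_i}$ back to a smooth $H$-invariant function there, so $\eta_i\in C^\infty(\mathcal{O}|_{U_i})$. Since $\operatorname{supp}(\tilde{\eta}_i^{H_i})$ is compact in $\widetilde{U}_i$, $\operatorname{supp}(\eta_i)$ is compact in $U_i$, so extending $\eta_i$ by zero yields a globally smooth $\eta_i\in C^\infty(\mathcal{O})$ with $\operatorname{supp}(\eta_i)\subset U_i$.

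Finally I would normalize. Local finiteness of $\{U_i\}$ ensures that the sum $\sigma := \sum_i \eta_i$ is, on a neighborhood of any point, a finite sum of smooth functions, hence smooth. Since every $x\in|\mathcal{O}|$ lies in some $V_j$ and $\eta_j\equiv 1$ on $V_j$, we have $\sigma(x)\geq 1>0$ everywhere. Setting $\xi_i := \eta_i/\sigma$ gives functions in $C^\infty(\mathcal{O})$ with $0\leq\xi_i\leq 1$, $\operatorname{supp}(\xi_i)\subset U_i$, and $\sum_i\xi_i\equiv 1$, which are the three required properties.

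The only genuinely orbifold-flavored step is the averaging together with verifying that the descended function is smooth in the orbifold sense across overlaps, i.e. that the $H_i$-invariant bump pulls back, via any chart embedding, to a smooth function; this is what I expect to warrant the most care, but it is immediate from the definition of a smooth function on an orbifold once one observes that embeddings of charts are smooth and intertwine the chart groups. Everything else is a verbatim transcription of the classical manifold argument, enabled by the paracompactness built into the definition of a smooth orbifold.
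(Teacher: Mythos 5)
Your proof is correct and follows exactly the route the paper sketches: obtain a locally finite refinement from paracompactness of $|\mathcal{O}|$, then work with $H_i$-invariant bump functions on the $\widetilde{U}_i$ (produced here by averaging over the finite chart groups) and normalize. The only point deserving a word of care, which you already flag, is arranging the shrunken sets $\widetilde{V}_i$ to have compact closure in $\widetilde{U}_i$ so the bumps have compact support, but this is handled by a further standard refinement just as in the manifold case.
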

The proof of this lemma is similar to the case of manifolds: one finds the locally finite refinement by paracompactness, then work with $H_i$-invariant functions on $\widetilde{U}_i$.

Now, for a compactly supported $n$-form $\omega\in\Omega^n(\mathcal{O})$ we can define
$$\int_{\mathcal{O}}\omega:=\sum_j\int_{U_j}\xi_j\omega,$$
where $\{\xi_j\}$ is a partition of unity subordinated to an oriented atlas $\{(\widetilde{U}_j,H_j,\phi_j)\}$. This definition is independent of the choices involved (see \cite[p. 35]{adem}).

\begin{theorem}[Stokes' theorem for orbifolds]
Let $\mathcal{O}$ be an oriented $n$-dimensional orbifold with boundary and let $\omega\in\Omega^{n-1}(\mathcal{O})$ be compactly supported. Then
$$\int_{\mathcal{O}}d\omega=\int_{\partial\mathcal{O}}\omega.$$
\end{theorem}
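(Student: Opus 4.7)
The plan is to reduce Stokes' theorem for orbifolds to the classical Stokes' theorem on manifolds by working chart-by-chart, using the partition-of-unity definition of the integral together with the fact that in an orbifold chart $(\widetilde{U}, H, \phi)$, forms on $U$ correspond to $H$-invariant forms on $\widetilde{U}$, and the exterior derivative is natural (so the lift of $d\omega$ is $d\tilde{\omega}$).

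First I would fix an oriented, locally finite atlas $\{(\widetilde{U}_j, H_j, \phi_j)\}$ of $\mathcal{O}$ whose boundary charts use the model $[0,\infty) \times \mathbb{R}^{n-1}$ (with $H_j$ preserving the boundary hyperplane), and choose a subordinate partition of unity $\{\xi_j\}$ supplied by the partition-of-unity lemma. Since $\omega$ is compactly supported, only finitely many $\xi_j \omega$ are nonzero, so by the definition of the integral and linearity it suffices to establish
\[
\int_{U_j} d(\xi_j \omega) = \int_{\partial U_j} \xi_j \omega
\]
for each $j$, and then sum, using $\sum_j \xi_j \equiv 1$ and $\sum_j d\xi_j \equiv 0$ to reassemble $\int_{\mathcal{O}} d\omega$ on the left and $\int_{\partial \mathcal{O}} \omega$ on the right.

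Next, for a single chart, write $\eta := \xi_j \omega$ and let $\tilde{\eta} \in \Omega^{n-1}(\widetilde{U}_j)$ be the $H_j$-invariant lift; by naturality of $d$ and the fact that the $H_j$-action preserves orientation, $d\tilde{\eta}$ is also $H_j$-invariant, and it is the lift of $d\eta$. Then by definition
\[
\int_{U_j} d\eta = \frac{1}{|H_j|} \int_{\widetilde{U}_j} d\tilde{\eta}, \qquad \int_{\partial U_j} \eta = \frac{1}{|H_j|} \int_{\partial \widetilde{U}_j} \tilde{\eta},
\]
where the integral over $\partial U_j$ is interpreted using the induced orbifold structure on the boundary (whose charts are precisely $(\partial \widetilde{U}_j, H_j, \phi_j|_{\partial \widetilde{U}_j})$ with the boundary orientation). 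Classical Stokes' theorem applied to the compactly supported $\tilde{\eta}$ on the manifold-with-boundary $\widetilde{U}_j$ gives $\int_{\widetilde{U}_j} d\tilde{\eta} = \int_{\partial \widetilde{U}_j} \tilde{\eta}$, and dividing by $|H_j|$ yields the chart-level identity.

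The step I expect to require the most care is the bookkeeping at the boundary: checking that the $H_j$-action on a boundary chart $\widetilde{U}_j \subset [0,\infty) \times \mathbb{R}^{n-1}$ restricts consistently to an action on $\partial \widetilde{U}_j = \{0\} \times \mathbb{R}^{n-1}$, that the induced orientation on $\partial \widetilde{U}_j$ matches the orientation used to define $\int_{\partial \mathcal{O}}$, and that interior charts contribute zero to $\int_{\partial \mathcal{O}} \omega$ (their lifts have no boundary) in agreement with the fact that classical Stokes there yields $\int_{\widetilde{U}_j} d\tilde{\eta} = 0$. Once this compatibility is settled and the orbifold partition of unity is in place, everything else is a direct transcription of the manifold argument weighted by $1/|H_j|$.
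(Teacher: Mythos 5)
Your proposal is correct and follows exactly the route the paper itself prescribes: the text states the theorem and leaves the proof as an exercise whose hint is precisely ``reduce to the classical Stokes' theorem using a partition of unity,'' which is what you carry out, with the $1/|H_j|$ normalization cancelling on both sides of the chart-level identity. The boundary bookkeeping you flag (restriction of the $H_j$-action to $\partial\widetilde{U}_j$, induced orientation, vanishing contribution of interior charts) is indeed the only point needing care, and your treatment of it is sound.
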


\begin{exercise}
Prove Stokes' theorem for orbifolds by reducing to the classical Stokes' theorem using a partition of unity.
\end{exercise}

Given an orbifold $\mathcal{O}$, the cohomology groups of the complex
$$\cdots \stackrel{d}{\longrightarrow} \Omega^{i-1}(\mathcal{O}) \stackrel{d}{\longrightarrow} \Omega^i(\mathcal{O}) \stackrel{d}{\longrightarrow} \Omega^{i+1}(\mathcal{O}) \stackrel{d}{\longrightarrow}\cdots$$
are the \textit{de Rham cohomology groups} of $\mathcal{O}$, that we denote by $H_{\mathrm{dR}}^i(\mathcal{O})$. The rank $b_i=\rank H_{\mathrm{dR}}^i(\mathcal{O})$ is the \textit{$i$-th Betti number of $\mathcal{O}$}. As in the manifold case, these groups are invariant under homotopy equivalence. This follows from the result below, which can be seen as a version of the De Rham theorem for orbifolds, that asserts that they are isomorphic to the real singular cohomology groups of the quotient space $|\mathcal{O}|$ (see also \cite[Theorem 2.13]{adem}).

\begin{theorem}[De Rham theorem for orbifolds {\cite[Theorem 1]{satake}}]\label{theorem: Satake}
Let $\mathcal{O}$ be an orbifold. Then $H_{\mathrm{dR}}^i(\mathcal{O})\cong H^i(|\mathcal{O}|,\mathbb{R})$.
\end{theorem}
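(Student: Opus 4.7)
The plan is to adapt the classical double-complex proof of the de Rham theorem, using that the chart groups are finite to reduce the local computation to the manifold case via averaging. First I would construct a \emph{good cover} $\mathcal{U}=\{U_i\}$ of $|\mathcal{O}|$ by linear charts $(\widetilde{U}_i,H_i,\phi_i)$ such that every nonempty finite intersection $U_{i_0\cdots i_k}$ admits a compatible chart $(\widetilde{W},G,\psi)$ with $\widetilde{W}$ a $G$-invariant open set star-shaped about a $G$-fixed point. Using the existence of linear atlases (item (v) of Section 1.2) and shrinking to $G$-invariant geodesic balls in $H_i$-invariant Riemannian metrics on $\widetilde{U}_i$, one arranges that common refinements of intersections lift to geodesically convex, hence star-shaped, $G$-invariant domains; the uniqueness property (iii) of compatibility embeddings ensures that different lifts agree up to chart-group elements.

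The key local input is an \emph{orbifold Poincar\'e lemma}: on a chart $(\widetilde{W},G,\psi)$ as above, $H^i_{\mathrm{dR}}(W)\cong\mathbb{R}$ for $i=0$ and vanishes otherwise, and correspondingly $|W|=\widetilde{W}/G$ is contractible. The second assertion is immediate since the linear contraction $H_t(\widetilde{x})=t\widetilde{x}$ is $G$-equivariant and descends to a deformation retraction of $|W|$ to a point. For the first, either one observes that the classical cone homotopy $K\omega=\int_0^1\iota_{X_t}H_t^*\omega\,dt$ is manifestly $G$-equivariant and so restricts to $\Omega^*(\widetilde{W})^G=\Omega^*(W)$, or one averages: since $|G|$ is invertible in $\mathbb{R}$, the functor of $G$-invariants is exact, giving $H^i_{\mathrm{dR}}(W)=H^i(\Omega^*(\widetilde{W}))^G$, which is $\mathbb{R}$ or $0$ by the usual Poincar\'e lemma on $\widetilde{W}$.

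With this local input I would form the \v{C}ech--de Rham double complex $C^{p,q}:=\check{C}^p(\mathcal{U},\Omega^q)$. Taking vertical cohomology first and applying the orbifold Poincar\'e lemma collapses the columns to $\check{C}^p(\mathcal{U},\underline{\mathbb{R}})$, so the total cohomology is the \v{C}ech cohomology $\check{H}^*(\mathcal{U},\underline{\mathbb{R}})$. Taking horizontal cohomology first and invoking the orbifold partitions of unity (the lemma preceding Stokes' theorem above), each row is acyclic in positive $p$-degree, so the total cohomology equals $H^*_{\mathrm{dR}}(\mathcal{O})$. Hence $H^*_{\mathrm{dR}}(\mathcal{O})\cong\check{H}^*(\mathcal{U},\underline{\mathbb{R}})$. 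Because $|\mathcal{O}|$ is paracompact Hausdorff and each $|U_i|$ is contractible, $\mathcal{U}$ is a good cover of $|\mathcal{O}|$ in the purely topological sense, so this \v{C}ech cohomology agrees with singular cohomology $H^*(|\mathcal{O}|,\mathbb{R})$, concluding the proof.

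The main obstacle will be the local Poincar\'e lemma, specifically arranging that the chain homotopy descends to invariant forms. This is clean in characteristic zero via the averaging operator, but it does require choosing the star-shaped structure of chart intersections $G$-equivariantly, which is why one must pass through a common refinement and invoke property (iii) of Section 1.2 rather than working naively with intersections of charts having different groups. A secondary technicality is the final comparison between \v{C}ech and singular cohomology on the singular space $|\mathcal{O}|$, but since the cover consists of contractible opens with contractible finite intersections and $|\mathcal{O}|$ is paracompact, the standard nerve-theorem/sheaf-theoretic argument applies without modification.
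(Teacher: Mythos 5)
The paper does not actually prove this theorem; it only cites Satake and Adem--Leida--Ruan, so there is no in-text argument to compare against. Your \v{C}ech--de Rham double-complex proof is essentially the standard argument from those references and is correct in outline: the local computation is handled cleanly by the averaging observation (taking $H$-invariants is exact over $\mathbb{R}$ because $|H|$ is invertible, so $H^i(\Omega^*(\widetilde{W})^G)\cong H^i(\Omega^*(\widetilde{W}))^G$), the rows collapse by the orbifold partition-of-unity lemma, and the nerve comparison applies because $|\mathcal{O}|$ is paracompact and locally contractible. One point to tighten: you build convexity from separate $H_i$-invariant metrics on the individual $\widetilde{U}_i$, but convexity of a \emph{nonempty finite intersection} is only meaningful with respect to a single ambient metric, so you should instead fix one global Riemannian metric on $\mathcal{O}$ (Proposition \ref{prop: orbifold admits a Riemannian metric}) and take convex geodesic balls; then each nonempty intersection is convex, hence connected and contained in a chart, hence uniformized by item (\ref{item:induced compatible charts}) of Section 1.2, and a $\Gamma$-fixed basepoint in the lift is obtained as the center of mass of an orbit, giving the star-shaped structure you need. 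Alternatively, you can avoid the good cover entirely by phrasing the same ingredients sheaf-theoretically: the pushforward de Rham complex on $|\mathcal{O}|$ is a fine resolution of the constant sheaf $\underline{\mathbb{R}}$ (Poincar\'e lemma by averaging, fineness by the partition of unity), which is the form of the argument in the cited reference of Adem, Leida and Ruan.
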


Virtually all the machinery involving differential forms on manifolds generalize to orbifolds. A particular case  is that, if $\mathcal{O}$ is an $n$-dimensional compact oriented orbifold, $\omega\otimes\eta\mapsto \int_{\mathcal{O}}\omega\wedge\eta$ induces a pairing
$$H^i_{\mathrm{dR}}(\mathcal{O})\otimes H^{n-i}_{\mathrm{dR}}(\mathcal{O})\longrightarrow \mathbb{R},$$
and one has the following.

\begin{theorem}[Poincaré duality for De Rham cohomology of orbifolds {\cite[Theorem 3]{satake}}]
Let $\mathcal{O}$ be an oriented, compact, $n$-dimensional orbifold. Then the above pairing is non-degenerate, hence $H^i_{\mathrm{dR}}(\mathcal{O})\cong H^{n-i}_{\mathrm{dR}}(\mathcal{O})$.
\end{theorem}

\section{Actions on orbifolds}\label{section: actions on orbifolds}

Let $G$ be a Lie group and $\mathcal{O}$ be an orbifold. We say that a smooth orbifold map $a:G\times\mathcal{O}\to\mathcal{O}$ is a \textit{smooth action} of $G$ on $\mathcal{O}$ if $|a|:G\times|\mathcal{O}|\to|\mathcal{O}|$ is a continuous action. All usual properties and notions of group actions are defined for $a$ in terms of $|a|$.

For each fixed $g\in G$, a smooth action defines a diffeomorphism $a^g:\mathcal{O}\to\mathcal{O}$. Hence the action respects the stratification of $\mathcal{O}$, so each orbit $Gx$ remains within a stratum $\Sigma_\alpha\ni x$. In fact, as in the manifold case, $a$ induces an injective immersion $G/G_x\to G_x$.

\begin{proposition}[Orbits are submanifolds {\cite[Lemma 2.11 and Corollary 2.14]{galazgarcia}}]\label{prop: fixed points suborbifolds}
Let $a:G\times\mathcal{O}\to\mathcal{O}$ be a smooth action. Then each orbit is a manifold and a strong suborbifold of $\mathcal{O}$. Moreover, if $G$ is compact and connected and acts effectively, then each connected component of the fixed point set $\mathcal{O}^G$ is also a strong suborbifold.
\end{proposition}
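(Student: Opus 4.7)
The plan is to handle the two assertions separately, but with the same unifying idea: lift the $G$-action on the singular quotient to an action on a chart upstairs, and read off the suborbifold structure from there.

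For the orbit statement, I would begin by noting that each $\mu^g:\mathcal{O}\to\mathcal{O}$ is a diffeomorphism, hence preserves the canonical stratification; so $Gx$ lies entirely in the single stratum $\Sigma_{\Gamma_x}$, which is a manifold. The map $\mu_x:G\to\mathcal{O}$, $g\mapsto gx$, descends to an injective immersion $G/G_x\to Gx$ exactly as in the manifold case, giving $Gx$ its manifold structure. To promote this to a strong suborbifold, fix a chart $(\widetilde{U},\Gamma_x,\phi)$ around $x$ and, using smoothness of $\mu$ at $(e,x)$, obtain a local lift $\tilde{\mu}:V\times\widetilde{U}\to\widetilde{U}$ covering the identity homomorphism $\{e\}\times\Gamma_x\to\Gamma_x$. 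Setting $f(g):=\tilde{\mu}(g,\tilde{x})$ and using $h\cdot\tilde{x}=\tilde{x}$ for $h\in\Gamma_x=H_{\tilde{x}}$, equivariance gives
\[
h\cdot f(g)=h\cdot\tilde{\mu}(g,\tilde{x})=\tilde{\mu}(g,h\cdot\tilde{x})=\tilde{\mu}(g,\tilde{x})=f(g),
\]
so $f(V)\subseteq\widetilde{U}^{\Gamma_x}$. A differential computation at $e$ (using $\mathfrak{g}_x\subseteq\ker df|_e$ together with the fact that $d\mu_x|_e$ has rank $\dim Gx$) shows that $f$ factors through an immersion of a neighborhood of $eG_x$ in $G/G_x$ into $\widetilde{U}^{\Gamma_x}$. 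The image is tautologically $\Gamma_x$-invariant, and because any other local lift differs from $\tilde{\mu}$ by postcomposition with an element of $\Gamma_x$ (property (iii) of the excerpt), which acts trivially on $\widetilde{U}^{\Gamma_x}$, the image is independent of the choice of lift---precisely the strong suborbifold condition.

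For the fixed-point statement, the decisive auxiliary is a compact Lie group covering $G$ which acts genuinely smoothly on $\widetilde{U}$. After shrinking $U$, using compactness of $G$, so that $U$ is $G$-invariant, I would define
\[
\widetilde{G}:=\{(g,\tilde{\mu}^g):g\in G,\ \tilde{\mu}^g\in\mathrm{Diff}(\widetilde{U}),\ \phi\circ\tilde{\mu}^g=\mu^g\circ\phi\},
\]
which by property (iii) fits into an extension $1\to\Gamma_x\to\widetilde{G}\to G\to 1$. One endows $\widetilde{G}$ with a Lie group structure for which the evaluation $\widetilde{G}\times\widetilde{U}\to\widetilde{U}$ is smooth; its identity component $\widetilde{G}^0$ is then a compact connected Lie group covering $G$ and fixing $\tilde{x}$. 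The classical theorem for compact Lie group actions on manifolds gives that $F:=\widetilde{U}^{\widetilde{G}^0}$ is a closed embedded submanifold. For $y$ near $x$, connectedness of $\widetilde{G}^0$ together with discreteness of $\phi^{-1}(y)$ forces the $\widetilde{G}^0$-orbit of any lift of $y$ to be a single point; hence $\phi^{-1}(U^G)=F$. Normality of $\widetilde{G}^0$ in $\widetilde{G}$ makes $F$ invariant under $\Gamma_x\subset\widetilde{G}$, so the connected component of $F$ through $\tilde{x}$ supplies a $\Gamma_x$-invariant submanifold chart for the component of $\mathcal{O}^G$ through $x$, yielding the strong suborbifold property.

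The main obstacle I anticipate lies in the fixed-point part: constructing $\widetilde{G}$ as a genuine Lie group (lifting topology and smooth structure from $G$ through the finite cover by $\Gamma_x$) and verifying that $\widetilde{G}^0$ surjects onto $G$. Once that is in hand, the rest is standard. It is precisely here that all three hypotheses enter: compactness of $G$ allows the $G$-invariant shrinking of $U$ and underlies the compactness of $\widetilde{G}$; connectedness of $G$ is what guarantees $\widetilde{G}^0$ covers all of $G$, so that fixing $\widetilde{G}^0$ and fixing $G$ become equivalent; and effectiveness keeps the lifted action from degenerating by controlling the intersection $\Gamma_x\cap\widetilde{G}^0$.
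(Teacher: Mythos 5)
The paper does not actually prove this proposition: it is imported from the cited reference, and the text only records the two ingredients you also build on --- the observation just before the statement that each $\mu^g$ is a diffeomorphism, so $Gx\subset\Sigma_{\Gamma_x}$ and $G/G_x\to Gx$ is an injective immersion, and, just after it, the lifted group $\widetilde{G}_x$ of all lifts of the $\mu^g$ to a chart, an extension of $G_x$ by $\Gamma_x$ acting on $\widetilde{U}$ with $\widetilde{U}/\widetilde{G}_x=U/G_x$. Your proposal is therefore on the intended route. The fixed-point argument in particular is sound in outline: the identity component $\widetilde{G}^0$ surjects onto the connected group $G$ because the kernel $\Gamma_x$ is discrete, its fixed set $F=\widetilde{U}^{\widetilde{G}^0}$ is a closed submanifold equal to $\phi^{-1}(U^G)$ by the connected-orbit-in-a-finite-set argument, and $F$ is $\Gamma_x$-invariant by normality of the identity component. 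The Lie group structure on $\widetilde{G}$, which you rightly flag as the delicate point, is also only asserted (with a citation) in the paper.

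One step is wrong as written. In the orbit part you deduce that the image of a local lift is independent of the lift because ``any other local lift differs from $\tilde{\mu}$ by postcomposition with an element of $\Gamma_x$ (property (iii))''. Property (iii) concerns embeddings between orbifold charts; for local lifts of smooth maps the analogous statement is false, and the paper exhibits an explicit counterexample (the two lifts $\widetilde{f}_1,\widetilde{f}_2$ into the quotient of $\mathbb{R}\times\mathbb{C}$ by $\mathbb{Z}_4$). The conclusion you need survives for a more elementary reason that is already implicit in your setup: since $Gx\cap U$ lies in the stratum whose local group is the full chart group $\Gamma_x$, each fibre $\phi^{-1}(y)$ with $y\in Gx\cap U$ is a single fixed point of $\Gamma_x$, so $\phi$ is injective on $\phi^{-1}(Gx\cap U)\subset\widetilde{U}^{\Gamma_x}$ and every lift of the inclusion has image exactly $\phi^{-1}(Gx\cap U)$. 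You should substitute that argument. A similar (small but genuine) check is elided at the very end of the fixed-point part: on $F$ the map $\phi$ need not be injective, so ``yields the strong suborbifold property'' requires an extra sentence showing that any lift of the inclusion of $\mathcal{O}^G$, being a continuous map into $F$ that fixes $\tilde{x}$ and covers the quotient map $F\to U^G$, has image equal to the component of $F$ through $\tilde{x}$ independently of the lift.
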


Suppose we have an effective action by a compact Lie group $G$ on $\mathcal{O}$ and let $x\in|\mathcal{O}|$. For a chart $(\widetilde{U},\Gamma_x,\phi)$ around $x\in|\mathcal{O}|$, consider
$$\widetilde{G}_x:=\{F_g:\widetilde{U}\to\widetilde{U}\}\ |\ \phi\circ F_g=a^g\circ\phi,\ g\in G_x\},$$
the collection of all possible lifts of $a^g$ near $x$. This is a Lie group, in fact an extension\footnote{That is, there is a short exact sequence $0\to\Gamma_x\to\widetilde{G}_x\to G_x\to 0$.} of $G_x$ by $\Gamma_x$, which acts on $\widetilde{U}$ satisfying $\widetilde{U}/\widetilde{G}_x=U/G_x$ (see \cite[Proposition 2.12]{galazgarcia}).

The tubular neighborhood theorem for smooth actions on manifolds generalizes as follows.

\begin{theorem}[Tubular neighborhood theorem for orbifolds {\cite[Proposition 2.3.7]{yeroshkin}} {\cite[Theorem 2.18]{galazgarcia}}]
Let $\mathcal{O}$ be an orbifold with a smooth effective action by a compact Lie group $G$. Then for each $x\in|\mathcal{O}|$ there is a $G_x$-invariant neighborhood of $Gx$ which is equivariantly diffeomorphic to
$$G\times_{G_x}(\nu_xGx/\!/\Gamma_x)\cong G\times_{\widetilde{G}_x}\nu_xGx.$$
\end{theorem}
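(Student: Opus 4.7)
The strategy is to reduce to the classical (manifold) slice theorem applied to the lifted compact Lie group action of $\widetilde{G}_x$ on the chart domain $\widetilde{U}$, and then descend the resulting equivariant diffeomorphism back to the orbifold by quotienting by $\Gamma_x$ and spreading around the orbit via the $G$-action.

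\textbf{Step 1 (Invariant metric and chart).} First I would produce a $G$-invariant Riemannian metric on $\mathcal{O}$ by averaging an arbitrary metric over $G$ with respect to Haar measure; this works in the orbifold setting because tensor fields on $\mathcal{O}$ correspond to $\mathscr{H}_{\mathcal{O}}$-invariant tensors and the averaging commutes with the chart groups. Pick an orbifold chart $(\widetilde{U},\Gamma_x,\phi)$ around $x$, so that $\phi^{-1}(x)=\{\tilde{x}\}$. Because there is only one preimage of $x$, every lift $F_g\in\widetilde{G}_x$ fixes $\tilde{x}$, so $\widetilde{G}_x$ acts on $\widetilde{U}$ with $\tilde{x}$ as a fixed point. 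Pulling back the $G$-invariant metric of $\mathcal{O}$ gives a $\widetilde{G}_x$-invariant Riemannian metric on $\widetilde{U}$.

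\textbf{Step 2 (Linearize with $\exp$, and split the tangent space).} Since $\widetilde{G}_x$ is a compact Lie group fixing $\tilde{x}$, the exponential map $\exp_{\tilde{x}}\colon T_{\tilde{x}}\widetilde{U}\to \widetilde{U}$ is $\widetilde{G}_x$-equivariant and restricts to a diffeomorphism from a ball around $0$ onto a neighborhood of $\tilde{x}$. The infinitesimal $G$-action gives a linear map $\mathfrak{g}\to T_x\mathcal{O}=T_{\tilde{x}}\widetilde{U}$ with kernel $\mathfrak{g}_x$, whose image $V$ is tangent to the orbit $Gx$; the orthogonal complement with respect to the invariant metric provides a $\widetilde{G}_x$-invariant splitting $T_{\tilde{x}}\widetilde{U}=V\oplus\nu_xGx$, where the action of $\widetilde{G}_x$ on $\nu_xGx$ is the isotropy representation (restricting to the $\Gamma_x$-representation used to define $\nu_xGx$ as a $\Gamma_x$-space).

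\textbf{Step 3 (The tube map and equivariance).} Define
$$E\colon G\times \nu_xGx \longrightarrow \mathcal{O},\qquad E(g,v)=\mu^g\bigl(\phi(\exp_{\tilde{x}}(v))\bigr),$$
on a neighborhood of the zero section. I would check that $E$ is invariant under the $\widetilde{G}_x$-action $h\cdot(g,v)=(g\,\pi(h)^{-1},\, h\cdot v)$ (with $\pi:\widetilde{G}_x\to G_x$ the quotient): for $h\in\widetilde{G}_x$ covering $\bar{h}\in G_x$, equivariance of $\exp_{\tilde{x}}$ and the identity $\phi\circ h=\mu^{\bar{h}}\circ\phi$ combine to give $E(g\bar{h}^{-1},h\cdot v)=E(g,v)$. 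Hence $E$ descends to a smooth orbifold map $\overline{E}\colon G\times_{\widetilde{G}_x}\nu_xGx\to\mathcal{O}$, which is $G$-equivariant for the natural left $G$-action on the domain.

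\textbf{Step 4 (Local diffeomorphism, then global tube).} Computing the differential of $\overline{E}$ at $[(e,0)]$ identifies it with the canonical isomorphism $\mathfrak{g}/\mathfrak{g}_x\oplus\nu_xGx\to V\oplus\nu_xGx=T_x\mathcal{O}$, so $\overline{E}$ is a local orbifold diffeomorphism at the origin. By $G$-equivariance it is a local diffeomorphism everywhere on the zero section $G/G_x=Gx$. Using that $Gx$ is compact (since $G$ is), I would then choose $\varepsilon>0$ small enough that on the $\varepsilon$-disk bundle of $\nu_xGx$ the map $\overline{E}$ is injective (standard argument: if not, one extracts a convergent sequence in the compact disk bundle contradicting the local-diffeomorphism property). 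This yields an equivariant diffeomorphism onto an open $G$-invariant tube around $Gx$.

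\textbf{Step 5 (The two forms).} Finally, the equivalence $G\times_{\widetilde{G}_x}\nu_xGx\cong G\times_{G_x}(\nu_xGx/\!/\Gamma_x)$ is essentially formal: since $\Gamma_x\trianglelefteq\widetilde{G}_x$ acts trivially on $G$ (the kernel of $\pi$) and faithfully on $\nu_xGx$, one can quotient by $\Gamma_x$ on the fiber first, leaving a residual $G_x=\widetilde{G}_x/\Gamma_x$ action that is then used in the associated bundle construction. I would verify this isomorphism is compatible with the projection to the base $G/G_x$.

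\textbf{Main obstacle.} The routine manifold-theoretic parts (invariant metric, linearization at a fixed point, injectivity on a small disk bundle) all adapt smoothly; the delicate point is bookkeeping the interaction between the orbifold chart group $\Gamma_x$ and the extension $\widetilde{G}_x\to G_x$, in particular checking that $\overline{E}$ is well-defined and smooth as an \emph{orbifold} map (not merely on underlying spaces) and that the isomorphism in Step 5 respects the orbifold structure on both sides. This is essentially the content cited from \cite{galazgarcia,yeroshkin}.
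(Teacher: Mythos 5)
The paper states this theorem without proof, deferring entirely to the cited references of Yeroshkin and Galaz-Garc\'{\i}a et al., so there is no in-text argument to compare against; your sketch is the standard adaptation of the classical tube theorem that those references carry out, and it is sound: averaging for an invariant metric, linearizing the lifted $\widetilde{G}_x$-action at the fixed lift $\tilde{x}$ (which is indeed fixed because $\phi^{-1}(x)$ is a single point), descending the tube map $E$ through the $\widetilde{G}_x$-quotient, and identifying $G\times_{\widetilde{G}_x}\nu_xGx$ with $G\times_{G_x}(\nu_xGx/\!/\Gamma_x)$ by quotienting the fiber by $\Gamma_x=\ker(\widetilde{G}_x\to G_x)$ first. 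The only points deserving an explicit word are the ones you already flag, plus the preliminary fact that one can shrink $U$ to a $G_x$-invariant neighborhood so that the lifts $F_g$ are globally defined on $\widetilde{U}$ and that the local-diffeomorphism claim in Step 4 must be checked in the orbifold sense (both the linear differential and the induced homomorphism on local groups invertible), which is exactly the inverse function theorem for orbifolds stated earlier in the text.
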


One can now generalize Proposition \ref{prop: quotient orbifolds} to the following.

\begin{exercise}\label{exercise: quotient orbifold by action on orbifold}
Let $\mathcal{O}$ be an orbifold with an almost free, effective action by a compact Lie group $G$. Imitate the proof of Proposition \ref{prop: quotient orbifolds} to conclude that the quotient space $|\mathcal{O}|/G$ has a natural orbifold structure (which we denote by $\mathcal{O}/\!/G$, as in the case $\mathcal{O}=M$).
\end{exercise}

An action $a:G\times\mathcal{O}\to\mathcal{O}$ induces a Lie algebra anti-homomorphism $\mathfrak{g}\ni X\mapsto X^\#\in\mathfrak{X}(\mathcal{O})$, where $X^\#$ is the fundamental vector field corresponding to $X$: 
$$X^\#(x) =\left.\od{}{t} a(\exp(tX),x) \right|_{t=0}.$$
We say that a differential form $\omega\in\Omega(\mathcal{O})$ is \textit{horizontal} when $\iota(X^\#)\omega=0$ for all $X\in\mathfrak{g}$. 
Moreover, $\omega$ is \textit{basic} when it is horizontal and invariant (i.e., $(a^g)^*\omega=\omega$ for all $g\in G$). When the action is as in Exercise \ref{exercise: quotient orbifold by action on orbifold} the projection $\pi:\mathcal{O}\to \mathcal{O}/\!/G$ induces an isomorphism $\pi^*:\Omega(\mathcal{O}/\!/G)\to \Omega_{\bas}(\mathcal{O})$, where $\Omega_{\bas}(\mathcal{O})$ is the sub-complex of basic forms, so we have:

\begin{proposition}\label{proposition: basic cohomology and quotients}
Let $\mathcal{O}$ be an orbifold with an almost free, effective action by a compact Lie group $G$. Then $\pi^*:H_{\mathrm{dR}}(\mathcal{O}/\!/G)\to H_{\bas}(\mathcal{O})$ is an isomorphism.
\end{proposition}

\section{Equivariant cohomology of orbifolds}

If a Lie group $G$ acts on $\mathcal{O}$ (not necessarily almost freely) we can consider the equivariant cohomology of $\mathcal{O}$. This is a cohomology theory which will capture information not only of the topology of $\mathcal{O}$ but also of the action, its orbit space and stabilizers\footnote{We have already seen a special case of this object in Section \ref{section: orbifold cohomology and homotopy}}. More precisely, consider the universal principal $G$-bundle $EG\to BG$ \cite[Section 4.11]{husemoller} and form the Borel construction $\mathcal{O}_G:=EG\times_G|\mathcal{O}|$. The \textit{$G$-equivariant cohomology of $\mathcal{O}$} is defined as
$$H_G(\mathcal{O}):=H(\mathcal{O}_G,\mathbb{R}),$$
that is, the singular cohomology of $\mathcal{O}_G$ with coefficients in $\mathbb{R}$.

Alternatively, we have the induced infinitesimal action of the Lie algebra $\mathfrak{g}$ of $G$ on $\mathcal{O}$ given by $\mathfrak{g}\ni X\longmapsto X^{\#}\in\mathfrak{X}(\mathcal{O})$. This defines, for each $X\in\mathfrak{g}$, operators $\iota_X$, $\mathcal{L}_X$ on $\Omega^*(\mathcal{O})$ which, together with $d$, endow $\Omega^*(\mathcal{O})$ with the structure of a $\mathfrak{g}^\star$-algebra (see, for example, \cite[Chapter 2]{guillemin}) and so enables us to also study the $\mathfrak{g}$-equivariant cohomology of $\mathcal{O}$. More precisely, we consider the \textit{Cartan complex}
$$C_{\mathfrak{g}}(\mathcal{O}):=(\sym(\mathfrak{g}^\vee)\otimes \Omega^*(\mathcal{O}))^{\mathfrak{g}},$$
where $\sym(\mathfrak{g}^\vee)$ is the symmetric algebra over the dual $\mathfrak{g}^\vee$ of $\mathfrak{g}$. That is, $C_{\mathfrak{g}}(\mathcal{O})$ consists of those $\omega\in\sym(\mathfrak{g}^\vee)\otimes \Omega^*(\mathcal{O})$ that satisfy $\mathcal{L}_X\omega=0$ for all $X\in\mathfrak{g}$. An element $\omega\in C_{\mathfrak{g}}(\mathcal{O})$ can be viewed as a $\mathfrak{g}$-equivariant\footnote{With respect to the coadjoint acion of $G$ on $\sym(\mathfrak{g}^\vee)$. In particular, if $G$ is Abelian then an element $C_{\mathfrak{g}}(\mathcal{O})$ is just a polynomial map $\omega:\mathfrak{g}\to \Omega^*(\mathcal{O})^G$} polynomial map $\omega:\mathfrak{g}\to \Omega^*(\mathcal{O})$. With this in mind, the \textit{equivariant differential} $d_\mathfrak{g}$ is defined as
$$(d_\mathfrak{g}\omega)(X)=d(\omega(X))-\iota_X(\omega(X)).$$
It is a degree $1$ derivation with respect to the grading $C_{\mathfrak{g}}^n(\mathcal{O})=\bigoplus_{2k+l=n}(\sym_k(\mathfrak{g}^\vee)\otimes \Omega^l(\mathcal{O}))^{\mathfrak{g}}$ and satisfy $d_\mathfrak{g}^2=0$. We define the \textit{$\mathfrak{g}$-equivariant cohomology} of $\mathcal{O}$ as the cohomology of the complex $(C_{\mathfrak{g}}(\mathcal{O}),d_\mathfrak{g})$, denoted
$$H_\mathfrak{g}(\mathcal{O}):=H(C_{\mathfrak{g}}(\mathcal{O}),d_\mathfrak{g}).$$
There is a natural structure of $\sym(\mathfrak{g}^\vee)^{\mathfrak{g}}$-algebra on $H_\mathfrak{g}(\mathcal{O})$, induced by the inclusion $\sym(\mathfrak{g}^\vee)^{\mathfrak{g}}\to C_{\mathfrak{g}}(\mathcal{O})$ given by $f\mapsto f\otimes 1$.

The orbifold version of the equivariant De Rham theorem states that $H_G(\mathcal{O})$ and $H_\mathfrak{g}(\mathcal{O})$ are the same, provided $G$ is compact and connected.

\begin{theorem}[Equivariant De Rham theorem for orbifolds {\cite[Theorem 3.5]{caramello2}}]\label{thrm: equivariant De Rham theorem for orbifolds}
Let a connected, compact Lie group $G$ with Lie algebra $\mathfrak{g}$ act on an $n$-dimensional orbifold $\mathcal{O}$. Then
$$H_G(\mathcal{O})\cong H_\mathfrak{g}(\mathcal{O})$$
as $\sym(\mathfrak{g}^\vee)^{\mathfrak{g}}$-algebras.
\end{theorem}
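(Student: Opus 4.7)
The plan is to imitate the classical Cartan--Weil proof of the equivariant De Rham theorem for manifolds, exchanging the ordinary De Rham theorem at the crucial step for its orbifold counterpart (Theorem \ref{theorem: Satake}). The whole argument hinges on finite-dimensional approximations of $EG$, combined with the quotient orbifold construction and the fact that $\Omega^*(\mathcal{O})$ already behaves as a $G^\star$-algebra under the operators $d$, $\iota_X$, $\mathcal{L}_X$.

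First I would choose a directed system of compact, connected $G$-manifolds $E_k$ carrying \emph{free} $G$-actions and with $E_k$ at least $k$-connected (e.g.\ truncated Stiefel manifolds for classical $G$, or a general Milnor-type construction). Form the product orbifold $E_k \times \mathcal{O}$ with the diagonal $G$-action; since the action on the first factor is free, the combined action is free, hence in particular almost free, so the exercise following the tubular neighborhood theorem for orbifolds produces a quotient orbifold
$$\mathcal{O}_k := (E_k \times \mathcal{O})/\!/G,\qquad |\mathcal{O}_k| = E_k \times_G |\mathcal{O}|.$$
These are finite-dimensional approximations of the Borel space, and a standard Leray--Serre / connectivity argument gives $H^i(|\mathcal{O}_k|,\mathbb{R}) \cong H^i_G(\mathcal{O})$ for all $i$ in an increasing range as $k\to\infty$.

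Next I would apply Theorem \ref{theorem: Satake} to $\mathcal{O}_k$ to get $H^i_{\mathrm{dR}}(\mathcal{O}_k) \cong H^i(|\mathcal{O}_k|,\mathbb{R})$, so that in the stable range $H^i_{\mathrm{dR}}(\mathcal{O}_k) \cong H^i_G(\mathcal{O})$. I would then identify $\Omega^*(\mathcal{O}_k)$ with the $G$-basic subcomplex (horizontal and invariant forms) of $\Omega^*(E_k\times\mathcal{O}) \cong \Omega^*(E_k)\otimes\Omega^*(\mathcal{O})$. This is where the $G^\star$-algebra structure of $\Omega^*(\mathcal{O})$ enters: all local operators $d, \iota_{X^\#}, \mathcal{L}_{X^\#}$ are defined via $\mathscr{H}_\mathcal{O}$-invariant forms on $U_\mathcal{O}$ and therefore satisfy the Cartan identities. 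Because $E_k$ is highly connected and carries a connection, its de Rham complex $\Omega^*(E_k)$ approximates the Weil algebra $W(\mathfrak{g})$ in cohomology up to degree $k$, and the Mathai--Quillen (or Kalkman) isomorphism identifies the basic subcomplex
$$(\Omega^*(E_k) \otimes \Omega^*(\mathcal{O}))^{\mathrm{bas}}$$
with the Cartan complex $C_{\mathfrak{g}}(\mathcal{O})$ in the same range. Letting $k\to\infty$ yields $H^i_G(\mathcal{O}) \cong H^i_\mathfrak{g}(\mathcal{O})$ in every degree. Finally, compatibility with the $\sym(\mathfrak{g}^*)^{\mathfrak{g}}$-algebra structure is automatic: on the topological side it comes from pullback along $\mathcal{O}_G\to BG$, on the algebraic side from the inclusion $\sym(\mathfrak{g}^*)^{\mathfrak{g}} \hookrightarrow C_\mathfrak{g}(\mathcal{O})$, and the comparison maps intertwine them by construction.

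The main obstacle is the rigorous bookkeeping at Step 3: one must verify that the passage to basic forms on the free quotient $(E_k\times\mathcal{O})/\!/G$ really yields $\Omega^*(\mathcal{O}_k)$, and that the Mathai--Quillen argument, originally written for manifolds, remains valid when the ``$\mathcal{O}$-factor'' is an orbifold. Both points reduce to local checks in equivariant orbifold charts $(\widetilde{U},\widetilde{G}_x,\phi)$ provided by the orbifold tubular neighborhood theorem, where everything can be pulled back to an invariant calculation on $E_k\times\widetilde{U}$ and then descended by $\widetilde{G}_x$-invariance. Once these local identifications are written out and glued via a partition of unity, the global statement follows as in the manifold case.
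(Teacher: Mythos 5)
The paper does not actually prove this theorem; it only quotes it from the cited reference, so there is no in-text argument to compare against. Your strategy is the classical finite-dimensional-approximation proof of the equivariant De Rham theorem (Cartan's argument as organized by Guillemin--Sternberg), transplanted to orbifolds, and it is viable: every orbifold-specific ingredient you invoke is available in the text --- products and quotients of orbifolds, Satake's De Rham theorem (Theorem \ref{theorem: Satake}) applied to $\mathcal{O}_k=(E_k\times\mathcal{O})/\!/G$, and the $\mathfrak{g}^\star$-algebra structure on $\Omega^*(\mathcal{O})$. The cited source takes a different route: it uses the presentation of $\mathcal{O}$ as a global quotient $M/\!/K$ of a manifold by an almost free action of a compact connected group $K$ (Proposition \ref{proposition: every orbifold is a quotient}, via the frame bundle), lifts the $G$-action to a commuting $G\times K$-action on $M$, applies the classical equivariant De Rham theorem on $M$, and then strips off the $K$-direction by passing to $K$-basic forms, which recover $\Omega^*(\mathcal{O})$. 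That route uses the manifold theorem as a black box and never has to redo the $EG$-approximation bookkeeping inside the orbifold category; yours is more self-contained topologically but must re-verify that bookkeeping for orbifold quotients.

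Two places in your sketch need genuine work rather than ``local checks in charts''. First, $\Omega^*(E_k\times\mathcal{O})$ is not isomorphic to $\Omega^*(E_k)\otimes\Omega^*(\mathcal{O})$: the algebraic tensor product is only a subcomplex, and you must show that the inclusion of the corresponding basic subcomplexes is a quasi-isomorphism (a Mayer--Vietoris or double-complex argument over a suitable cover of $|\mathcal{O}_k|$, which exists for orbifolds but has to be set up, together with the identification of $\Omega^*(\mathcal{O}_k)$ with the basic subcomplex of $\Omega^*(E_k\times\mathcal{O})$ for the free diagonal action). Second, the Mathai--Quillen/Kalkman isomorphism relates the Cartan complex to the Weil model $(W(\mathfrak{g})\otimes\Omega^*(\mathcal{O}))_{\mathrm{bas}}$, not directly to $(\Omega^*(E_k)\otimes\Omega^*(\mathcal{O}))_{\mathrm{bas}}$; passing from $\Omega^*(E_k)$ to $W(\mathfrak{g})$ requires a connection form on $E_k$ and the standard comparison of acyclic-in-a-range $W^\star$-modules, and you must check that the resulting zig-zag of quasi-isomorphisms respects the $\sym(\mathfrak{g}^*)^{\mathfrak{g}}$-module structure degree by degree before letting $k\to\infty$. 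Neither point is an obstruction --- both are standard in the manifold case, and the orbifold structure enters only through $\Omega^*(\mathcal{O})$ being a $G^\star$-algebra --- but as written they are asserted rather than proved.
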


A \textit{connection} for the $\mathfrak{g}$-action on $\mathcal{O}$ is an invariant element $\theta\in \Omega^1(\mathcal{O})\otimes\mathfrak{g}$ such that $\iota_X\theta=X$ for all $X\in\mathfrak{g}$ or, equivalently, a collection $\theta^i\in \Omega^1(\mathcal{O})$ satisfying $\iota_{X_j}\theta^i=\delta^i_j$, for some basis $\{X_i\}$ of $\mathfrak{g}$, such that
$$C=\spannn\{\theta^1,\dots,\theta^{\dim\mathfrak{g}}\}$$
is $\mathfrak{g}$-invariant. If $G$ is connected, compact and acts almost freely on $\mathcal{O}$, then there is a connection for its induced $\mathfrak{g}$-action (see \cite[Proposition 3.2]{caramello2}). The \textit{curvature} $\mu^\theta:=\dif \theta+\frac{1}{2}[\theta,\theta]_{\mathfrak{g}}\in\Omega^2(\mathcal{O})\otimes\mathfrak{g}$ of $\theta$ defines the Cartan map $C^\theta$:
$$C_{\mathfrak{g}}(\mathcal{O})\ni\omega\mapsto \hor_\theta(\omega(\mu^\theta))\in \Omega_{\bas}(\mathcal{O}),$$
where $\omega(\mu^\theta)$ denotes the differential form obtained from $\omega$ by substituting $\mu^\theta$ for the $\mathfrak{g}$-variable, and $\hor_\theta$ is the horizontal projection with respect to $\theta$. The restriction of $C^\theta$ to $\sym(\mathfrak{g}^\vee)^{\mathfrak{g}}$ is the Chern--Weil homomorphism of the $G$-action on $\mathcal{O}$ (cf. the construction of the Euler class at the end of Section \ref{section:Orbifold Versions of Classical Theorems}). The following theorem is a classical result on equivariant cohomology of $\mathfrak{g}^\star$-algebras (see, e.g., \cite[Theorem 5.2]{meinrenken2}) adapted to our setting.

\begin{theorem}
If $G$ is a connected, compact Lie group acting almost freely on an orbifold $\mathcal{O}$, then the Cartan map (with respect to any $\mathfrak{g}$-connection) descends to an isomorphism
$$H_{\mathfrak{g}}(\mathcal{O})\cong H_{\bas}(\mathcal{O}).$$
\end{theorem}
Hence, in the above situation we have, in view of Proposition \ref{proposition: basic cohomology and quotients} and Theorem \ref{thrm: equivariant De Rham theorem for orbifolds}, $H_G(\mathcal{O})\cong H_{\mathrm{dR}}(\mathcal{O}/\!/G)$.

A remarkable feature of equivariant cohomology is the Borel--Hsiang localization theorem, which, roughly speaking, asserts that the torsion-free part of the structure of $H_{T}(X)$, for a torus space $X$, can be recovered from the fixed point set $X^T$. To introduce this theorem it will be useful to recall the notion of localization from commutative algebra (see, for example, \cite[Chapter 2]{eisenbud}). Given an $R$-module $A$ and a multiplicative subset $S\subset R$ we denote the \textit{localization of $A$ at $S$} by $S^{-1}A$, which consists of $(A\times S)/\sim$, where $(a,s)\sim (a',s')$ if there is $r\in S$ such that $r(s'a+sa')=0$. One think of an equivalence class $(a,s)$ as a fraction $a/s$. In fact, $S^{-1}A$ is an $S^{-1}R$-module with the usual operation rules for fractions. A map of $R$-modules $\varphi:A\to B$ induces a map of $S^{-1}R$-modules $S^{-1}\varphi:S^{-1}A\to S^{-1}B$ by $a/s\mapsto \varphi(a)/s$.

\begin{theorem}[Borel--Hsiang localization for orbifolds {\cite[Corollary 3.1.8]{allday}}]\label{thrm: Borel-Hsiang local for orbifolds}
Let $\mathcal{O}$ be a compact orbifold acted upon by a torus $T$. Then the inclusion $\mathcal{O}^{T}\hookrightarrow\mathcal{O}$ induces an isomorphism of $S^{-1}\sym(\mathfrak{t}^\vee)$-algebras
$$S^{-1}H_{T}(\mathcal{O}) \longrightarrow S^{-1}H_{T}(\mathcal{O}^{T}),$$
where $S=\sym(\mathfrak{t}^\vee)\setminus \{0\}$.
\end{theorem}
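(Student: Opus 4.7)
The plan is to adapt the classical Borel--Hsiang localization argument to the orbifold setting. Since $H_T(\mathcal{O})$ is defined via the Borel construction $ET\times_T|\mathcal{O}|$, much of the argument is topological; what the orbifold theory supplies is the local structure around orbits via the orbifold tubular neighborhood theorem and the submanifold structure of $\mathcal{O}^T$ from Proposition \ref{prop: fixed points suborbifolds}. The key is to reduce the theorem to a vanishing statement: if $X$ is a compact orbifold with a $T$-action and $X^T=\emptyset$, then $S^{-1}H_T(X)=0$.

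To carry out the reduction, I would decompose $|\mathcal{O}|=U\cup V$ where $U$ is a $T$-invariant open tubular neighborhood of $|\mathcal{O}^T|$ and $V$ is the complement of a smaller closed tube around $|\mathcal{O}^T|$. The orbifold tubular neighborhood theorem gives an equivariant deformation retraction $U\to\mathcal{O}^T$, so $H_T(U)\cong H_T(\mathcal{O}^T)$, while $V$ and $U\cap V$ have no $T$-fixed points. Assuming the vanishing $S^{-1}H_T(V)=S^{-1}H_T(U\cap V)=0$, the localized Mayer--Vietoris sequence
$$\cdots\to S^{-1}H_T(\mathcal{O})\to S^{-1}H_T(U)\oplus S^{-1}H_T(V)\to S^{-1}H_T(U\cap V)\to\cdots$$
collapses to the desired isomorphism, and naturality identifies the map with the one induced by the inclusion $\mathcal{O}^T\hookrightarrow\mathcal{O}$; it is a map of $S^{-1}\sym(\mathfrak{t}^*)$-algebras because Mayer--Vietoris respects cup products.

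The crux is the vanishing. By compactness cover a fixed-point-free $T$-orbifold $X$ by finitely many $T$-invariant tubular neighborhoods around orbits, each of the form $T\times_{\widetilde{G}_x}\nu_x Gx$ by the orbifold tubular neighborhood theorem, and each $T$-equivariantly retracting onto its central orbit $Gx$. On the underlying topological level $Gx\cong T/G_x$, so
$$H_T(Gx)\cong H^*(BG_x)$$
as a $\sym(\mathfrak{t}^*)$-algebra, with the module structure given by the restriction $\sym(\mathfrak{t}^*)\to\sym(\mathfrak{g}_x^*)^{\pi_0(G_x)}$. Since there are no fixed points, $\mathfrak{g}_x\subsetneq\mathfrak{t}$ at every $x$, so this restriction has a nonzero kernel, and any nonzero element of this kernel lies in $S$ and annihilates $H^*(BG_x)$. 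Hence $S^{-1}H_T(Gx)=0$, and by induction along the finite cover using Mayer--Vietoris (and the exactness of $S^{-1}(-)$) one obtains $S^{-1}H_T(X)=0$.

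The main obstacle is technical rather than conceptual: one must ensure that $|\mathcal{O}|$ is sufficiently well-behaved (a finite $T$-CW complex, via the compatible triangulation theorem) for Mayer--Vietoris and the Leray--Serre spectral sequence of the Borel fibration to apply with real coefficients, and that the retractions supplied by the orbifold tubular neighborhood theorem descend coherently to $T$-equivariant homotopies on underlying spaces. Since $H_T$ sees only $|\mathcal{O}|$, these checks are routine once the orbifold tubular neighborhood theorem and the triangulation result have been invoked to supply the required local and global topological hypotheses.
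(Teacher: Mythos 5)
The paper does not actually prove this theorem: it is quoted directly from Allday--Puppe \cite[Corollary 3.1.8]{allday}, so there is no in-text argument to compare yours against. On its own merits, your proposal is essentially correct and is the standard proof of the classical Borel--Hsiang localization theorem, correctly transplanted to orbifolds. The key observation making the transplant work is the one you implicitly use throughout: $H_T(\mathcal{O})$ is by definition the singular cohomology of $ET\times_T|\mathcal{O}|$, so the entire argument lives on the underlying compact $T$-space $|\mathcal{O}|$, and the orbifold theory only needs to supply (a) that $\mathcal{O}^T$ is a nice closed invariant subspace admitting an invariant tubular neighborhood that equivariantly retracts onto it, and (b) invariant tubes $T\times_{G_x}S_x$ around orbits away from the fixed set. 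Your vanishing argument is sound: for $x$ with $G_x\subsetneq T$ one has $H_T(Tx)\cong H^*(BG_x;\mathbb{R})$ with $\sym(\mathfrak{t}^*)$-module structure factoring through $\sym(\mathfrak{t}^*)\to H^*(BG_x;\mathbb{R})$, whose kernel contains a nonzero linear form vanishing on $\mathfrak{g}_x$ (nonzero because a closed subgroup of the connected group $T$ with full Lie algebra is all of $T$), and that element inverts to zero. Two small points you should make explicit. First, the tubular neighborhood theorem quoted in the paper is for a single orbit; for the set $U$ retracting onto all of $\mathcal{O}^T$ you need an invariant tube around the whole fixed suborbifold (obtained, e.g., from the normal exponential map of an invariant metric). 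Second, in the Mayer--Vietoris induction over the finite cover you need that the annihilating element for a tube also annihilates $H_T$ of every invariant open subset of that tube (true, since the element already maps to zero under the unit map $\sym(\mathfrak{t}^*)\to H_T(U_i)$ and hence under any further restriction), or alternatively use the relative cup-product trick to show the product of the annihilators kills $H_T(X)$. With those remarks the argument closes; compactness guarantees the finite cover and finitely many orbit types, so your appeal to a finite $T$-CW structure is not even strictly necessary.
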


We mention here, without going into much details, that this theorem has a ``concrete counterpart'' expressed in terms of integration of $d_{\mathfrak{t}}$-closed equivariant forms, known as the Atiyah--Bott--Berligne--Vergne localization formula.

\begin{theorem}[ABBV localization for orbifolds {\cite[Theorem 2.1]{meinrenken}}]
Let $\mathcal{O}$ be a connected, compact, oriented orbifold with a smooth action of a torus $T$ and let $[\omega]\in H_{\mathfrak{t}}(\mathcal{O})$. Then
$$\int_\mathcal{O}\omega=\sum_{\mathcal{C}}\frac{1}{m_{\mathcal{C}}}\int_{\mathcal{C}}\frac{i_{\mathcal{C}}^*\omega}{\mathrm{e}_{\mathfrak{t}}(\nu_{\mathcal{C}})}$$
in the fraction field $(\sym(\mathfrak{t}^\vee)\setminus \{0\})^{-1}\sym(\mathfrak{t}^\vee)$, where the sum runs over the connected components $\mathcal{C}$ of $\mathcal{O}^T$ and $\mathrm{e}_{\mathfrak{t}}(\nu_{\mathcal{C}})$ is the equivariant Euler form of the normal bundle $\nu_{\mathcal{C}}$.
\end{theorem}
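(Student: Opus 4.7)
The plan is to mimic the classical Atiyah--Bott--Berline--Vergne proof in the Cartan model, carefully tracking how the orbifold integration convention $\int_U\omega=(1/|H|)\int_{\widetilde U}\tilde\omega$ introduces the multiplicity factors $1/m_\mathcal{C}$. By Theorem~\ref{thrm: equivariant De Rham theorem for orbifolds}, I represent $\omega$ by a genuine $d_\mathfrak{t}$-closed element of $C_\mathfrak{t}(\mathcal{O})$, and by averaging I fix a $T$-invariant Riemannian metric on $\mathcal{O}$.

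The first step is to concentrate the integral near $\mathcal{O}^{T}$. Fix a generic $X\in\mathfrak{t}$ whose fundamental vector field $X^{\#}$ vanishes exactly on $\mathcal{O}^{T}$, and consider the $T$-invariant one-form $\theta=g(X^{\#},\cdot)/g(X^{\#},X^{\#})$ on $\mathcal{O}\setminus\mathcal{O}^{T}$. In any orbifold chart $\theta$ lifts to a $\Gamma_{x}$-invariant one-form with $\iota_{X}\theta=1$, and a standard algebraic manipulation produces $\beta\in C_\mathfrak{t}(\mathcal{O}\setminus\mathcal{O}^{T})$ with $d_\mathfrak{t}\beta=\omega$ there (one may take $\beta=\theta\wedge\omega\wedge\sum_{k\ge 0}(d\theta)^{k}$, a finite sum by dimension). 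A $T$-invariant partition of unity and Stokes' theorem for orbifolds then reduce $\int_\mathcal{O}\omega$ to a sum over the components $\mathcal{C}$ of $\mathcal{O}^{T}$ of integrals over the boundaries of arbitrarily small tubular neighborhoods of each $\mathcal{C}$.

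Next, the local computation near each $\mathcal{C}$. By the tubular neighborhood theorem a neighborhood is equivariantly diffeomorphic to (the disk bundle of) the normal orbibundle $\pi\colon\nu_\mathcal{C}\to\mathcal{C}$, whose fibers carry a linear action of the generic isotropy group $\Gamma$ of order $m_\mathcal{C}$. Build a $T$-equivariant Thom form $\tau\in C_\mathfrak{t}(\nu_\mathcal{C})$ by the Mathai--Quillen recipe inside $(T\times\Gamma)$-equivariant local charts; the Gaussian expression is $\Gamma$-invariant and so descends to $\nu_\mathcal{C}$. The classical identities $\iota_\mathcal{C}^{*}\tau=\mathrm{e}_\mathfrak{t}(\nu_\mathcal{C})$ and $\pi^{*}\alpha\wedge\tau\sim_{d_\mathfrak{t}}\alpha$ (the equivariant Thom isomorphism) hold verbatim at the level of $\Gamma$-invariant forms on local charts. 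Combining $\omega-\pi^{*}\iota_\mathcal{C}^{*}\omega=d_\mathfrak{t}\gamma$ with fiber integration of $\tau$ against $\omega/\mathrm{e}_\mathfrak{t}(\nu_\mathcal{C})$, and invoking the orbifold integration rule along the fibers, one gets the contribution of $\mathcal{C}$ as $(1/m_\mathcal{C})\int_\mathcal{C}\iota_\mathcal{C}^{*}\omega/\mathrm{e}_\mathfrak{t}(\nu_\mathcal{C})$; summing over $\mathcal{C}$ yields the stated formula in the fraction field.

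The main obstacle is the bookkeeping that isolates $1/m_\mathcal{C}$: the Mathai--Quillen Thom form, the equivariant Euler form and the identity $\pi_{*}\tau=1$ are all defined via $\Gamma$-equivariant formulas on local charts and carry no explicit trace of $m_\mathcal{C}$, so the multiplicity arises purely from the fact that the fiber integral of a chart-lifted form $\tilde\tau$ yields, via $\int=|\Gamma|^{-1}\int_{\widetilde{\,\cdot\,}}$, the factor $1/m_\mathcal{C}$. A clean sanity check restricts everything to the regular stratum of $\mathcal{C}$: there $\nu_\mathcal{C}$ is an honest vector bundle over a manifold covered $m_\mathcal{C}$-to-$1$ by its uniformizing chart, classical ABBV applies to the lift, and the division by $|\Gamma|=m_\mathcal{C}$ produces exactly the stated coefficient; the extension to all of $\mathcal{C}$ then follows by continuity, both sides being smooth rational functions of the base point.
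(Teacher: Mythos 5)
The paper does not prove this theorem; it only states it with a citation to Meinrenken, whose own argument runs through presenting the orbifold as a global quotient (cf.\ Proposition \ref{proposition: every orbifold is a quotient}) and applying manifold-level equivariant localization upstairs. Your route is instead the direct Berline--Vergne/Atiyah--Bott argument transplanted to charts, and it is sound: the form $\theta=\mathrm{g}(X^{\#},\cdot)/\|X^{\#}\|^{2}$ is a legitimate $\mathscr{H}_{\mathcal{O}}$-invariant object on $\mathcal{O}\setminus\mathcal{O}^{T}$, the telescoping identity $d_{\mathfrak{t}}\bigl(\theta\wedge\omega\wedge\sum_{k}(d\theta)^{k}\bigr)=\pm\omega$ goes through verbatim on invariant forms, and Stokes' theorem for orbifolds (stated in the paper) reduces everything to the tubes. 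Most importantly, you have correctly located the only place where the orbifold structure leaves a trace: the Mathai--Quillen form and the Chern--Weil Euler form are chart-level ($\Gamma$-invariant) objects with no $m_{\mathcal{C}}$ in them, while the convention $\int_{U}=|H|^{-1}\int_{\widetilde{U}}$ turns the chart-fiber identity $\int_{V}\tilde{\tau}=1$ into $\pi_{*}\tau=1/m_{\mathcal{C}}$, which is exactly the stated coefficient. Your final ``sanity check'' can in fact be promoted to the actual argument: since $\mathcal{O}_{\mathrm{sing}}$ and the singular strata of $\mathcal{C}$ have measure zero, all integrals are computed on the regular part of $\mathcal{C}$, where the local model is literally $(\widetilde{W}\times V)/\!/\Gamma$ with $|\Gamma|=m_{\mathcal{C}}$ acting trivially on $\widetilde{W}$; no continuity argument is needed.

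Two points deserve more care in a full write-up. First, the existence of a generic $X\in\mathfrak{t}$ with $\Zero(X^{\#})=\mathcal{O}^{T}$ requires finiteness of isotropy types (true for a compact orbifold) and the observation that a tangent vector $[v]\in C_{x}|\mathcal{O}|$ vanishes iff its lift does, so that zeros of $X^{\#}$ at singular points are detected in charts. Second, invertibility of $\mathrm{e}_{\mathfrak{t}}(\nu_{\mathcal{C}})$ after inverting $\sym(\mathfrak{t}^{*})\setminus\{0\}$ needs the remark that the normal weights are nonzero; for orbifolds these are weights of the finite covering torus $\widetilde{T}_{x}$ and may be fractional multiples of weights of $T$, but they never vanish since $T$ has no fixed normal directions. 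Neither issue threatens the argument.
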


\chapter{Riemannian geometry of orbifolds}\label{section: riemannian orbifolds}

In this final chapter we will endow orbifolds with riemannian metrics and see generalizations of the basic elements of Riemannian geometry to this setting. In the last section we present a collection of orbifold versions of classical theorems of Riemannian geometry.

\section{Riemannian metrics}\label{section:riemannian metrics on orbifolds}

A \textit{Riemannian metric} on an orbifold $\mathcal{O}$ is a symmetric, positive tensor field $\mathrm{g}\in\bigotimes_0^2(\mathcal{O})$. The pair $(\mathcal{O},\mathrm{g})$ is then a \textit{Riemannian orbifold}. As already mentioned for tensor fields in general, in a chart $(\widetilde{U},H,\phi)$ a Riemannian metric yields an $H$-invariant Riemannian metric on $\widetilde{U}$, and so we have an induced inner product $\proin{\cdot}{\cdot}_x:=\mathrm{g}_x(\cdot,\cdot)$ on $T_x\mathcal{O}$ for each $x$. Also, the embeddings between charts of $\mathcal{O}$ become isometries and the pseudogroup $\mathscr{H}_{\mathcal{O}}$ becomes, with the induced Riemannian metric in $U_\mathcal{O}$, a pseudogroup of local isometries. Hence, the Levi-Civita connection $\nabla$ on $U_\mathcal{O}$ is invariant by the changes of charts and thus can be seen as a covariant derivative on $T\mathcal{O}$. Parallel transport along $\mathscr{H}_{\mathcal{O}}$-paths is defined accordingly, and when restricted to $\mathscr{H}_{\mathcal{O}}$-loops based at $\tilde{x}$ they define the holonomy group $\mathrm{Hol}_x<\mathrm{O}(T_x\mathcal{O})$ (up to conjugation with elements of $\Gamma_x$). The metric defines a natural Riemannian volume measure $\vol$ on $\mathcal{O}$ which, for a measurable subset $K\subset U$ on a chart $(\widetilde{U},H,\phi)$, is given by
$$\vol(K)=\frac{1}{|H|}\vol_{\widetilde{U}}\phi^{-1}(K),$$
where $\vol_{\widetilde{U}}$ is the usual Riemannian volume measure induced on $\widetilde{U}$ by $\mathrm{g}$. In particular, $\vol(|\mathcal{O}|)$ coincides with the volume of the Riemannian manifold $\mathcal{O}_\mathrm{reg}$. 

\begin{proposition}[Existence of Riemannian metrics {\cite[Proposition 2.20]{mrcun}}]\label{prop: orbifold admits a Riemannian metric}
Any smooth orbifold admits a Riemannian metric.
\end{proposition}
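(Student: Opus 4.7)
The plan is to mimic the standard manifold construction via partitions of unity, with one extra averaging step to enforce the invariance required by the chart group. First I would pass to a locally finite atlas $\{(\widetilde{U}_i, H_i, \phi_i)\}_{i \in I}$ supporting a smooth partition of unity $\{\xi_i\}$ subordinate to the cover $\{U_i\}$, as guaranteed by the partition of unity lemma stated earlier. For each $i$, I would pick any Riemannian metric $\tilde{\mathrm{g}}_i$ on the open set $\widetilde{U}_i \subset \mathbb{R}^n$ (for instance the restriction of the Euclidean metric).

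The second step is the averaging trick, which is available precisely because each $H_i$ is finite. I define
$$\tilde{\mathrm{g}}_i^{H_i} := \frac{1}{|H_i|} \sum_{h \in H_i} h^* \tilde{\mathrm{g}}_i,$$
which is manifestly an $H_i$-invariant, symmetric, positive-definite $(0,2)$-tensor field on $\widetilde{U}_i$. By the correspondence between tensor fields on $\mathcal{O}$ and $H_i$-invariant tensor fields on each chart, $\tilde{\mathrm{g}}_i^{H_i}$ defines a local Riemannian metric $\mathrm{g}_i \in \bigotimes_0^2(\mathcal{O}|_{U_i})$.

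Next I would patch: set
$$\mathrm{g} := \sum_{i \in I} \xi_i\, \mathrm{g}_i,$$
interpreting each summand as a globally defined symmetric $(0,2)$-tensor field (extended by zero outside $U_i$, which is legitimate since $\supp(\xi_i) \subset U_i$). Local finiteness of the cover makes the sum well-defined and smooth. Symmetry is clear, and positive-definiteness at a point $x$ follows because at least one $\xi_i(x) > 0$ (the partition sums to one), the corresponding $\mathrm{g}_i$ is positive-definite at $x$, and non-negative combinations of positive semi-definite forms with at least one positive-definite summand are positive-definite.

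The only subtlety — and what I would expect to be the main thing to justify carefully — is the compatibility of the locally defined $\mathrm{g}_i$ on overlaps, which is needed for the sum $\sum_i \xi_i \mathrm{g}_i$ to make sense as a section of $\bigotimes_0^2 \mathcal{O}$. In the manifold case this is automatic; here one has to check that an $H_i$-invariant tensor on $\widetilde{U}_i$ really descends to a well-defined tensor on the orbifold open set $\mathcal{O}|_{U_i}$, and hence can be added to analogous tensors coming from other charts. This is exactly the content of the natural correspondence between $\mathscr{H}_{\mathcal{O}}$-invariant tensor fields on $U_\mathcal{O}$ and tensor fields on $\mathcal{O}$ recalled earlier, so invoking it closes the argument.
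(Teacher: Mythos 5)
Your proof is correct and follows essentially the same route as the paper's: a locally finite atlas with a subordinate partition of unity, an arbitrary metric on each chart domain, averaging over the finite chart group to force invariance, and patching. The paper merely packages the averaging and the patching into a single formula on each $\widetilde{U}_i$ (pulling back via the embeddings $h\circ\lambda_j$, which makes the independence of the choice of embedding explicit) and leaves the $\mathscr{H}_{\mathcal{A}}$-invariance check as an exercise --- precisely the point you flag as the remaining subtlety.
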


\begin{proof}
Choose a locally finite atlas $\mathcal{A}=\{\widetilde{U}_i,H_i,\phi_i\}$, a subordinate partition of unity $\xi_i\in C^\infty(\mathcal{O})$ and an arbitrary Riemannian metric $\proin{\cdot}{\cdot}^i$ on each $\widetilde{U}_i$. Now define a new Riemannian metric $\mathrm{g}^i$ as follows: for each $\tilde{x}\in\widetilde{U}_i$ and each $v,w\in T_{\tilde{x}}\widetilde{U}_i$, put
$$\mathrm{g}^i_{\tilde{x}}(v,w):=\sum_j\xi_j(\phi_i(\tilde{x}))\sum_{h\in H_j}\proin{\dif(h\circ\lambda_j)_{\tilde{x}}v}{\dif(h\circ\lambda_j)_{\tilde{x}}w}^j_{h\lambda_j(\tilde{x})},$$
where $\lambda_j:(\widetilde{V},(H_i)_{\widetilde{V}},(\phi_i)|_{\widetilde{V}})\hookrightarrow (\widetilde{U}_j,H_j,\phi_j)$ is any chart embedding defined on an open $H_i$-invariant neighborhood of $\widetilde{V}\subset\widetilde{U}_i$ of $\tilde{x}$. The reader is invited to check that this defines the desired metric.\end{proof}

\begin{exercise}
Check that the collection $\mathrm{g}^i$ defines an $\mathscr{H}_{\mathcal{A}}$-invariant Riemannian metric on $U_\mathcal{A}$, and hence a Riemannian metric on $\mathcal{O}$.
\end{exercise}

A map $f:(\mathcal{O},\mathrm{g}^{\mathcal{O}})\to(\mathcal{P},\mathrm{g}^{\mathcal{P}})$ is a \textit{local isometry} if $f^*(\mathrm{g}^{\mathcal{P}})=\mathrm{g}^{\mathcal{O}}$ and $\dim\mathcal{O}=\dim\mathcal{P}$. If $\f$ is also a diffeomorphism then it is an \textit{isometry}. One has, in analogy to the manifold case, the following.

\begin{theorem}[Local isometries are coverings {\cite[Lemma 2.18]{kleiner}}]
If $f:\mathcal{O}\to\mathcal{P}$ is a local isometry, with $\mathcal{O}$ complete\footnote{See Section \ref{section: length structure}.} and $\mathcal{P}$ connected, then $f$ is a covering map.
\end{theorem}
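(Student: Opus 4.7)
The plan is to adapt the classical argument for local isometries of complete Riemannian manifolds, with careful attention to the isotropy data of the charts. First I would establish, as a key lemma, that any (piecewise smooth) path $\gamma:[0,1]\to|\mathcal{P}|$ starting at $|f|(x_0)$ admits a unique lift $\tilde{\gamma}:[0,1]\to|\mathcal{O}|$ with $\tilde{\gamma}(0)=x_0$ and $|f|\circ\tilde{\gamma}=\gamma$. Locally this follows from the fact that a local isometry between equidimensional Riemannian orbifolds admits smooth local lifts $\tilde{f}_x:\widetilde{U}\to\widetilde{V}$ that are isometries between charts, so one inverts $\tilde{f}_x$ locally; the standard maximal-interval argument plus completeness of $\mathcal{O}$ (which forbids the lift from running off to infinity in finite parameter time, as arc length is preserved) shows the lift extends over $[0,1]$. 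As a corollary $|f|$ is surjective, since $|\mathcal{P}|$ is path-connected (connected orbifolds have connected, hence path-connected, underlying spaces).

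Second, I would construct an evenly covered chart around an arbitrary $y\in|\mathcal{P}|$. Choose a lift $\tilde{y}$ and a radius $r>0$ smaller than the injectivity radius of the Riemannian metric at $\tilde{y}$, so that $\widetilde{V}:=B_r(0)\subset T_{\tilde{y}}\mathcal{P}$, together with the linear $\Gamma_y$-action and $\psi:=\phi_y\circ\exp_{\tilde{y}}$, gives a normal chart $(\widetilde{V},\Gamma_y,\psi)$ around $y$. For each $x\in|f|^{-1}(y)$, the local isometry property gives a local lift $\tilde{f}_x$ whose differential $df_x:T_x\mathcal{O}\to T_{\tilde{y}}\mathcal{P}$ is a linear isometry, intertwining the actions of $\Gamma_x$ and $\overline{f}_x(\Gamma_x)<\Gamma_y$; here $\overline{f}_x$ is injective because if $g\in\ker\overline{f}_x$ then $\tilde{f}_x$ collapses the $g$-orbit, but $\tilde{f}_x$ is a local diffeomorphism and $\Gamma_x$ acts linearly and effectively in a linear chart, so $g=e$. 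By completeness $\exp_x$ is defined on all of $T_x\mathcal{O}$, and identifying $B_r(0)\subset T_x\mathcal{O}$ with $\widetilde{V}$ via $df_x$ produces an orbifold chart $(\widetilde{V},H_x,\psi_x)$ for the neighborhood $W_x:=\psi_x(\widetilde{V})$ of $x$, with $H_x=\overline{f}_x(\Gamma_x)<\Gamma_y$, with respect to which $f$ lifts to the identity on $\widetilde{V}$ and $\overline{f}:H_x\hookrightarrow\Gamma_y$ is the inclusion. This is exactly the data required by the definition of orbifold covering.

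Third, I would prove $|f|^{-1}(\psi(\widetilde{V}))=\bigsqcup_{x\in|f|^{-1}(y)}W_x$. For coverage, given any $z\in|f|^{-1}(\psi(\widetilde{V}))$, write $|f|(z)=\psi(\tilde{v})$ for some $\tilde{v}\in\widetilde{V}$, let $\tilde{\gamma}_{\tilde{v}}$ be the radial geodesic in $\widetilde{V}$ from $\tilde{v}$ to the origin, push it to a geodesic $\gamma$ in $|\mathcal{P}|$ from $|f|(z)$ to $y$, and apply path lifting from $z$; the endpoint lies in $|f|^{-1}(y)$, and by construction $z\in W_{\text{endpoint}}$. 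For disjointness, if $z\in W_{x_1}\cap W_{x_2}$ then the two radial reversed-geodesic lifts from $z$ (determined by the two charts) solve the same lifting problem and must coincide by uniqueness of lifts, forcing $x_1=x_2$.

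The main obstacle I anticipate is the uniqueness part of the path-lifting lemma in the presence of nontrivial isotropy: distinct local lifts of $f$ near a point can differ by an element of $\Gamma_{|f|(x)}$, so one must phrase uniqueness at the level of the underlying spaces $|\mathcal{O}|$ and $|\mathcal{P}|$ (where the lift really is determined) rather than at the level of chart lifts, and then use equivariance of $\exp$ under the isotropy to see that composing $df_x$ with any $h\in\Gamma_y$ merely reparametrizes the same chart $W_x$ and so produces no new branches in the preimage. Once this is handled cleanly, the rest reduces to formal bookkeeping with the definition of orbifold covering given earlier in the text.
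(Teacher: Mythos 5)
The paper itself does not prove this theorem --- it defers entirely to Kleiner--Lott \cite[Lemma 2.18]{kleiner} --- so your proposal can only be measured against the standard argument, whose overall architecture (normal chart at $y$, the identity $f\circ\exp_x=\exp_y\circ\dif f_x$, completeness to define $\exp_x$ on all of $B_r(0)$, then an even-covering argument) you have correctly reproduced. The existence half of your path-lifting lemma, the surjectivity of $|f|$, the injectivity of $\overline{f}_x$, and the construction of the charts $(\widetilde{V},\overline{f}_x(\Gamma_x),\psi_x)$ are all sound.

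The genuine gap is the \emph{uniqueness} clause of your path-lifting lemma, which is false for orbifolds and on which your disjointness step rests. Take $f:\mathbb{R}^2\to\mathbb{R}^2/\!/\mathbb{Z}_p$ and a path in the cone that runs radially into the cone point and back out along a different ray: it has $p$ distinct continuous lifts through a given starting point, all at the level of the underlying spaces. So your proposed resolution of the ``main obstacle'' --- that the lift ``really is determined'' on $|\mathcal{O}|$ and only the chart-level lifts are ambiguous --- inverts the actual situation: at any point $z$ where $\overline{f}_z(\Gamma_z)\subsetneq\Gamma_{|f|(z)}$ the map $\dif f_z:C_z|\mathcal{O}|\to C_{|f|(z)}|\mathcal{P}|$ of tangent cones fails to be injective, and a curve through $|f|(z)$ acquires several germs of lifts through $z$. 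This is exactly the configuration that arises in your disjointness argument: if $z\in W_{x_1}\cap W_{x_2}$, the two reversed radial geodesics are lifts of the same downstairs geodesic starting at $z$, but nothing you have said prevents them from having distinct initial velocities in $C_z|\mathcal{O}|$ with the same image under $\dif f_z$, hence from being genuinely different geodesics ending at $x_1\neq x_2$. (In the model case $\mathbb{R}^2\to\mathbb{R}^2/\!/\mathbb{Z}_p$ with $y$ regular this configuration is excluded only because any such branch point lies at distance at least the orbifold injectivity radius of $y$ from the fiber, i.e.\ outside every $W_{x_i}$; your choice of $r$ below the injectivity radius ``at $\tilde y$'' in a chart does not see this, since that chart may be flat with huge injectivity radius.) To close the gap you should replace unique path lifting by the uniqueness statement that is actually available --- item (iii) of the Hopf--Rinow theorem, uniqueness of geodesics with prescribed initial vector in the tangent cone --- argue via a first-divergence-time argument and smoothness of geodesics that two lifts of the same radial geodesic can only branch at $t=0$, and then rule out branching at $t=0$ by showing that a point $z$ with $\dif f_z$ non-injective on cones forces $|f|(z)$ to carry a local group strictly larger than $\overline{f}_z(\Gamma_z)$ and hence forces $d(z,|f|^{-1}(y))$ to exceed $r$ once $r$ is taken below the orbifold injectivity radius of $y$ in the length space $(|\mathcal{P}|,d)$. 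With that repair the rest of your outline goes through.
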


The presence of a Riemannian metric on $\mathcal{O}$ enables us to define the orthonormal frame bundle of $\mathcal{O}$, as follows. If $(\widetilde{U},H,\phi)$ is a chart of $\mathcal{O}$, consider the orthogonal frame bundle $\widetilde{U}^\Yup$ with the induced action of $H$ by $h(\tilde{x},B)=(h(\tilde{x}),\dif h_{\tilde{x}}B)$. This is actually a free action, therefore $\widetilde{U}^\Yup/H$ is a manifold that inherits a proper, effective and almost free $\mathrm{O}(n)$-action from the action of $\mathrm{O}(n)$ on $\widetilde{U}^\Yup$. Taking the quotient by this action we obtain the natural projection $\widetilde{U}^\Yup/H\to U$. The manifolds $\widetilde{U}^\Yup/G$ glue together to form a manifold $\mathcal{O}^\Yup$, the \textit{orthonormal frame bundle} of $\mathcal{O}$. With the natural projection, it defines an orbibundle $\mathcal{O}^\Yup\to\mathcal{O}$ (for more details, see \cite{adem}, Section 1.3, and \cite{mrcun}, Section 2.4).

An orientation of $\mathcal{O}$ corresponds to a decomposition $\mathcal{O}^\Yup=\mathcal{O}^\Yup_+\sqcup\mathcal{O}^\Yup_-$. Then $\mathcal{O}^\Yup_+$ is an $\mathrm{SO}(n)$-orbibundle over $\mathcal{O}$. A key point of the construction above is that the quotient orbifold $\mathcal{O}^\Yup_+/\!/ \mathrm{SO}(n)$ is isomorphic to $\mathcal{O}$ (see \cite{mrcun}, Proposition 2.22). The orientability is needed so that we have an action of the \emph{connected} Lie group $\mathrm{SO}(n)$ inheriting $\mathcal{O}$ as a quotient, which ensures that the holonomy of an orbit matches the corresponding isotropy group. A similar construction can be carried over for non-orientable orbifolds by first taking the complexification $\widetilde{U}^\Yup\otimes\mathbb{C}$, which leads to an $\mathrm{U}(n)$-orbibundle $\mathcal{O}^\Yup_{\mathbb{C}}$ \label{page: orbi frame bundle} over $\mathcal{O}$. Moreover, the Riemannian metric $\mathrm{g}$ on $\mathcal{O}$ induces a Riemannian metric on $\mathcal{O}^\Yup_{\mathbb{C}}$ such that $\mathrm{U}(n)$ acts by isometries and $\mathcal{O}$ is isometric to the quotient $\mathcal{O}^\Yup_{\mathbb{C}}/\!/\mathrm{U}(n)$. Hence there is the following converse to Proposition \ref{prop: quotient orbifolds}.

\begin{proposition}[Every orbifold is a quotient {\cite[Proposition 5.21]{alex}} {\cite[Proposition 2.23]{mrcun}}]\label{proposition: every orbifold is a quotient}
Every Riemannian orbifold is isometric to the quotient space of an almost free isometric action of a compact connected Lie group.
\end{proposition}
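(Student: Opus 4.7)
The plan is to upgrade the orthonormal frame bundle construction already sketched above (on p.~\pageref{page: orbi frame bundle}) to an \emph{isometric} statement, so that $\mathcal{O}$ itself is recovered as a Riemannian quotient. First I would fix a Riemannian metric $\mathrm{g}$ on $\mathcal{O}$, guaranteed by Proposition \ref{prop: orbifold admits a Riemannian metric}, and work chart by chart. On a chart $(\widetilde{U},H,\phi)$, because $H$ acts by isometries (this is exactly the content of $\mathrm{g}$ being a tensor field on $\mathcal{O}$), the induced action $h\cdot(\tilde{x},B)=(h\tilde{x},\dif h_{\tilde{x}} B)$ on the orthonormal frame bundle $\widetilde{U}^{\Yup}$ is \emph{free}: if $h$ fixes a frame $B$ at $\tilde x$, then $\dif h_{\tilde x}=\id$, and since $h$ is an isometry this forces $h=\id$ near $\tilde x$, hence $h=\id$ by effectiveness of the chart group. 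Thus $\widetilde{U}^{\Yup}/H$ is a genuine manifold.

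Next I would glue these local manifolds using the chart embeddings, which are isometries and therefore lift canonically to equivariant bundle maps between the local frame bundles, producing a global manifold $\mathcal{O}^\Yup$. The residual right $\mathrm{O}(n)$-action on each $\widetilde{U}^\Yup$ commutes with the $H$-action and descends to $\mathcal{O}^\Yup$; it is proper (since $\mathrm{O}(n)$ is compact), effective, and almost free---the isotropy at a frame $[(\tilde x,B)]$ is conjugate via $B$ to the isotropy representation image $\Gamma_x<\mathrm{O}(n)$, which is finite. Under this action, $\mathcal{O}^\Yup/\!/\mathrm{O}(n)\cong \mathcal{O}$ as orbifolds, with the tubular-neighborhood/slice picture of Proposition \ref{prop: quotient orbifolds} giving back the original charts.

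To arrange \emph{connectedness} of the acting group: in the orientable case, $\mathcal{O}^\Yup$ splits as $\mathcal{O}^\Yup_+\sqcup\mathcal{O}^\Yup_-$, and on $\mathcal{O}^\Yup_+$ only the connected group $\mathrm{SO}(n)$ acts, still properly, effectively, and almost freely, with $\mathcal{O}^\Yup_+/\!/\mathrm{SO}(n)\cong\mathcal{O}$. In the non-orientable case this decomposition does not exist, so I would replace $\mathcal{O}^\Yup$ by the \emph{unitary} frame bundle $\mathcal{O}^\Yup_\mathbb{C}$ associated to the complexified tangent bundle $T\mathcal{O}\otimes\mathbb{C}$: since $\mathrm{U}(n)$ is connected, the same local freeness argument applies (a unitary transformation fixing a unitary frame is the identity on the complexified tangent space, hence the underlying isometry is the identity), and one obtains a proper, effective, almost free action of the compact connected group $\mathrm{U}(n)$ with quotient $\mathcal{O}$.

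Finally, for the \emph{isometry} statement I would equip $\mathcal{O}^\Yup$ (or $\mathcal{O}^\Yup_{\mathbb{C}}$) with a Riemannian metric built in the standard Kaluza--Klein fashion: take the principal connection on $\widetilde{U}^\Yup\to \widetilde U$ induced by the Levi-Civita connection of $\mathrm{g}$, declare horizontal and vertical subspaces orthogonal, use the pullback of $\mathrm{g}$ on the horizontal distribution and a bi-invariant metric on $\mathrm{O}(n)$ (respectively $\mathrm{U}(n)$) on the vertical distribution. Since the Levi-Civita connection is invariant under the chart pseudogroup $\mathscr{H}_\mathcal{O}$, this metric glues to a global Riemannian metric on the frame bundle for which the structure group acts by isometries and the induced metric on the quotient orbifold is precisely $\mathrm{g}$. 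I expect the main technical obstacle to be the non-orientable case: one must check that freeness of the $H$-action survives complexification (which does require using that $H$ acts by real isometries, not merely by unitary transformations of the complexified bundle) and that the Kaluza--Klein-type metric descends correctly. Everything else is a direct globalization of the local charts constructed in the proof of Proposition \ref{prop: quotient orbifolds}.
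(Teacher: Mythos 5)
Your proposal is correct and follows essentially the same route as the text: the paper's justification is precisely the orthonormal frame bundle construction described just before the statement, with the $\mathrm{SO}(n)$-action on $\mathcal{O}^\Yup_+$ in the orientable case and the passage to the unitary frame bundle $\mathcal{O}^\Yup_{\mathbb{C}}$ with its $\mathrm{U}(n)$-action in general, the metric being induced so that the structure group acts by isometries and $\mathcal{O}\cong\mathcal{O}^\Yup_{\mathbb{C}}/\!/\mathrm{U}(n)$ isometrically. Your freeness argument (an isometry fixing a point and a frame there is the identity) and your treatment of the connectedness issue via complexification are exactly the points the paper emphasizes.
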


\section{Geodesics and the induced length structure}\label{section: length structure}

If $\mathcal{O}$ is Riemannian and $\gamma:[a,b]\to\mathcal{O}$ is a piecewise smooth curve, its \textit{length} can be defined as
$$\ell(\gamma):=\int_a^b\|\gamma'(t)\|dt.$$
We induce the length structure\footnote{Recall that a metric space is a \textit{length space} when the distance between any two points can be realized as the infimum of the lengths of all rectifiable curves connecting those points.} $d(x,y)=\inf\ell(\gamma)$ on $|\mathcal{O}|$, where the infimum is taken amongst all piecewise smooth curves connecting $x$ and $y$. We can then consider the \textit{diameter} of $\mathcal{O}$, i.e., the diameter of $(|\mathcal{O}|,d)$, denoted $\diam(|\mathcal{O}|)$. We say that $\mathcal{O}$ is \textit{complete} when $(|\mathcal{O}|,d)$ is a complete metric space.

\begin{remark}[Orbifolds as metric spaces]\label{Remark: orbifolds as metric spaces}
There is an alternative definition of Riemannian orbifolds in terms of metric spaces, due to A.~Lytchak. In fact, one could define an $n$-dimensional Riemannian orbifold as a length space $\mathcal{O}$ such that for any point $x\in\mathcal{O}$ there exists an open neighborhood $U\ni x$, a connected $n$-dimensional Riemannian manifold $M$ and a finite group $G<\mathrm{Iso}(M)$ such that $U$ and $M/G$ are isometric as metric spaces. Details on this approach can be seen in \cite{lange}.
\end{remark}

A smooth curve $\gamma:I\to\mathcal{O}$ is a \textit{geodesic} if $\nabla_{\gamma'}\gamma'=0$, that is,  if it lifts in local charts to curves satisfying the geodesic equation. Any locally minimizing curve between two points is a geodesic.

\begin{theorem}[Hopf--Rinow theorem for orbifolds]
For a connected Riemannian orbifold $\mathcal{O}$, the following are equivalent:
\begin{enumerate}[(i)]
\item $\mathcal{O}$ is complete,
\item every closed bounded set in $|\mathcal{O}|$ is compact,
\item\label{geodesic completeness} for any $x\in|\mathcal{O}|$ and any $v\in C_x|\mathcal{O}|$ there is a unique geodesic $\gamma:\mathbb{R}\to\mathcal{O}$ with $|\gamma|(0)=x$ and $\gamma'(0)=v$,
\end{enumerate}
Moreover, if one (and hence all) of those properties hold, then any two points on $|\mathcal{O}|$ can be connected by a minimizing geodesic.
\end{theorem}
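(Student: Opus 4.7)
The plan is to follow the classical Hopf--Rinow--Cohn-Vossen argument, adapting each step to orbifolds via local charts. Throughout I will use that $|\mathcal{O}|$ is a locally compact length space (each point has a neighborhood isometric to $\widetilde{U}/H$ for a finite group $H$ acting by isometries on a Euclidean open set $\widetilde{U}$) and that the exponential map $\exp_x$ on $C_x|\mathcal{O}|$ is defined as the descent via $\phi$ of the classical $\exp_{\tilde{x}}$ in any chart around $x$. The equivalence (i) $\Leftrightarrow$ (ii) is the standard metric-geometry fact that a locally compact length space is complete if and only if it satisfies the Heine--Borel property; the implication (ii) $\Rightarrow$ (i) is immediate since Cauchy sequences are bounded and hence precompact.

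For (ii) $\Rightarrow$ (iii), given initial data $(x,[v])$ with $\|v\|=1$, local ODE theory in a chart produces a maximal unit-speed geodesic $\gamma\colon[0,T)\to\mathcal{O}$. Assuming $T<\infty$, the image of $\gamma$ lies in $\overline{B}_T(x)$, which is compact by (ii), so $\gamma(t_n)\to y$ for some $t_n\to T$. Choosing a linear chart at $y$ on a geodesically convex Riemannian ball $B_r(0)\subset T_{\tilde{y}}\widetilde{U}$ yields a uniform geodesic existence time $\delta=r/2$ for unit-speed geodesics starting in $B_{r/2}(0)$; lifting $\gamma|_{[t_n,T)}$ to a geodesic in $\widetilde{U}$ and extending the lift for time $\delta$ yields an extension of $\gamma$ past $T$, contradicting the choice of $T$.

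For (iii) $\Rightarrow$ (i) together with the ``moreover'' assertion, I will run the classical Hopf--Rinow argument directly on $\mathcal{O}$. Given $x,y\in|\mathcal{O}|$ with $d(x,y)=L$ and $\varepsilon<L$ small, the sphere $S_{\varepsilon}(x):=\exp_x(\{v\in C_x|\mathcal{O}| \mid \|v\|=\varepsilon\})$ is compact as the image of a Euclidean sphere modulo $\Gamma_x$; take $x_0\in S_{\varepsilon}(x)$ minimizing $d(\cdot,y)$, write $x_0=\exp_x(\varepsilon v)$ with $\|v\|=1$, and set $\gamma(t):=\exp_x(tv)$, defined on all of $\mathbb{R}$ by (iii). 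I will show that $A:=\{t\in[\varepsilon,L]\mid d(\gamma(t),y)=L-t\}$ is closed, contains $\varepsilon$, and satisfies $\sup A=L$: if $t^{*}:=\sup A<L$, repeating the construction at $\gamma(t^{*})$ yields a point $z_0$ with $d(x,z_0)=t^{*}+\varepsilon'$ realized by the concatenation of $\gamma|_{[0,t^{*}]}$ with a minimizing geodesic to $z_0$; this concatenation is then a smooth geodesic coinciding with $\gamma|_{[0,t^{*}+\varepsilon']}$, contradicting $t^{*}=\sup A$. Completeness then follows: a Cauchy sequence $(p_n)$ with $d(x,p_n)\leq R$ can be written $p_n=\exp_x(v_n)$ with $v_n$ in the compact ball $\overline{B}_R(0)\subset C_x|\mathcal{O}|=T_{\tilde{x}}\widetilde{U}/\Gamma_x$, so a convergent subsequence $v_{n_k}\to v^{*}$ yields $p_{n_k}\to\exp_x(v^{*})$, and a Cauchy sequence with a convergent subsequence converges.

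The main obstacle will be the no-corner step in the $t^{*}$-argument, namely showing that a length-minimizing curve in $\mathcal{O}$ is a smooth orbifold geodesic even where it crosses singular strata. The resolution is to lift the curve to a local chart and invoke the identity $d_{|\mathcal{O}|}(\phi(\tilde{p}),\phi(\tilde{q}))=\min_{h\in H}d_{\widetilde{U}}(\tilde{p},h\tilde{q})$ together with the fact that each restriction to a small subarc is itself length-minimizing; this forces the lift to realize the Riemannian distance to the closest preimage of its endpoint in $\widetilde{U}$ and hence to be a smooth geodesic segment, whose projection is a smooth orbifold geodesic, supplying the uniqueness that makes the concatenation in the $t^{*}$-argument coincide with $\gamma$.
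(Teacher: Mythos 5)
Your proposal is correct, but it takes a genuinely different route from the text. The paper disposes of the theorem in two sentences: the implication from completeness to geodesic completeness is done chart-by-chart as in the classical case (following Kleiner--Lott), while the equivalence of (i) and (ii) and the existence of shortest paths are delegated wholesale to the Hopf--Rinow--Cohn-Vossen theorem for locally compact length spaces, the resulting shortest rectifiable paths being upgraded to smooth geodesics by the remark, stated just before the theorem, that any locally minimizing curve is a geodesic. You instead rerun the entire classical Riemannian argument on the orbifold itself: the bounded-compactness extension argument for (ii) $\Rightarrow$ (iii), and the full $t^{*}$-continuation argument for (iii) $\Rightarrow$ (i) together with the ``moreover'' clause. (Note that your (i) $\Leftrightarrow$ (ii) step still quietly invokes the core of Cohn-Vossen's theorem, so you have not avoided it entirely.) What your route buys is that it isolates and actually justifies the one genuinely orbifold-specific ingredient --- the no-corner lemma, i.e.\ that a minimizing curve crossing the singular locus lifts to a minimizing, hence smooth, geodesic in a chart via the identity $d_{|\mathcal{O}|}(\phi(\tilde{p}),\phi(\tilde{q}))=\min_{h\in H}d_{\widetilde{U}}(\tilde{p},h\tilde{q})$ --- which the paper leaves implicit in its unproved assertion about locally minimizing curves. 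The cost is that you lean on several standard but unproved facts (length-preserving path lifting through the quotient $\widetilde{U}\to\widetilde{U}/H$, the local validity of the displayed distance formula, and the fact that $S_{\varepsilon}(x)$ is the metric sphere separating $x$ from far points); these are all routine for finite isometric quotients, so I see no genuine gap, only details to be filled in at the level of care the paper itself applies elsewhere.
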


The proof that completeness implies geodesic completeness (the property in item \eqref{geodesic completeness}) is similar to the classical proof of this fact for manifolds, as indicated in \cite[Lemma 2.16]{kleiner}. The other equivalences follow from Cohn-Vossen's generalization of the Hopf--Rinow theorem to length spaces \cite{cohn}.

We can therefore define the exponential map as follows. Given $x\in|\mathcal{O}|$ and $v\in C_x|\mathcal{O}|$, let $\gamma_c:[0,1]\to\mathcal{O}$ be the unique geodesic satisfying $|\gamma|_c(0)=x$ and $\gamma'_c(0)=v$. Define $|\exp|(x,v)=(x,|\gamma_c|(1))\in |\mathcal{O}|\times|\mathcal{O}|$, which is continuous and admits smooth lifts (see \cite[Proposition 4.2.8]{chenruan}), hence a smooth orbifold map $\exp:T\mathcal{O}\to\mathcal{O}\times\mathcal{O}$. Its restriction to each $T_x\mathcal{O}$ gives a smooth orbifold map $\exp_x:T_x\mathcal{O}\to\mathcal{O}$ so that $(x,|\exp_x|(v))=|\exp|(x,v)$.

\section{Orbifold versions of classical theorems}\label{section:Orbifold Versions of Classical Theorems}

Many results in the Riemannian geometry of manifolds generalize to orbifolds, as can be seen, for example, in \cite{borzellino3}, \cite{borzellino4}, \cite[Section 4.2]{chenruan}, \cite[Section 2.5]{kleiner} and \cite[Section 2.3]{yeroshkin}. We begin by citing the following version of De Rham's decomposition theorem.

\begin{theorem}[De Rham decomposition theorem for orbifolds {\cite[Lemma 2.19]{kleiner}}]
Let $\mathcal{O}$ be a connected, simply-connected and complete Riemannian orbifold. Suppose that for some $x\in \mathcal{O}_\mathrm{reg}$ there is an orthogonal splitting $T_x\mathcal{O}=V_1\oplus V_2$ which is invariant by holonomy around loops based at $x$. Then $\mathcal{O}$ splits isometrically as $\mathcal{V}_1\times \mathcal{V}_2$, so that $T_{x_1}\mathcal{V}_1= V_1$ and $T_{x_2}\mathcal{V}_2= V_2$, for $x=(x_1,x_2)$.
\end{theorem}

A vector field $X$ on $\mathcal{O}$ is a \textit{Killing vector field} if $\mathcal{L}_X\mathrm{g}=0$, which means that the local flows of $X$ act by isometries. There is the following analogue of the Myers--Steenrod Theorem.

\begin{theorem}[Myers--Steenrod theorem for orbifolds {\cite[Theorem 1]{bagaev}}]\label{theorem: Myers-Steenrod for orbifolds}
Let $\mathcal{O}$ be a connected $n$-dimensional Riemannian orbifold. With the compact-open topology, the isometry group $\mathrm{Iso}(\mathcal{O})$ of $\mathcal{O}$ has a Lie group structure with which it acts smoothly and properly on $\mathcal{O}$ (on the left). Moreover, if $\mathcal{O}$ is complete then the Lie algebra $\mathfrak{iso}(\mathcal{O})$ of $\mathrm{Iso}(\mathcal{O})$ is isomorphic to the opposite algebra of the Lie algebra of Killing vector fields.
\end{theorem}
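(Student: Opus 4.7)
The plan is to reduce the statement to the classical Myers--Steenrod theorem for manifolds by passing to the orthonormal frame bundle. By Proposition \ref{proposition: every orbifold is a quotient}, the complex orthonormal frame bundle $\mathcal{O}^\Yup_{\mathbb{C}}$ carries a Riemannian metric on which $\mathrm{U}(n)$ acts smoothly, properly, almost freely and by isometries, with $\mathcal{O}$ isometric to the quotient orbifold $\mathcal{O}^\Yup_{\mathbb{C}}/\!/\mathrm{U}(n)$. The central observation is that an isometry $f\in\mathrm{Iso}(\mathcal{O})$ lifts, uniquely up to the $\mathrm{U}(n)$-action, to a $\mathrm{U}(n)$-equivariant isometry $\tilde{f}$ of $\mathcal{O}^\Yup_{\mathbb{C}}$: the differential of a local lift sends orthonormal frames to orthonormal frames, and the lift is globally well-defined on the frame bundle because $\mathcal{O}^\Yup_{\mathbb{C}}$ is already a manifold. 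Conversely, any $\mathrm{U}(n)$-equivariant isometry of $\mathcal{O}^\Yup_{\mathbb{C}}$ descends to an isometry of $\mathcal{O}$. This identifies $\mathrm{Iso}(\mathcal{O})$ with $N(\mathrm{U}(n))/\mathrm{U}(n)$, where $N(\mathrm{U}(n))$ is the normalizer of $\mathrm{U}(n)$ inside $\mathrm{Iso}(\mathcal{O}^\Yup_{\mathbb{C}})$.

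By the classical Myers--Steenrod theorem, $\mathrm{Iso}(\mathcal{O}^\Yup_{\mathbb{C}})$ is a finite-dimensional Lie group acting smoothly and properly on $\mathcal{O}^\Yup_{\mathbb{C}}$ (in the compact-open topology). The normalizer $N(\mathrm{U}(n))$ is cut out by a closed condition in the compact-open topology, hence is a closed Lie subgroup, and $\mathrm{U}(n)$ sits inside it as a closed normal subgroup. The quotient $N(\mathrm{U}(n))/\mathrm{U}(n)$ therefore inherits a natural Lie group structure, and transporting it along the identification above endows $\mathrm{Iso}(\mathcal{O})$ with a Lie group structure. Smoothness and properness of the $\mathrm{Iso}(\mathcal{O})$-action on $\mathcal{O}$ follow from the corresponding properties upstairs: the action map $N(\mathrm{U}(n))\times\mathcal{O}^\Yup_{\mathbb{C}}\to\mathcal{O}^\Yup_{\mathbb{C}}$ is smooth and proper, and both properties descend to the quotient by compactness of $\mathrm{U}(n)$.

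For the Lie algebra identification when $\mathcal{O}$ is complete, one observes that completeness lifts to $\mathcal{O}^\Yup_{\mathbb{C}}$, so by classical theory the Lie algebra of $\mathrm{Iso}(\mathcal{O}^\Yup_{\mathbb{C}})$ is anti-isomorphic to the algebra of Killing vector fields on $\mathcal{O}^\Yup_{\mathbb{C}}$. Under the identification $\mathrm{Iso}(\mathcal{O})\cong N(\mathrm{U}(n))/\mathrm{U}(n)$, its Lie algebra becomes the quotient of the Lie algebra of $\mathrm{U}(n)$-invariant Killing fields lying in the normalizer by the fundamental fields of the $\mathrm{U}(n)$-action. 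A horizontal lift argument shows that these are in bijection with Killing vector fields on $\mathcal{O}$: locally, a Killing field on $\mathcal{O}$ is an $H$-invariant Killing field in a chart $\widetilde{U}$, which extends canonically to a $\mathrm{U}(n)$-invariant Killing field on the restricted frame bundle, and completeness allows us to integrate it to a one-parameter family of isometries that descends to $\mathcal{O}$. The opposite-algebra convention appears because fundamental vector fields of a left action satisfy $[X^{\#},Y^{\#}]=-[X,Y]^{\#}$, so the natural map $\mathrm{Lie}(\mathrm{Iso}(\mathcal{O}))\to\mathfrak{iso}(\mathcal{O})$ is an anti-isomorphism.

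The main obstacle I expect is the topological identification: one must show that the compact-open topology on $\mathrm{Iso}(\mathcal{O})$ coming from continuous maps of $|\mathcal{O}|$ agrees with the quotient topology from $N(\mathrm{U}(n))$. The direction that convergence upstairs implies convergence downstairs is immediate from continuity of the projection, but the converse requires a lifting argument: a convergent sequence of orbifold isometries must be shown to admit (after passing to subsequences and reparametrizing by $\mathrm{U}(n)$) a convergent sequence of lifts to the frame bundle. This uses an Arzelà--Ascoli type equicontinuity estimate, together with properness and compactness of the $\mathrm{U}(n)$-action to absorb the ambiguity in the choice of lift. The rest of the argument is essentially a careful translation between manifold and orbifold conventions.
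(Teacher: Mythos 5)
The paper does not actually prove this theorem: it is quoted verbatim from Bagaev--Zhukova (the cited \cite[Theorem 1]{bagaev}), so there is no in-text argument to compare yours against. That said, your strategy---lift everything to the orthonormal frame bundle $\mathcal{O}^\Yup_{\mathbb{C}}$, which is a genuine Riemannian manifold with an almost free isometric $\mathrm{U}(n)$-action presenting $\mathcal{O}$ as a quotient (Proposition \ref{proposition: every orbifold is a quotient}), and then invoke the classical Myers--Steenrod theorem upstairs---is exactly the standard route and is essentially how the cited reference proceeds. So the approach is the right one.

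Two points in your outline are genuine gaps rather than routine bookkeeping. First, the identification $\mathrm{Iso}(\mathcal{O})\cong N(\mathrm{U}(n))/\mathrm{U}(n)$ needs both inclusions and an identification of the kernel. The canonical lift of an orbifold isometry via differentials of local lifts actually lands in the \emph{centralizer} of $\mathrm{U}(n)$ (the differential commutes with the structure-group action), so you must either argue that every element of the normalizer nevertheless descends to a \emph{smooth orbifold} isometry of $\mathcal{O}$---not merely a distance-preserving bijection of $(|\mathcal{O}|,d)$, which is a nontrivial regularity statement---or work with the closed subgroup generated by the lifts and prove it is closed in $\mathrm{Iso}(\mathcal{O}^\Yup_{\mathbb{C}})$. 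You also need that the kernel of the descent homomorphism is exactly $\mathrm{U}(n)$ and not some larger group of fiber-preserving isometries; this requires an argument (e.g.\ that an equivariant isometry covering the identity is right translation by a single element on each connected component). Second, the compatibility of the compact-open topology on $\mathrm{Iso}(\mathcal{O})$ with the quotient topology from $N(\mathrm{U}(n))$, which you correctly single out as the main obstacle, is only announced, not carried out; the Arzel\`a--Ascoli plus properness argument you sketch is the right idea, but as written the proposal defers precisely the step that makes the Lie group structure on $\mathrm{Iso}(\mathcal{O})$ the one induced by the compact-open topology, which is part of the statement. With these two items supplied, the proof would be complete and would match the literature.
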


We define the curvature tensor $R$ of $\mathcal{O}$ as the curvature tensor of the Levi-Civita connection $\nabla$ on $U_{\mathcal{O}}$. Derived curvature notions, such as sectional and Ricci curvatures (denoted $\sec_{\mathcal{O}}$ and $\ric_{\mathcal{O}}$, respect.), are defined accordingly. Let us now see some orbifold versions of classical Riemannian Geometry involving curvature hypotheses, beginning with the Bonnet--Myers Theorem for orbifolds.

\begin{theorem}[Bonnet--Myers theorem for orbifolds {\cite[Corollary 21]{borzellino3}}]\label{theorem: bonnet-myers for orbifolds}
Let $\mathcal{O}$ be a connected, complete $n$-dimensional Riemannian orbifold satisfying $\ric_{\mathcal{O}}\geq (n-1)/r^2$ for some $r>0$. Then $\diam(|\mathcal{O}|)\leq \pi r$. In particular, $|\mathcal{O}|$ is compact.
\end{theorem}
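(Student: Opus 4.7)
The plan is to adapt the classical Bonnet--Myers proof via the second variation of length, the key technical point being that geodesics, parallel transport, and variation fields can all be described locally via orbifold charts. First, suppose for contradiction that there exist $x,y\in|\mathcal{O}|$ with $L:=d(x,y)>\pi r$. By the Hopf--Rinow theorem for orbifolds, we can join $x$ and $y$ by a minimizing unit-speed geodesic $\gamma:[0,L]\to\mathcal{O}$. Since $\gamma([0,L])$ is compact, I would cover it by finitely many linear orbifold charts $(\widetilde{U}_i,H_i,\phi_i)$, $i=1,\dots,N$, together with a subdivision $0=t_0<t_1<\dots<t_N=L$ such that each piece $\gamma|_{[t_{i-1},t_i]}$ lifts to a smooth geodesic $\widetilde{\gamma}_i$ in $\widetilde{U}_i$ (such lifts exist and are unique once an initial point is chosen, since in a chart $\gamma$ is a solution of the usual geodesic equation).

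Second, I would construct a parallel orthonormal frame $e_1(t),\dots,e_{n-1}(t)$ along $\gamma$ with each $e_j$ orthogonal to $\gamma'(t)$. Within each $\widetilde{U}_i$, parallel transport along $\widetilde{\gamma}_i$ via the Levi-Civita connection of the local $H_i$-invariant metric is standard; the chart transitions are given by chart embeddings, which by construction are local isometries, so after fixing a lift at $\gamma(t_1)$ one patches the pieces into a globally defined parallel frame along $\gamma$. Set $V_j(t):=\sin(\pi t/L)\,e_j(t)$. These fields vanish at $t=0,L$ and induce proper variations of $\gamma$ (in each chart by varying $\widetilde{\gamma}_i$ by the lift of $V_j$, which is compatible at transitions by the same patching argument). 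Computing the second variation of energy in each chart by the classical formula and summing local contributions yields the global index form, and summing over $j$ gives
\begin{align*}
\sum_{j=1}^{n-1} I(V_j,V_j) &= \int_0^L \left[(n-1)\!\left(\tfrac{\pi}{L}\right)^{\!2}\!\cos^2\!\left(\tfrac{\pi t}{L}\right) - \ric_{\mathcal{O}}(\gamma',\gamma')\sin^2\!\left(\tfrac{\pi t}{L}\right)\right]dt \\
&\leq (n-1)\int_0^L \left[\left(\tfrac{\pi}{L}\right)^{\!2}\!\cos^2\!\left(\tfrac{\pi t}{L}\right)-\tfrac{1}{r^2}\sin^2\!\left(\tfrac{\pi t}{L}\right)\right]dt \\
&= \tfrac{(n-1)L}{2}\left(\tfrac{\pi^2}{L^2}-\tfrac{1}{r^2}\right)<0,
\end{align*}
since $L>\pi r$. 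Thus at least one $I(V_j,V_j)<0$, contradicting the minimality of $\gamma$, so $\diam(|\mathcal{O}|)\leq\pi r$. For the compactness conclusion, since $(|\mathcal{O}|,d)$ is complete, the Hopf--Rinow theorem for orbifolds tells us that every closed bounded subset is compact; the diameter bound makes $|\mathcal{O}|$ itself bounded, hence compact.

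The main obstacle will be the patching of the local parallel frames and the corresponding variation fields across the chart transitions $t_i$, and the verification that the global index-form integral really splits as the sum of the local chart integrals. Near a singular parameter $t_i$, one must check that a consistent choice of lift of $\gamma$ through $\widetilde{U}_i\cap\widetilde{U}_{i+1}$ exists (which follows from the compatibility of overlapping charts), and that the $H_i$-invariance of the local metric does not obstruct the assembly of the $V_j$ into bona fide orbifold variations of $\gamma$ with fixed endpoints. Once this bookkeeping is carried out, the core inequality is identical to the manifold case, and the argument is complete.
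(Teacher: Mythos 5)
The paper does not prove this theorem; it is stated as a citation to Borzellino's \emph{Orbifolds of maximal diameter}, so there is no in-text argument to compare against. Your proposal is the classical second-variation proof transplanted to orbifold charts, and in outline it is sound: Hopf--Rinow for orbifolds supplies the minimizing geodesic, the index-form computation is purely local, and the final integral estimate is correct. Two points deserve more than the ``bookkeeping'' label you give them. First, the contradiction must happen downstairs: a negative index form produces shorter curves \emph{upstairs} in the charts, and you need to observe explicitly that the varied curves $\exp_{\widetilde{\gamma}(t)}(sV_j(t))$ in consecutive charts are intertwined by the transition isometries (because $\exp$ commutes with isometries), hence project to a single well-defined curve in $|\mathcal{O}|$ joining $x$ to $y$ whose length equals the upstairs length --- that is what actually contradicts $d(x,y)=L$. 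Second, the delicate case is a minimizing geodesic whose interior meets (or lies inside) the singular locus, where lifts and frames are only defined along the $\mathscr{H}$-path rather than on a neighborhood; your construction still works there, but it is worth noting that one can sidestep the issue entirely. Since $\mathcal{O}_{\mathrm{reg}}$ is dense, $\diam(|\mathcal{O}|)=\sup\{d(x,y): x,y\in\mathcal{O}_{\mathrm{reg}}\}$, and Borzellino shows that a minimizing segment between regular points stays in the regular part, which is an honest (incomplete) Riemannian manifold; the second variation argument then applies verbatim with no chart-patching at all. That reduction-to-$\mathcal{O}_{\mathrm{reg}}$ route is cleaner and closer in spirit to the cited source, whereas your chart-by-chart version has the virtue of not requiring the structure theory of minimal geodesics near the singular set.
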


Since this result also applies to the universal covering $\widehat{\mathcal{O}}$ endowed with the pullback metric, it follows that $\piorb(\mathcal{O})$ (and hence $\pi_1(|\mathcal{O}|)$) must be finite. This also implies $b_1(\mathcal{O})=0$, since $H_i(|\mathcal{O}|,\mathbb{Z})=\abel(\pi_1(|\mathcal{O}|))$ is torsion. There is also an orbifold version of the Synge--Weinstein Theorem due to D.~Yeroshkin.

\begin{theorem}[Synge--Weinstein theorem for orbifolds {\cite[Theorem 2.3.5]{yeroshkin}}]\label{theorem: synge-weinstein for orbifolds}
Let $\mathcal{O}$ be a connected, compact, oriented, $n$-dimensional orbifold with $\sec_{\mathcal{O}}>0$ and let $f\in\mathrm{Iso}(\mathcal{O})$. Suppose that $f$ preserves orientation if $n$ is even and reverses orientation if $n$ is odd. Then $f$ has a fixed point.
\end{theorem}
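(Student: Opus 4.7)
The plan is to adapt the classical Synge--Weinstein argument to the orbifold setting. Assuming for contradiction that $|f|$ has no fixed point, I would first observe that the continuous displacement function $D: |\mathcal{O}| \to \mathbb{R}$, $D(x) := d(x, |f|(x))$, attains a positive minimum $d > 0$ at some $p \in |\mathcal{O}|$, by compactness. Invoking the orbifold Hopf--Rinow theorem, pick a minimizing geodesic $\gamma: [0,d] \to \mathcal{O}$ from $p$ to $|f|(p)$, and then show that the concatenation $\gamma * (f \circ \gamma)$ is smooth at $|f|(p)$: letting $m := \gamma(d/2)$, the path $\gamma|_{[d/2, d]} * (f\gamma)|_{[0, d/2]}$ joins $m$ to $|f|(m) = (f\gamma)(d/2)$ with length exactly $d$, so by minimality of $D$ this path is itself a minimizing geodesic, hence smooth at $|f|(p)$. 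Consequently $df_p(\gamma'(0)) = \gamma'(d)$, which equals the parallel translate of $\gamma'(0)$ along $\gamma$.

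Next, I would introduce the orthogonal endomorphism $A := P_\gamma^{-1} \circ df_p : T_p\mathcal{O} \to T_p\mathcal{O}$, where $P_\gamma$ denotes parallel transport along $\gamma$. By the previous step $A$ fixes $\gamma'(0)$, so it restricts to an orthogonal self-map of the $(n-1)$-dimensional subspace $(\gamma'(0))^\perp$. Since $\mathcal{O}$ is oriented and parallel transport preserves orientation, $\det A$ agrees in sign with $\det df_p$. Under the hypotheses---$n$ even and $f$ orientation-preserving, or $n$ odd and $f$ orientation-reversing---the restriction $A|_{(\gamma'(0))^\perp}$ has determinant $+1$ on an odd-dimensional space, or determinant $-1$ on an even-dimensional space. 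Elementary linear algebra on orthogonal matrices then produces a unit vector $v_0 \in (\gamma'(0))^\perp$ with $A v_0 = v_0$. Parallel-transporting $v_0$ along $\gamma$ yields a parallel vector field $V$ with $V(d) = df_p(V(0))$, and the variation $\gamma_s(t) := \exp_{\gamma(t)}(s V(t))$---defined via the smooth orbifold exponential map of Section~\ref{section: length structure}---runs from $q_s := \exp_p(s v_0)$ to $f(q_s) = \exp_{|f|(p)}(s\, df_p v_0)$. The orbifold second variation of arc length then gives
\[
\frac{d^2}{ds^2}\bigg|_{s=0} L(\gamma_s) = -\int_0^d \langle R(V,\gamma')\gamma', V \rangle\, dt < 0
\]
by the positive sectional curvature, so $L(\gamma_s) < d$ for small $s \ne 0$, contradicting $D(q_s) \geq d$.

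The hard part will be justifying each step when $p$, $|f|(p)$, or interior points of $\gamma$ lie in the singular locus. The resolution is that the tangent space $T_p\mathcal{O}$ at a singular point is still a bona fide vector space (the cone $C_p|\mathcal{O}|$ being its quotient by $\Gamma_p$); moreover, $df_p$ is an $\overline{f}_p$-equivariant linear isometry, and parallel transport along lifts of $\gamma$ in overlapping charts patches together equivariantly, so $A$ and $V$ are well defined. Because $\mathcal{O}$ is oriented, each isotropy $\Gamma_x$ acts by orientation-preserving maps on $T_x\mathcal{O}$, giving $\det df_p$ an unambiguous sign. Finally, the midpoint no-corner argument and the second variation are local in nature and lift coherently in any chart, so the classical arguments transfer verbatim once the ingredients are expressed through the lifts.
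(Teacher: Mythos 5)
The paper does not actually prove this theorem; it is stated as a citation to Yeroshkin's thesis, so there is no in-text proof to compare against. Your proposal is the classical Weinstein argument (minimum of the displacement function, the midpoint no-corner trick, the orthogonal map $A=P_\gamma^{-1}\circ \dif f_p$, a fixed unit normal vector from the parity of $\det A$, and the second variation along the parallel field), which is exactly the route the cited source takes, and your handling of the singular locus---orientedness forcing each $\Gamma_x$ to act orientation-preservingly so that $\mathrm{sgn}\det \dif f_p$ is unambiguous, and all local steps lifting to charts---addresses the genuinely orbifold-specific issues. The only place I would ask for slightly more care is the assertion that $A$ is ``well defined'': parallel transport along $\gamma$ and the lift $\tilde f_p$ each depend on choices of local lifts, differing by elements of the (orientation-preserving) isotropy groups; the argument really needs only that \emph{one coherent choice} exists for which $A$ fixes $\gamma'(0)$ and has the prescribed determinant sign, which your no-corner step does supply, so this is a presentational point rather than a gap.
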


Moreover, the classical Synge's theorem has a generalization to orbifolds.

\begin{theorem}[Synge's theorem for orbifolds {\cite[Corollary 2.3.6]{yeroshkin}}]\label{theorem: synge for orbifolds}
Let $\mathcal{O}$ be a compact orbifold with $\sec_{\mathcal{O}}>0$. Then
\begin{enumerate}[(i)]
\item if $\dim\mathcal{O}$ is even and $\mathcal{O}$ is orientable, then $|\mathcal{O}|$ is simply-connected, and
\item if $\dim\mathcal{O}$ is odd and $\mathcal{O}$ is locally orientable, then $\mathcal{O}$ is orientable.
\end{enumerate}
\end{theorem}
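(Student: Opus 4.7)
The plan is to deduce both statements from the Synge--Weinstein theorem for orbifolds (Theorem~\ref{theorem: synge-weinstein for orbifolds}) by producing, in each case, a suitable covering $p:\widehat{\mathcal{O}}\to\mathcal{O}$ whose underlying map $|\widehat{\mathcal{O}}|\to|\mathcal{O}|$ is a genuine \emph{topological} covering and whose nontrivial deck transformations exhibit the orientation behaviour that Synge--Weinstein can contradict. The key design choice is to insist on topological covers rather than passing to the orbifold universal cover, since deck transformations of a general orbifold cover may fix singular points of $|\widehat{\mathcal{O}}|$ and thereby sidestep the Synge--Weinstein obstruction.

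For part (i), suppose for contradiction that $|\mathcal{O}|$ is not simply connected, and pull the orbifold charts back along the topological universal cover of $|\mathcal{O}|$: each chart $(\widetilde{U},H,\phi)$ of $\mathcal{O}$ lifts to disjoint copies $(\widetilde{U},H,\phi_i)$, one per sheet, yielding an orbifold covering $p:\widehat{\mathcal{O}}\to\mathcal{O}$. The pullback metric on $\widehat{\mathcal{O}}$ is complete and inherits the positive Ricci lower bound coming from the positive sectional curvature on the (compact) base, so Bonnet--Myers for orbifolds (Theorem~\ref{theorem: bonnet-myers for orbifolds}) forces $|\widehat{\mathcal{O}}|$ to be compact; in particular $\pi_1(|\mathcal{O}|)$ is finite and the deck group is nontrivial. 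Any nontrivial $\sigma\in\mathrm{Aut}(p)$ is an isometry of $\widehat{\mathcal{O}}$, fixes no point of $|\widehat{\mathcal{O}}|$ (because $p$ is a topological covering), and preserves the orientation lifted from $\mathcal{O}$. Since $\widehat{\mathcal{O}}$ is compact, oriented, even-dimensional and positively curved, Theorem~\ref{theorem: synge-weinstein for orbifolds} produces a fixed point of $\sigma$, the desired contradiction.

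For part (ii), suppose $\mathcal{O}$ is not orientable. Local orientability allows one to build the standard orientation double cover $q:\widehat{\mathcal{O}}\to\mathcal{O}$, whose two sheets over each $x$ parametrize the two equivalence classes of local orientations at $x$; connectivity of $\widehat{\mathcal{O}}$ follows from the global non-orientability of $\mathcal{O}$, so $q$ is a nontrivial topological double cover. Its nontrivial deck transformation $\sigma$ swaps the sheets (hence acts freely on $|\widehat{\mathcal{O}}|$) and reverses the tautological orientation of $\widehat{\mathcal{O}}$. As $\widehat{\mathcal{O}}$ is compact (double cover of a compact space), oriented, odd-dimensional and positively curved, Theorem~\ref{theorem: synge-weinstein for orbifolds} once again yields a fixed point of $\sigma$---a contradiction.

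The main technical obstacle is the careful construction of the orientation double cover in part (ii): one must patch the local choices of orientation across chart overlaps into a well-defined orbifold structure and verify that the canonical sheet-swapping involution is a smooth orbifold isometry reversing orientation, which uses local orientability exactly to ensure that each chart group acts compatibly through orientation-preserving diffeomorphisms of the lifted chart. The analogous step in part (i)---lifting orientation and the metric along the topological universal cover---is comparatively routine given the chart-by-chart construction of $\widehat{\mathcal{O}}$, and the passage from ``$\pi_1(|\mathcal{O}|)$ nontrivial'' to ``nontrivial deck group acting on a compact $|\widehat{\mathcal{O}}|$'' is handled entirely by Bonnet--Myers.
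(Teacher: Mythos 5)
The paper states this result only with a citation to Yeroshkin and gives no proof of its own, but your argument is correct and is precisely the intended derivation: there Synge's theorem is literally a corollary of the Synge--Weinstein theorem, obtained by applying it to the free, orientation-preserving (resp.\ orientation-reversing) nontrivial deck transformation of the topological universal cover of $|\mathcal{O}|$ (resp.\ of the orientation double cover), with Bonnet--Myers supplying compactness of the cover in case (i). Your two orbifold-specific precautions --- working with topological covers of $|\mathcal{O}|$ rather than orbifold covers so that deck transformations act freely, and using local orientability so that the chart groups act trivially on the orientation factor and the double cover is a genuine topological double cover --- are exactly the points where a naive transcription of the manifold proof could go wrong, so nothing essential is missing.
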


The Cheeger--Gromoll splitting theorem was generalized to orbifolds by J.~Borzellino and S.~Zhu. Recall that a \textit{line} on a Riemannian orbifold is a unit speed geodesic $\gamma:\mathbb{R}\to\mathcal{O}$ such that $d(\gamma(s),\gamma(t))=|s-t|$ for any $s,t\in\mathbb{R}$.

\begin{theorem}[Splitting theorem for orbifolds {\cite[Theorem 1]{borzellino8}}]
Let $\mathcal{O}$ be a complete Riemannian orbifold with $\ric_{\mathcal{O}}\geq 0$. If $\mathcal{O}$ has a line, then it is isometric to $\mathcal{P}\times\mathbb{R}$, where $\mathcal{P}$ is a complete Riemannian orbifold with $\ric_{\mathcal{P}}\geq 0$.
\end{theorem}

There is also a generalization on Cheng's sphere theorem, due to J.~Borzellino:

\begin{theorem}[Cheng's theorem for orbifolds {\cite[Theorems 1 and 2]{borzellino3}}]
Let $\mathcal{O}$ be a connected, complete, $n$-dimensional Riemannian orbifold satisfying $\ric_{\mathcal{O}}\geq n-1$ and $\mathrm{diam}(|\mathcal{O}|)=\pi$. Then $\mathcal{O}$ is a quotient of $\mathbb{S}^n$ by a finite group $\Gamma\in\mathrm{O}(n+1)$. In particular, $\mathcal{O}$ is good.
\end{theorem}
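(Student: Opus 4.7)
The plan is to follow Cheng's classical argument, adapted to the orbifold setting via a Bishop--Gromov volume comparison, and to close with an orbifold Killing--Hopf theorem. Applying the Bonnet--Myers theorem for orbifolds already stated in this section, $|\mathcal{O}|$ is compact, so there exist $p,q\in|\mathcal{O}|$ with $d(p,q)=\pi$. The triangle inequality then shows that for every $r\in(0,\pi)$ the open metric balls $B_r(p)$ and $B_{\pi-r}(q)$ are disjoint, yielding the ``volume budget'' inequality
$$\vol(B_r(p))+\vol(B_{\pi-r}(q))\leq\vol(\mathcal{O}).$$

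Next, I would establish a Bishop--Gromov volume comparison for orbifolds: for every $x\in|\mathcal{O}|$, the function $r\mapsto\vol(B_r(x))/\vol(B_r^{\mathbb{S}^n})$ is non-increasing on $(0,\pi]$, with limit $1/|\Gamma_x|$ as $r\to 0$. This follows from the classical proof applied upstairs in a linear chart $(\widetilde{U},\Gamma_x,\phi)$: the orbifold exponential lifts to $\exp_{\tilde x}$ on the manifold $\widetilde U$, on which $\ric\geq n-1$, the Riccati/Jacobian comparison works upstairs verbatim, and orbifold volumes are recovered as $\vol_{\widetilde U}/|H|$. Applying the comparison at $p$ and at $q$, combining with the disjointness above, and using the spherical identity $\vol(B_r^{\mathbb{S}^n})+\vol(B_{\pi-r}^{\mathbb{S}^n})=\vol(\mathbb{S}^n)$ then forces equality in the Bishop--Gromov ratio on the whole ball $B_\pi(p)=|\mathcal{O}|$. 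The rigidity statement of Bishop--Gromov, again proved upstairs in charts, asserts that $\mathcal{O}$ has constant sectional curvature $1$.

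To turn a constant-curvature-$1$ metric into the desired quotient structure, I would invoke a Killing--Hopf theorem for Riemannian orbifolds: any complete, connected Riemannian $n$-orbifold of constant sectional curvature $1$ is isometric to $\mathbb{S}^n/\!/\Gamma$ for a finite subgroup $\Gamma<\mathrm{O}(n+1)$. This is proved via a developing map $\mathrm{dev}\colon\widehat{\mathcal{O}}\to\mathbb{S}^n$ on the orbifold universal cover: by monodromy and orbifold simple-connectedness of $\widehat{\mathcal{O}}$, the local isometries on charts glue into a global local isometry; the ``local isometries are coverings'' theorem recalled above then makes $\mathrm{dev}$ a covering map; and simple connectedness of $\mathbb{S}^n$ (for $n\geq 2$, with the case $n=1$ handled directly as $\mathbb{S}^1/\!/\mathbb{Z}_2$) forces $\mathrm{dev}$ to be an isometry. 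Hence $\widehat{\mathcal{O}}\cong\mathbb{S}^n$ is a manifold, $\mathcal{O}\cong\mathbb{S}^n/\!/\piorb(\mathcal{O})$, and finiteness of $\piorb(\mathcal{O})$ follows from compactness of $\mathbb{S}^n$.

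The main obstacle will be the rigidity half of the Bishop--Gromov step: monotonicity descends easily from charts, but the equality case has to be transported across chart embeddings to produce a \emph{global} round metric, and one must also account for the fact that the diameter-realizing $p,q$ can lie in the singular locus, so that the initial value of the Bishop--Gromov ratio is $1/|\Gamma_p|$ rather than $1$ and the equality-forcing arithmetic must be adjusted accordingly; likewise the orbifold Killing--Hopf step, while standard in spirit, requires a careful implementation of the developing map for orbifold atlases rather than manifold ones.
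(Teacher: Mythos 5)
The paper does not prove this theorem; it is quoted from Borzellino's \emph{Orbifolds of maximal diameter}, and your outline (Bonnet--Myers compactness, two points at distance $\pi$, disjoint balls, relative volume comparison, equality, rigidity) is indeed the Cheng-type argument that the cited source carries out. So the overall strategy is sound. The genuine gap is in how you justify the Bishop--Gromov step. A linear chart $(\widetilde U,\Gamma_x,\phi)$ only uniformizes a small neighborhood of $x$, whereas the comparison must hold for all $r\in(0,\pi]$; for such $r$ the ball $B_r(x)$ is not contained in any chart, the geodesic spray from $x$ repeatedly leaves and re-enters charts and may graze or run along singular strata, and there is no single manifold ``upstairs'' on which to run the Riccati/Jacobian comparison ``verbatim.'' The real content of the orbifold volume comparison is precisely the global control of minimal geodesics and the cut locus relative to the singular set (e.g.\ the fact that minimal segments between regular points stay in the dense open manifold $\mathcal{O}_{\mathrm{reg}}$, so that the comparison can be run there and the singular set contributes measure zero); this is established in Borzellino's companion work on lower Ricci bounds and cannot be dismissed as descending ``easily from charts.'' The same remark applies a fortiori to the rigidity half, which you correctly flag as the hard part.

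Two smaller points. First, the worry about the limit $1/|\Gamma_p|$ is misplaced: in Cheng's arithmetic one compares the ratio at radius $r$ with its value at radius $\pi$, namely $\vol(\mathcal{O})/\vol(\mathbb{S}^n)$, so the inequality chain closes exactly as in the manifold case with no adjustment; the $r\to 0$ limit is used only afterwards, to read off $|\Gamma_p|=|\Gamma_q|=\vol(\mathbb{S}^n)/\vol(\mathcal{O})$. Second, the detour through ``constant sectional curvature $1$ plus an orbifold Killing--Hopf theorem'' is heavier than necessary: if the equality case of the comparison at $p$ is proved properly, it already exhibits $|\mathcal{O}|$ in polar coordinates about $p$ as $\mathbb{S}^n/\Gamma_p$ with $\Gamma_p<\mathrm{O}(n)\subset\mathrm{O}(n+1)$ fixing the two poles, which is the precise form of the conclusion; the developing-map argument is a valid alternative ending but imports the whole Thurston $(G,X)$-orbifold machinery to prove something the rigidity step hands you directly.
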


Püttmann and Searle proved in \cite[Theorem 2]{puttmann} that the famous Hopf conjecture --- that the Euler characteristic of a positively curved manifold is positive --- holds for manifolds with a large symmetry rank. By the \textit{symmetry rank}, $\symrank(\mathcal{O})$, of a Riemannian orbifold $\mathcal{O}$ we mean the maximal dimension of an Abelian subalgebra of $\mathfrak{iso}(\mathcal{O})$.

\begin{theorem}[Püttmann--Searle theorem for orbifolds {\cite[Proposition 8.8]{caramello}}]\label{theorem: putmann searle for orbifolds}
Let $\mathcal{O}$ be a connected, orientable, compact Riemannian orbifold. If $n=\dim(\mathcal{O})$ is even, $\sec_{\mathcal{O}}>0$ and $\symrank(\mathcal{O})\geq n/4-1$, then $\chi(|\mathcal{O}|)>0$.
\end{theorem}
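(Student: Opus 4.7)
The plan is to adapt the classical Püttmann--Searle argument by combining Riemannian input with the equivariant cohomology machinery from the previous chapter. By the Myers--Steenrod theorem for orbifolds (Theorem \ref{theorem: Myers-Steenrod for orbifolds}), $\mathrm{Iso}(\mathcal{O})$ is a Lie group acting smoothly and properly on the compact orbifold $\mathcal{O}$, hence is itself compact. Fix a maximal torus $T\subset\mathrm{Iso}(\mathcal{O})$; by hypothesis $\dim T = \symrank(\mathcal{O}) \geq n/4 - 1$. Applying the Borel--Hsiang localization theorem (Theorem \ref{thrm: Borel-Hsiang local for orbifolds}) and then invoking Theorem \ref{theorem: cohomology of orbifolds over a field} to pass from equivariant to ordinary cohomology, a standard Conner-type argument (the equivariant Poincaré polynomials of $\mathcal{O}$ and $\mathcal{O}^T$ agree modulo $\sym(\mathfrak{t}^*)$-torsion) yields the equality $\chi(|\mathcal{O}|) = \chi(|\mathcal{O}^T|)$. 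It therefore suffices to prove that $|\mathcal{O}^T|$ is non-empty and that the sum of the Euler characteristics of its connected components is positive.

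For non-emptiness, the plan is to induct on $\dim T$. Any non-trivial $g\in T$ lies in the identity component of $\mathrm{Iso}(\mathcal{O})$ and so preserves orientation; since $n$ is even and $\sec_{\mathcal{O}} > 0$, Synge--Weinstein for orbifolds (Theorem \ref{theorem: synge-weinstein for orbifolds}) provides a fixed point of $g$. Choosing a circle $S^1\subset T$ containing such a $g$, Proposition \ref{prop: fixed points suborbifolds} asserts that each component of $\mathcal{O}^{S^1}$ is a compact strong suborbifold; moreover, it is totally geodesic, of even codimension (the $S^1$-action rotates the normal cones), and inherits positive sectional curvature. The residual torus $T/S^1$ acts by isometries on each such component, and iterating yields a common fixed point of $T$.

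Positivity of $\chi(|\mathcal{O}^T|)$ would then be established by induction on $n$, treating each connected component $F$ of $\mathcal{O}^T$ separately. The crucial geometric input from the classical case is a Frankel-type intersection theorem: two totally geodesic submanifolds of a positively curved manifold whose dimensions sum to at least the ambient dimension must meet. Translated to the orbifold setting, this yields, for each such $F$, codimension bounds in terms of $k$ and the normal isotropy representations of $T$, sufficient to verify the inductive hypothesis $\symrank(F) \geq (\dim F)/4 - 1$ from $k \geq n/4 - 1$. The base cases are low-dimensional: $n=2$ follows from the orbifold Gauss--Bonnet formula (the exercise preceding this section), while $n=4$ requires a direct analysis exploiting Synge's theorem for orbifolds (Theorem \ref{theorem: synge for orbifolds}). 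The principal obstacle is the orbifold Frankel theorem for totally geodesic strong suborbifolds and the careful control of the interaction between the local groups $\Gamma_x$ at isolated fixed points and the normal isotropy data of $T$; securing these two ingredients in the singular setting is where the bulk of the technical work lies.
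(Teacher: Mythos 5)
The book states this theorem without proof, citing Proposition 8.8 of Caramello--T\"oben, so there is no internal argument to compare yours against; I can only assess the proposal on its own terms. Its skeleton is the right one --- it is the classical P\"uttmann--Searle strategy transplanted to orbifolds: reduce $\chi(|\mathcal{O}|)$ to $\chi(|\mathcal{O}^T|)$ by localization, get $\mathcal{O}^T\neq\emptyset$ from Synge--Weinstein, and prove positivity of $\chi(|\mathcal{O}^T|)$ by induction using Frankel-type intersection arguments. Two remarks on the parts you do carry out. First, a fixed point of an arbitrary non-trivial $g\in T$ need not be fixed by a circle containing $g$; you should instead apply Theorem \ref{theorem: synge-weinstein for orbifolds} to a \emph{topological generator} $g$ of $T$ (which preserves orientation, being in the identity component of $\mathrm{Iso}(\mathcal{O})$), so that $\mathcal{O}^g=\mathcal{O}^T$ and the non-emptiness follows in one step. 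Second, the appeal to Theorem \ref{theorem: cohomology of orbifolds over a field} is superfluous: $H_T(\mathcal{O})$ as defined in the text is already the singular cohomology of the Borel construction on $|\mathcal{O}|$, so the standard localization argument directly yields $\chi(|\mathcal{O}|)=\chi(|\mathcal{O}^T|)$.

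The genuine gap is that the mathematical core of the theorem --- the reason the threshold is $n/4-1$ rather than something larger --- is precisely the step you defer. Establishing $\chi(|\mathcal{O}^T|)>0$ requires (a) an orbifold Frankel theorem for totally geodesic strong suborbifolds, which is not in this book and which you do not prove (it does hold, e.g.\ by running the second-variation argument on horizontal geodesics in $\mathcal{O}^\Yup_+$ or in local charts, but it must be written down), and (b) the weight-counting argument that controls, for each fixed-point component $F$ of a subcircle, both its codimension and the rank of the torus still acting effectively on it, so that the inductive hypothesis $\symrank(F)\geq \dim F/4-1$ can be verified; this is where P\"uttmann and Searle's actual work lies, and in the orbifold setting it is further complicated by the local groups $\Gamma_x$, as you yourself note. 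Naming these obstacles is not the same as overcoming them, so as it stands this is a credible plan rather than a proof. (Your base cases are fine: for $n=2$, orbifold Gauss--Bonnet gives $\chi(|\mathcal{O}|)\geq\chiorb(\mathcal{O})>0$; for $n=4$, Theorem \ref{theorem: synge for orbifolds} together with Poincar\'e duality for $H^*_{\mathrm{dR}}(\mathcal{O})\cong H^*(|\mathcal{O}|,\mathbb{R})$ gives $\chi(|\mathcal{O}|)=2+b_2>0$.) Note finally that the cited source obtains the result through the theory of Killing foliations and their deformations, essentially working on the orbit foliation of the frame bundle, so even a completed version of your argument would constitute a genuinely different route from the one referenced.
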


Recall that the growth $\#_S$ of a finitely generated group $G=\langle S \rangle$, for $S=\{g_1,\dots,g_k\}$, is the function that associates to $j\in\mathbb{N}$ the number of distinct elements of $G$ which one can express as words of length at most $j$, using the alphabet $\{g_1,\dots,g_k,g_1^{-1},\dots,g_k^{-1}\}$. We say that $G$ has \textit{polynomial growth} when $\#_S(j)\leq C(j^k+1)$, for some $C,k\leq\infty$, and that $G$ has \textit{exponential growth} when $\#_S(j)\geq \alpha^j$ for some $\alpha>1$. Those properties are independent of the chosen set of generators (see \cite[Lemma 1]{milnor}). In \cite{milnor}, Milnor shows that the curvature of a compact Riemannian manifold influences the growth rate of its fundamental group. For orbifolds we have the following analog.

\begin{theorem}[Milnor's theorem for orbifolds {\cite[Proposition 10]{borzellino5}} {\cite[Theorem 2.3]{caramello2}}]\label{theorem: Milnor growth for orbifolds}
Let $\mathcal{O}$ be a connected, compact Riemannian orbifold. Then
\begin{enumerate}[(i)]
\item \label{item milnor borzellino} if $\ric_{\mathcal{O}}\geq 0$, then $\piorb(\mathcal{O})$ has polynomial growth, and
\item \label{item milnor negative} if $\sec_{\mathcal{O}}<0$, then $\piorb(\mathcal{O})$ has exponential growth.
\end{enumerate}
\end{theorem}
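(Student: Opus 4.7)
The plan is to adapt Milnor's classical argument to the universal orbifold covering $\rho:\widehat{\mathcal{O}}\to\mathcal{O}$. First I would equip $\widehat{\mathcal{O}}$ with the lifted Riemannian metric; since $\rho$ is locally a Riemannian isometry, the curvature bounds transfer verbatim. By the properties of universal coverings, $\Gamma:=\mathrm{Aut}(\rho)\cong\piorb(\mathcal{O})$ acts isometrically and properly discontinuously on $\widehat{\mathcal{O}}$, with compact quotient $\widehat{\mathcal{O}}/\!/\Gamma\cong\mathcal{O}$. Fix a finite generating set $S=\{g_1,\dots,g_k\}$ of $\piorb(\mathcal{O})$, a base point $\hat{x}\in\widehat{\mathcal{O}}_{\mathrm{reg}}$ (possible since the regular stratum is open and dense), and set $L:=\max_i d(\hat{x},g_i\hat{x})$, so that $d(\hat{x},g\hat{x})\leq jL$ whenever $|g|_S\leq j$.

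For item (\ref{item milnor borzellino}), I would invoke the orbifold Bishop--Gromov volume comparison (valid under $\ric\geq 0$, as in the comparison results of \cite{borzellino3} underlying Cheng's theorem), giving $\vol(B_R(\hat{x}))\leq c R^n$ on $\widehat{\mathcal{O}}$. Since $\hat{x}$ is regular, its $\Gamma$-orbit is free, and by proper discontinuity I can pick $r>0$ small enough that the translates $B_r(g\hat{x})=gB_r(\hat{x})$ are pairwise disjoint of common volume $v_0>0$. Packing these inside $B_{jL+r}(\hat{x})$ yields
$$\#_S(j)\cdot v_0\;\leq\;\vol(B_{jL+r}(\hat{x}))\;\leq\;c(jL+r)^n,$$
so $\piorb(\mathcal{O})$ has polynomial growth of degree at most $n$.

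For item (\ref{item milnor negative}), the Cartan--Hadamard theorem for orbifolds (quoted above) implies that $\widehat{\mathcal{O}}$ is in fact a smooth manifold diffeomorphic to $\mathbb{R}^n$, so the situation reduces to an isometric, properly discontinuous, cocompact action of $\Gamma$ on a Cartan--Hadamard manifold; by compactness of $\mathcal{O}$, the strict inequality $\sec_{\mathcal{O}}<0$ upgrades to $\sec\leq-\kappa^2$ for some $\kappa>0$. I would then run Milnor's original argument: a precompact fundamental domain $D$ of diameter $d$ gives $\widehat{\mathcal{O}}=\bigsqcup_g gD$ up to measure zero, and comparing volumes of $B_R(\hat{x})$ against this tiling produces
$$\#\{g\in\Gamma : d(\hat{x},g\hat{x})\leq R+d\}\;\geq\;\frac{\vol(B_R(\hat{x}))}{\vol(D)}.$$
The Bishop--G\"unther lower volume comparison under $\sec\leq-\kappa^2$ yields $\vol(B_R(\hat{x}))\geq c'e^{(n-1)\kappa R}$ for $R$ large, while the \v{S}varc--Milnor lemma, applied to the cocompact isometric $\Gamma$-action on the proper geodesic space $|\widehat{\mathcal{O}}|$, furnishes $A,B>0$ with $|g|_S\leq A\,d(\hat{x},g\hat{x})+B$. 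Substituting converts the orbit-count lower bound into $\#_S(j)\geq Ce^{\beta j}$ for some $\beta>0$.

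The main technical point is ensuring that the orbifold Bishop--Gromov comparison is correctly applied on $\widehat{\mathcal{O}}$, whose singular locus is a closed set of measure zero but must be handled carefully when integrating the volume element; once that comparison is in place, the remainder closely parallels Milnor's original argument, with Cartan--Hadamard in part (\ref{item milnor negative}) conveniently sidestepping any orbifold-specific pathologies in the negatively curved case.
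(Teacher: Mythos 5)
Your proposal is correct and follows essentially the same route as the paper: for item (ii) the paper's sketch is exactly your argument (orbifold Cartan--Hadamard to get a Hadamard-manifold universal cover, the \v{S}varc--Milnor lemma, and G\"unther's inequality for the exponential lower volume bound), and for item (i) the paper simply cites Borzellino's Proposition 10, whose content is precisely the Bishop--Gromov packing argument on the universal cover that you reconstruct. The only points worth making explicit are that the deck group acts freely on the fiber over a regular base point (so the disjoint-ball packing is legitimate) and that the volume comparison used in (i) must be the orbifold version, both of which you have correctly flagged.
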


Concerning non-positively curved orbifolds, there is the following orbifold version of the Cartan--Hadamard theorem, which in \cite{bridson} is attributed to M.~Gromov.

\begin{theorem}[Cartan--Hadamard theorem for orbifolds {\cite[Corollary 2.16]{bridson}}]
Every connected, complete Riemannian orbifold $\mathcal{O}$ with $\sec_{\mathcal{O}}\leq0$ is good, hence its universal covering space is a Hadamard manifold.
\end{theorem}

There's also a generalization of Bochner's theorem on Killing vector fields.

\begin{theorem}[Bochner's theorem for orbifolds {\cite[Theorem 2.6]{caramello2}}]\label{teo: bochner for orbifolds}
Let $\mathcal{O}$ be a connected, compact, orientable Riemannian orbifold with $\ric_{\mathcal{O}}\leq0$. Then every Killing vector field on $\mathcal{O}$ is parallel, and $\dim\mathrm{Iso}(\mathcal{O})\leq \dim\mathcal{O}_{\mathrm{min}}$. Moreover, if $\ric_{\mathcal{O}}<0$, then $\mathrm{Iso}(\mathcal{O})$ is finite.
\end{theorem}

Now in non-negative curvature, Bochner's technique applied to $1$-forms yields that, for a manifold $M$, if $\ric_{M}\geq0$ then $b_1(M)\leq \dim M$, with equality holding if and only if $M$ is a flat torus. This was generalized for orbifolds by J.~Borzellino:

\begin{theorem}[Bochner's theorem for orbifolds {\cite[Theorem 4]{borzellino5}}]\label{teo: bochner for orbifolds for forms}
Let $\mathcal{O}$ be a connected, compact, orientable, $n$-dimensional Riemannian orbifold with $\ric_{\mathcal{O}}\geq0$. Then $b_1(\mathcal{O})\leq n$, with equality holding if and only if $\mathcal{O}$ is a flat torus.
\end{theorem}

Let $\mathcal{O}$ be a compact, oriented, $n$-dimensional Riemannian orbifold. Since $\Omega(\mathcal{O})\cong \Omega(\mathscr{H}_{\mathcal{O}})$, the Hodge star $\star:\Omega^i(\mathcal{O})\to \Omega^{n-1}(\mathcal{O})$ and the codiferential $\delta:\Omega^{i+1}(\mathcal{O})\to \Omega^i(\mathcal{O})$ are well defined, and one has $\proin{d\omega}{\eta}=\proin{\omega}{\delta\eta}$ with respect to the inner product
$$\proin{\omega}{\eta}=\int_{\mathcal{O}} \omega\wedge\star\eta.$$
Then one can consider the Laplacian $\Delta=d\delta+\delta d:\Omega^i(\mathcal{O})\to \Omega^i(\mathcal{O})$ on $\mathcal{O}$. The space of harmonic $i$-forms on $\mathcal{O}$ is denoted
$$\mathcal{H}^i(\mathcal{O})=\{\omega\in \Omega^i(\mathcal{O})\ |\ \Delta\omega=0\}.$$
There is the following Hodge decomposition theorem for orbifolds (which here we cite in less generality for simplicity).

\begin{theorem}[Hodge decomposition for orbifolds {\cite[Theorem 4.4]{farsi}}\footnote{Alternatively, one can apply \cite[Theorems 7.22 and 7.51]{tondeur} to the foliation of $\mathcal{O}^\Yup_+$ given by the orbits of $\mathrm{SO}(n)$, recall Proposition \ref{proposition: every orbifold is a quotient}.}]
Let $\mathcal{O}$ be a compact, oriented Riemannian orbifold. Then each $\Omega^i(\mathcal{O})$ splits orthogonally as
$$\Omega^i(\mathcal{O})\cong \mathrm{Im}(d)\oplus \mathrm{Im}(\delta) \oplus \mathcal{H}^i(\mathcal{O}).$$
In particular $H_{\mathrm{dR}}^i(\mathcal{O})\cong \mathcal{H}^i(\mathcal{O})$.
\end{theorem}

It is a classical result by Gromov that the sum of all Betti numbers of a negatively curved manifold is linearly bounded by its volume (see \cite[p. 12]{gromov2} or \cite[p. 115]{ballmann}). There is a generalization of this theorem to orbifolds due to I.~Samet:

\begin{theorem}[Gromov's theorem for orbifolds {\cite[Theorem 1.1]{samet}}]
Let $\mathcal{O}$ be a connected, complete, $n$-dimensional Riemannian orbifold satisfying $-1\leq\sec_\mathcal{O}<0$. Then
$$\sum_i b_i\leq C_n\vol(|\mathcal{O}|),$$
where $C_n$ is a constant depending only on $n$.
\end{theorem}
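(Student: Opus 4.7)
The plan is to adapt Gromov's original argument for negatively curved manifolds \cite{gromov2} to the orbifold setting, using the Cartan--Hadamard theorem for orbifolds and the De Rham theorem for orbifolds as the main bridges. First, by Theorem \ref{theorem: Satake} we have $b_i=\dim H^i_{\mathrm{dR}}(\mathcal{O})=\dim H^i(|\mathcal{O}|,\mathbb{R})$, so it suffices to bound the sum of the real singular Betti numbers of the underlying space. Second, by the Cartan--Hadamard theorem for orbifolds, $\mathcal{O}$ is good and we may write $\mathcal{O}\cong M/\!/\Gamma$, where $M$ is a Hadamard $n$-manifold with $-1\leq\sec_M<0$ and $\Gamma=\piorb(\mathcal{O})$ acts properly discontinuously by isometries, with $\vol(M/\Gamma)=\vol(|\mathcal{O}|)$.

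Next I would perform a thick--thin decomposition using the Margulis lemma. There is a universal constant $\varepsilon_n>0$ (depending only on the curvature pinching and dimension) such that for every $p\in M$ the subgroup of $\Gamma$ generated by $\{\gamma\in\Gamma\,|\,d(p,\gamma p)\leq\varepsilon_n\}$ is virtually nilpotent. This partitions $|\mathcal{O}|=\mathcal{O}_{\mathrm{thick}}\cup\mathcal{O}_{\mathrm{thin}}$, where the thin part consists of projections of Margulis tubes/cusps associated to virtually nilpotent almost-stabilizers.

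On the thick part, I would pick a maximal $\varepsilon_n/4$-separated subset and cover by balls of radius $\varepsilon_n/2$; a standard Bishop--Günther volume comparison, applied on $M$ to a fundamental domain, shows that the number of balls is at most $C_n^{(1)}\vol(|\mathcal{O}|)$, and each ball has bounded multiplicity of overlap (again by volume comparison). Choosing the radius small enough that each ball lifts to a $\Gamma_x$-invariant convex ball in $M$, the nerve of this cover computes the real cohomology of the thick part and has at most $C_n^{(2)}\vol(|\mathcal{O}|)$ simplices, yielding a bound on the Betti numbers of $\mathcal{O}_{\mathrm{thick}}$. For the thin part, the virtually nilpotent structure implies that each component is homotopy equivalent to the quotient of a disk bundle by a crystallographic group, whose Betti numbers are bounded by a dimensional constant; counting components via the volume of their collars again produces a linear bound in $\vol(|\mathcal{O}|)$. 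A Mayer--Vietoris argument along the thick--thin interface then combines these into the desired inequality $\sum_i b_i\leq C_n\vol(|\mathcal{O}|)$.

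The main obstacle is the interaction between the Margulis structure and the orbifold singularities. Singular strata may sit inside the thin part, and the local group $\Gamma_x$ interacts non-trivially with the virtually nilpotent Margulis stabilizer; one must establish an orbifold version of the Margulis lemma (taking the almost-stabilizers inside $\Gamma$ rather than inside a torsion-free deck group) and then classify the topology of the resulting thin components in the presence of torsion, which is the technical heart of Samet's proof in \cite{samet}. Once this orbifold Margulis framework is in place, the volume-counting and nerve arguments go through essentially as in the manifold case.
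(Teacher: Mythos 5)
The paper does not prove this theorem: it is stated as a quoted result with a pointer to Samet's article, so there is no in-paper argument to compare yours against. Judged on its own, your sketch correctly identifies the strategy that Samet actually follows (Gromov's thick--thin/nerve argument transported to the orbifold quotient $M/\!/\Gamma$), and the two in-paper reductions you invoke --- Theorem \ref{theorem: Satake} to replace $b_i$ by the real singular Betti numbers of $|\mathcal{O}|$, and the Cartan--Hadamard theorem to realize $\mathcal{O}$ as $M/\!/\piorb(\mathcal{O})$ with $M$ Hadamard --- are both available in the text and correctly applied.

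However, as a proof the proposal has a genuine gap, which you yourself flag: everything that distinguishes the orbifold case from Gromov's manifold case is deferred. Concretely, (a) the Margulis lemma must be proved for the full deck group $\Gamma$, which now contains torsion (elliptic elements fixing points of $M$), and the ``virtually nilpotent almost-stabilizer'' conclusion is exactly what has to be re-established; (b) the assertion that each thin component is homotopy equivalent to a quotient of a disk bundle by a crystallographic group, with uniformly bounded Betti numbers, is not a formality --- classifying the homotopy type of thin components when the almost-stabilizer has torsion is the technical core of Samet's paper, not a corollary of the manifold case; (c) you need a uniform lower bound on the volume of each thin component (and of each interface piece) to convert ``bounded Betti numbers per component'' into a bound linear in $\vol(|\mathcal{O}|)$, and this lower bound again depends on the torsion-aware Margulis lemma; and (d) the nerve argument on the thick part requires that the chosen balls in $|\mathcal{O}|$ and all their multiple intersections be contractible --- a ball around a singular point is a quotient of a convex ball by a finite isometry group, which is contractible, but an intersection of two such balls downstairs is not simply the quotient of an intersection of two convex lifts upstairs, since each ball may have several $\Gamma$-translates meeting a given fundamental domain; this bookkeeping must be done explicitly. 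As written, the proposal is an accurate roadmap of the known proof rather than a proof: the steps that ``go through essentially as in the manifold case'' are the easy ones, and the ones you outsource to \cite{samet} are the theorem.
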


We also mention here the following finiteness result under geometric constraints, by J.~Harvey.

\begin{theorem}[Geometric finiteness theorem for orbifolds {\cite[Theorem 1.1]{harvey}}]
Let $k,D,v\in\mathbb{R}$ be fixed. Then the class of all $n$-dimensional Riemannian orbifolds satisfying $\sec_{\mathcal{O}}\geq k$, $\diam(|\mathcal{O}|)\leq D$ and $\vol(|\mathcal{O}|)\geq v$ has only finitely many members, up to orbifold homeomorphisms.
\end{theorem}

Finally, we introduce the orbifold generalization of the Gauss--Bonnet theorem proved by I.~Satake. Let $\mathcal{O}$ be a $2k$-dimensional, orientable, Riemannian orbifold and consider $E_1,\dots,E_{2k}\in\mathfrak{X}(\mathcal{O})$ which restrict to an orthonormal frame on a chart $(\widetilde{U},H,\phi)$. Then we can write
$$R(X,Y)E_j=\sum_i\Omega_j^i(X,Y)E_i,$$
were $\Omega_j^i\in\Omega^2(\widetilde{U})^H$. We view $\Omega_U=(\Omega_j^i)$ as a $\mathfrak{so}(2k)$-valued $2$-form, called the \textit{curvature form} of $\mathcal{O}$ (on $U$). Recall that the \textit{Pfaffian} of a skew symmetric $2k\times 2k$ matrix $X=(x_j^i)$, given by
$$\mathrm{Pf}(X)=\frac{1}{2^kk!}\sum_{\sigma\in S_{k}}\mathrm{sgn}(\sigma)\prod_{i=1}^k x^{\sigma_{2i-1}}_{\sigma_{2i}},$$
satisfy $\det X=\mathrm{Pf}(X)^2$. Moreover, one has $\mathrm{Pf}(T^{-1}XT)=\det(T)\mathrm{Pf}(X)$. This property ensures that the local forms $\mathrm{Pf}(\Omega_U)$ behave well under changes of charts, since the transition functions of $T\mathcal{O}$ are $\mathrm{SO}(2k)$-valued. Hence we obtain a global form $\mathrm{Pf}(\Omega)\in\Omega^{2k}(\mathcal{O})$, called the \textit{Euler form} of $\mathcal{O}$.

\begin{theorem}[Gauss--Bonnet theorem for orbifolds {\cite[Theorem 2]{satake2}}]\label{teo: gauss bonnet}
Let $\mathcal{O}$ be a $(2k)$-dimensional, compact, oriented Riemannian orbifold. Then
$$\int_{\mathcal{O}}\mathrm{Pf}(\Omega)=(2\pi)^{k}\chiorb(\mathcal{O}).$$
\end{theorem}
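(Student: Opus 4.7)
The plan is to adapt Chern's transgression proof of the classical generalized Gauss--Bonnet theorem to orbifolds, combining the orbifold versions of Stokes' theorem and the Poincaré--Hopf theorem stated earlier. The three main ingredients will be: (i) represent $\mathrm{Pf}(\Omega)$ as an exact form away from the zeros of a generic vector field, (ii) apply Stokes on the complement of small orbiballs around those zeros, and (iii) show that each boundary term contributes $-(2\pi)^{k}\ind_{x_i}(X)$.

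For step (i), I would first choose a vector field $X\in\mathfrak{X}(\mathcal{O})$ with only isolated zeros $x_{1},\dots,x_{m}$, which by a small perturbation may be assumed to lie in the regular stratum $\mathcal{O}_{\mathrm{reg}}$. On $\mathcal{O}':=\mathcal{O}\setminus\{x_{1},\dots,x_{m}\}$ the normalized field $\widehat{X}:=X/\|X\|$ is a section of the unit tangent orbibundle. Chern's classical construction, applied on each chart $(\widetilde{U},H,\phi)$, yields an $H$-invariant $(2k-1)$-form $\widetilde{\Psi}$ on $\widetilde{U}\cap\phi^{-1}(\mathcal{O}')$ with $d\widetilde{\Psi}=\mathrm{Pf}(\widetilde{\Omega})$, built from the Levi-Civita connection and $\widehat{X}$. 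Since both ingredients are $\mathscr{H}_{\mathcal{O}}$-invariant and the Chern formula is natural with respect to isometric embeddings, these local forms agree under chart embeddings and glue to a global $\Psi\in\Omega^{2k-1}(\mathcal{O}')$ with $d\Psi=\mathrm{Pf}(\Omega)|_{\mathcal{O}'}$.

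For steps (ii) and (iii), I would apply the orbifold Stokes' theorem to $\mathcal{O}\setminus\bigsqcup_i B_\varepsilon(x_i)$ and pass to the limit $\varepsilon\to 0$; since $\mathrm{Pf}(\Omega)$ is smooth across the $x_i$, the left-hand side converges to $\int_\mathcal{O}\mathrm{Pf}(\Omega)$, giving
$$\int_\mathcal{O}\mathrm{Pf}(\Omega)=-\lim_{\varepsilon\to 0}\sum_{i=1}^{m}\int_{\partial B_\varepsilon(x_i)}\Psi.$$
Working in a (regular) chart around $x_i$, the boundary integral reduces to the classical manifold computation, which by Chern's local formula has limit $-(2\pi)^{k}\ind_{x_i}(X)$. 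Summing over $i$ and invoking the orbifold Poincaré--Hopf theorem, $\sum_i\ind_{x_i}(X)=\chiorb(\mathcal{O})$, yields the claim. If one prefers to allow $x_i\in\mathcal{O}_{\mathrm{sing}}$, the same argument works: lifting $X$ to a $\Gamma_{x_i}$-invariant field $\widetilde{X}$ on a chart produces the upstairs limit $-(2\pi)^{k}\ind_{\tilde{x}_i}(\widetilde{X})$, and the factor $1/|\Gamma_{x_i}|$ from orbifold integration reproduces exactly the definition $\ind_{x_i}(X)=\ind_{\tilde{x}_i}(\widetilde{X})/|\Gamma_{x_i}|$.

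The main obstacle I anticipate is the well-definedness of $\Psi$ as a global form on $\mathcal{O}'$: one must verify that Chern's local transgression forms are compatible under every chart embedding, which amounts to checking naturality of the Chern--Simons construction in the orbifold (pseudogroup) category. Once this coherence is established, the rest of the argument is a direct translation of the manifold proof, with the orbifold Poincaré--Hopf theorem supplying the crucial combinatorial identity at the end and the $1/|H|$ normalization of orbifold integration absorbing the only genuinely new bookkeeping.
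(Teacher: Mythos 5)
The paper offers no proof of this theorem, deferring entirely to Satake; your transgression argument is in substance the same as Satake's original one (the Chern proof adapted to $V$-manifolds), so the overall strategy is the right one: transgress $\mathrm{Pf}(\Omega)$ off the zero set of a vector field, apply the orbifold Stokes' theorem on the complement of small balls, identify each boundary contribution with $(2\pi)^{k}\ind_{x_i}(X)$, and finish with the orbifold Poincar\'e--Hopf theorem. Your naturality argument for gluing the local Chern--Simons forms is also the correct way to see that $\Psi$ is a well-defined global form on $\mathcal{O}'$: it is a universal expression in the metric, the connection, the curvature and $\widehat{X}$, all of which are $\mathscr{H}_{\mathcal{O}}$-invariant.

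One concrete correction is needed. The claim that a small perturbation moves the zeros of $X$ into $\mathcal{O}_{\mathrm{reg}}$ is false in general: in a chart around $x\in\mathcal{O}_{\mathrm{sing}}$ the field lifts to a $\Gamma_x$-invariant field $\widetilde{X}$, so $\widetilde{X}(\tilde{x})$ must lie in the fixed subspace $(T_{\tilde{x}}\widetilde{U})^{\Gamma_x}$. Whenever that subspace is zero --- as at the cone points of the pillow case of Example \ref{example: pillow}, where $\Gamma_x=\mathbb{Z}_2$ acts by $-\mathrm{id}$, or at the poles of a football --- \emph{every} vector field on $\mathcal{O}$ vanishes at $x$, and no perturbation removes the zero. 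Hence the case $x_i\in\mathcal{O}_{\mathrm{sing}}$, which you present as an optional variant at the end of step (iii), is in fact the essential case and must carry the main line of the argument. Fortunately your treatment of it is correct: the upstairs boundary integral over $\partial\widetilde{B}_\varepsilon$ tends to $(2\pi)^{k}\ind_{\tilde{x}_i}(\widetilde{X})$ by the classical local computation, and the factor $1/|\Gamma_{x_i}|$ from the definition of orbifold integration converts this into $(2\pi)^{k}\ind_{x_i}(X)$, exactly matching the definition of the index in Section \ref{section: tangent bundle}. With the argument rerouted through that case (and the existence of a vector field with isolated zeros taken as given, as the Poincar\'e--Hopf statement already presumes), the proof is complete.
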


Moreover, as in the manifold case, for an odd dimensional compact Riemannian orbifold one has $\chiorb(\mathcal{O})=0$ \cite[Theorem 4]{satake2}.

\end{document}